\title{Estimating Max-Stable Random Vectors with Discrete Spectral Measure using Model-Based Clustering}
\author{Alexis Boulin}
\affiliation{Ruhr-Universität Bochum, Fakultät für Mathematik. Email: alexis.boulin@ruhr-uni-bochum.de}
\keywords{Extremes, High dimensional estimation, Latent model, Soft clustering, Variable clustering.}
\begin{document}

\begin{abstract}
    This study introduces a novel estimation method for the entries and structure of a matrix $A$ in the linear factor model $\mathbf{X} = A\textbf{Z} + \textbf{E}$. This is applied to an observable vector $\mathbf{X} \in \mathbb{R}^d$ with $\textbf{Z} \in \mathbb{R}^K$, a vector composed of independently regularly varying random variables, and lighter tail noise $\textbf{E} \in \mathbb{R}^d$. The spectral measure of the regularly varying random vector $\mathbf{X}$ is subsequently discrete and completely characterised by the matrix $A$. It follows that the behaviour of its maxima can be modelled by a max-stable random vector with discrete spectral measure. Every max-stable random vector with discrete spectral measure can be written as a linear factor model. Each row of the matrix $A$ is supposed to be both scaled and sparse. Additionally, the value of $K$ is not known a priori. The problem of identifying the matrix $A$ from its matrix of pairwise extremal correlation is addressed. In the presence of pure variables, which are elements of $\mathbf{X}$ linked, through $A$, to a single latent factor, the matrix $A$ can be reconstructed from the extremal correlation matrix. Our proofs of identifiability are constructive and pave the way for our innovative estimation for determining the number of factors $K$ and the matrix $A$ from $n$ weakly dependent observations on $\mathbf{X}$. We apply the suggested method to weekly maxima rainfall and wildfires to illustrate its applicability.
\end{abstract}

\section{Introduction}

In this current study, our aim is to estimate the $d \times K$ loading matrix A, which might exhibit sparsity, and serves as the parameter for the decomposition of an observable random vector $\mathbf{X}$. This can be expressed as

\begin{equation}
    \label{eq:lfm}
    \mathbf{X} = A \textbf{Z} + \textbf{E}.
\end{equation}
In this equation, $\textbf{Z}$ represents an unobservable, $K$-dimensional random vector, serving as an underlying latent factor, $E \in \mathbb{R}^d$ as a unobservable random noise. The precise count of factors, $K$, remains undisclosed and both $d$ and $K$ are permitted to increase and be larger than $n$, the number of observations. To establish the foundation of our framework inside extreme value theory, we assume that $\textbf{Z}$ comprises of asymptotic independent random variables characterised by a tail index $\alpha$, for the purposes of our study, we will set this tail index to a fixed value of $\alpha$ equal to unity. As per the construction, the vector $\textbf{Z}$ is regularly varying with the subsequent exponent measure
\begin{equation*}
    \Lambda_\textbf{Z} = \sum_{k=1}^K \delta_0 \otimes \dots \otimes \Lambda_{Z^{(k)}} \otimes \dots \otimes \delta_0, \quad \Lambda_{Z^{(k)}}(dy) = y^{-2} dy.
\end{equation*}
    The random noise vector $E \in \mathbb{R}^d$ is postulated to possess a distribution with a tail that is lighter than that of the associated factors. Hence, $\mathbf{X}$ is also regularly varying which can be equivalently described by the existence of an angular measure $\Phi$ where the following weak convergence holds true on the positive unit sphere for an arbitrary norm $||\cdot||$ on $\mathbb{R}^d$,
\begin{equation*}
    \underset{x \rightarrow \infty}{\lim} \mathbb{P}\left\{ \frac{\mathbf{X}}{\lVert \mathbf{X}\rVert} \in \cdot \, |\, \lVert \mathbf{X} \rVert > x \right\} = \Phi(\cdot),
\end{equation*}
where $\Phi$ has the discrete representation
\begin{equation}
    \label{eq:X_ang_meas}
    \Phi(\cdot) = \sum_{k=1}^K \lVert A_{\cdot k} \rVert \delta_{\frac{A_{\cdot k}}{\lVert A_{\cdot k} \rVert}}(\cdot),
\end{equation}
with $\delta_x(\cdot)$ the Dirac measure that puts unit mass at $x$ and $A_{\cdot k}$ is the $k$th column of the matrix $A$. Taking $\mathbf{X}_1, \dots, \mathbf{X}_m$, $m$ i.i.d. replications of $\mathbf{X}$ in \eqref{eq:lfm}, it follows that, see for example \cite[Theorem 2.1.6]{kulik2020heavy} for details,
\begin{equation*}
	\underset{m \rightarrow \infty}{\lim} \, \mathbb{P}\left\{ \bigvee_{i=1}^m c_m^{-1} X_i^{(j)} \leq x^{(j)}, j=1,\dots,d \right\} = e^{-\sum_{a=1}^K \bigvee_{j=1}^d \frac{A_{ja}}{x^{(j)}}}, \quad \mathbf{X} \geq 0,
\end{equation*}
where $c_m$ is a scaling sequence. The limiting distribution on the right is max-stable, which means there exist $\mathbf{a}_m > 0$ and $\mathbf{b}_m$ such that $H^m(\mathbf{a}_m \mathbf{x} + \mathbf{b}_m) = H(\mathbf{x})$ for any $m \in \mathbb{N}$ with $H$ a cumulative distribution function. Furthermore, under our model in \eqref{eq:lfm} it has the property of having a discrete angular measure. Throughout this paper, we will refer to a random vector with a max-stable distribution as a max-stable random vector.

The expression \eqref{eq:lfm} can also be considered as a linear adaptation of the max-linear models, sharing the same angular measure $\Phi$. This essentially follows from the fact that the ratio of the probabilities of the sum and the maximum of the $A_{ij}Z^{(j)}$ exceeding $x$ tends to $1$ as $x \rightarrow \infty$ (see \cite[page 38]{embrechts2013modelling} or \cite[Example 2.2.8, Example 2.2.9]{kulik2020heavy}). Each max-stable distribution with a discrete spectral measure is inherently max-linear, see \cite[Section 3.1]{fougeres2013dense}. The max-linear model, in turn, is dense in the class of $d$-dimensional multivariate extreme value distribution (\cite{fougeres2013dense}). Consequently, any multivariate max-stable vector can be finely approximated through a max-linear model provided that $K$ is large. Additionally, \cite{cooley2019decompositions} established the existence of a finite natural number, denoted as $q \in \mathbb{N}$, such that the tail pairwise dependence matrix $\Sigma_\mathbf{X}$ for any multivariate regularly varying random vector $\mathbf{X}$ with a tail index $\alpha = 2$ is equivalent to that of a max-linear model with $q$ factors. This equivalence is expressed through the relationship $\Sigma_\mathbf{X} = A A^\top$.

Corresponding max-linear models, have also been explored in the field of time series for extremes \cite{davis1989basic, hall2002moving}. More recently, they have found applications in the domain of structural equation models \cite{gissibl2018max, kluppelberg2019bayesian}, as well as in the context of clustering extremes \cite{janseen2020clustering, medina2021spectral, avella2022kernel}. Factor models of this kind find widespread use across diverse applications. For instance, they are often employed to represent underlying factors that influence financial returns (\cite{cui2018max}) as well in environmental sciences (\cite{kiriliouk2022estimating}).

\paragraph{Outline of the literature.} Estimating parameters in linear factor models poses a difficult task, primarily because there is no spectral density that rules out standard maximum likelihood procedures. Instead, \cite{einmahl2012mestimator} and \cite{einmahl2018continuous} opt for a least square estimator based on the stable tail dependence function to tackle this task. \cite{janseen2020clustering, medina2021spectral} propose spectral clustering algorithms designed for extremes employing its output to estimate $A_{\cdot 1} / ||A_{\cdot 1}||, \dots, A_{\cdot K} / || A_{\cdot K}||$ and $||A_{\cdot 1}||/w,\dots,||A_{\cdot K}||/w$. These parameters characterise the angular measure of the linear factor model. However, this approach falls short in estimating the matrix $A$ which can be crucial for practical interpretation and computing failure sets (see Section \ref{sec:extreme_precip}). Additionally, these methods face limitations in higher dimensions, grappling running time difficulties or curse of dimension. Moreover these methods also assume that the number $K$ is known a priori, a requirement that is often scarcely fulfilled  in practical scenario. Addressing this hurdle, additional methods, as proposed by \cite{medina2021spectral, avella2022kernel}, introduce a procedure coupled with the so-called screeplot to aid in the selection of the elusive number $K$. Despite the practical utility of such an approach, the theoretical underpinnings supporting these findings are still in their early stage of development. To our current understanding, methods for estimating $A$ in higher dimensions have emerged specifically under the condition of a squared matrix $A \in \mathbb{R}^{d \times d}$. Notably, these methods have found fruitful application in contexts characterised by moderate dimensions. For instance, in Diricted Acyclic Graph, \cite{kluppelberg2021estimating} have made noteworthy contributions, while \cite{kiriliouk2022estimating} have demonstrated successful applications of their estimator in environmental and financial dataset. However, it is crucial to acknowledge that these achievements are contingent upon the specific conditions and dimensions involved. A noteworthy recent paper worth emphasising is \cite{zhang2023wasserstein}. The paper investigates minimax risk bounds for estimators of the spectral measure in multivariate linear factor models, particularly when the number of latent factors $K$ exceeds the dimension $d$ and the latter is fixed. Foremost, a critical lens on the theoretical foundations reveals a reliance on a i.i.d. sample and the asymptotic framework in the mentioned literature. The assumption of serial independence may face scrutiny when these methods are extended to environmental datasets, where deviations from serial independence are legitimately suspected. Moreover, the asymptotic framework, with a fixed arbitrary dimension $d$ while the sample size $n \rightarrow \infty$, may offer limited insights into the performance of estimation processes in high-dimensional setting, i.e., $d$ vary with $n$ and might even surpass the sample size.

\paragraph{Our contribution.} Drawing inspiration of \cite{bing2020adaptative}, we propose a model-based clustering via $A$ with the crucial distinction that the covariance matrix of $\mathbf{X}$ does not exist in our model. Within the framework of model \eqref{eq:lfm}, we consider two components, namely $X^{(i)}$ and $X^{(j)}$ belonging to the vector $\mathbf{X}$, as akin if they share a non-zero association. This association is established through the intermediary of the matrix $A$, connecting them to a common latent factor $Z^{(a)}$. Variables exhibiting this similarity are grouped together within the cluster denoted as $G_a$:
\begin{equation}
    \label{eq:pure_clusters}
    G_a = \{ j \in \{1,\dots,d\} \, : A_{ja} \neq 0 \}, \quad \textrm{for each } a \in \{1,\dots,K\}.
\end{equation}
Given that $X^{(j)}$ can potentially be linked to multiple latent factors, the resulting clusters are characterised by overlap. In terms of terminology, groups that may become large without the others are called extreme directions. More precisely, if $J \subset \{1,\dots,d\}$ is such that the components $(X^{(j)})_{j \in J}$ can be large simultaneously while the other components $(X^{(j)})_{j \in [d] \setminus J}$ are small, then $J$ defines an extreme direction (see \cite{simpson2020determining} for a precise definition). By examining \cite[Example 3.7]{mourahib2023multivariate}, one can observe that soft clusters in Equation \eqref{eq:pure_clusters} also represent the extreme directions.

In this endeavor, our focus centers on presenting a model-based clustering approach through the utilisation of A. Specifically, we contemplate a variant of model \eqref{eq:lfm} where in every rows of $A$ undergoes scaling. To be specific, we posit the following assumption:
\begin{Assumption}{(i)}
	\label{cond:(i)}
    $\sum_{a=1}^K A_{ja} = 1.$
\end{Assumption}
The weights $A_{i1},\dots,A_{iK}$ indicate the degree to which component align with each cluster. This condition dives our model into both hard and soft clustering. Additionally, to employ the model effectively for clustering purposes, it is important to circumvent the trivial scenario where each component $X^{(j)}$ is associated with all latent factors. To adress this concern, we permit the row of $A_{j\cdot} = (A_{j1},\dots,A_{jK})$ to exhibit sparsity for $j$ in the range of $\{1,\dots,d\}$. However, it is noteworthy that this property is not required for establishing the identifiability of $A$.

Condition \ref{cond:(i)}, if not explicitly specified, fails to guarantee the identifiability of $A$ in model \eqref{eq:lfm} solely based on bivariate measures on the vector $\mathbf{X}$. We term the following condition, denoted as \ref{cond:(ii)}, the ``pure variable assumption''. In simple terms, this assumption posits the presence of at least one pure variables $X^{(j)}$, among the components of $\mathbf{X}$. These pure variables are uniquely associated with a single latent factor and no other.
\begin{Assumption}{(ii)}
	\label{cond:(ii)}
    For every $a \in \{1,\dots,K\}$, there exists at least one indice $j \in \{1,\dots,d\}$ such that $A_{ja} = 1$ and $A_{jb} = 0, \, \forall b \neq a$.
\end{Assumption}

Cluster denoted as $G_a$, established in accordance with Equation \eqref{eq:pure_clusters}, derive their definition from the unobservable factor $Z^{(a)}$. In this context, a pure variable $X^{(j)}$ serves as an observable representation of $Z^{(a)}$, contributing to the elucidation of the ambiguous nature of cluster $G_a$. Moreover, a more stringent version of Condition \ref{cond:(ii)} has a rich history, specifically
\begin{Assumption}{(ii')}
	\label{cond:(ii')}
    For every $a \in \{1,\dots,K\}$, there exist at least one known indice $j \in \{1,\dots,d\}$ such that $A_{ja} = 1$ and $A_{jb} = 0, \, \forall b \neq a$.
\end{Assumption} 
This condition stands out as one of the few interpretable parametrisations of $A$, effectively eliminating the ambiguity associated with latent factors. In psychology, the variables generated purely through the parametrisation as referred to as factorial simple items (\cite{mcdonald2013test}). A comparable condition find its roots in the topic modeling literature, where the identifiability of topics is assured under the assumption that anchor words exists, i.e., words that exclusively appear in one topic. In hydrology, the concept of pure variables was utilised to pinpoint catchments stations as representatives of pollution sources in \cite[page 700]{tolosana2005latent}.
In section \ref{sec:identifiability}, we demonstrate that, under Conditions \ref{cond:(i)} and \ref{cond:(ii)}, the matrix $A$ can be recovered solely through the use of bivariate measures, namely extremal correlations. In Section \ref{sec:estimation_chap_4}, we develop \ref{alg:clust}, a new soft clustering algorithm specifically for linear factor model that estimate the loading matrix $A$ and the overlapping groups. We provide a sparse estimator $\hat{A}$ of $A$ that is tailored to our model specifications. Our approach follows the constructive techniques used in our identifiability proofs. We first construct an estimator $\hat{I}$, an estimator of the pure variable set $I$, and $\hat{K}$, an estimator of the number of clusters $K$. These are used to estimate the rows in A corresponding to pure variables. The remaining rows of A are estimated via an easily implementable program that is tailored to this problem. We base our theoretical study on mixing conditions over the studied process. These conditions make explicit the independence between ``past'' and ``future''; meaning that the ``past'' is progressively forgotten. Mixing conditions are consequently more adapted to work in areas like finance or climate sciences where history is of considerable importance. To make this more precise, we consider processes with exponentially decaying strong mixing coefficients, as introduced in Section \ref{sec:stat_guarantees}. The algorithm \ref{alg:clust} recovers the number of latent variables with high probability under a strong signal condition in Section \ref{subsec:stat_guarantees_k_i}. We establish an upper bound on the $L_2$ norm ($L_2(\hat{A},A)$, as defined in Section \ref{subsec:stat_guarantees_A}) for the matrix $A$ specified by model \eqref{eq:lfm} and subject to Conditions \ref{cond:(i)}-\ref{cond:(ii)}, as discussed in Section \ref{subsec:stat_guarantees_A}. A control of cluster or, equivalently stated, extreme direction recovery is also given in Section \ref{subsec:stat_guarantees_A}.

\paragraph{Notations} All bold letters $\mathbf{X}$ correspond to vector in $\mathbb{R}^d$. The notation $\delta_x$ corresponds to the Dirac measure at $x$. Throughout this paper, we are concerned with a simple undirected graph $G = (V,E)$ with a finite set $V$ of vertices and a finite set $E$ of unordered pairs $(v,w)$ of distinct vertices, called edges, we denote by $\bar{E}$ its complementary adjacent matrix. A pair of vertices $v$ and $w$ are said to be adjacent if $(v,w) \in E$. For the subset $W \subseteq V$ of vertices, $G(W) = (W, E(W))$ with $E(W) = \{(v,w) \in W \times W \, | (v,w) \in E \}$ is called a subgraph of $G = (V,E)$ induces by $W$. Given the subset $Q \subseteq V$ of vertices, the induced subgraph $G(Q)$ is said to be complete if $(v,w) \in E$ for all $v,w \in Q$ with $v \neq w$. In this case, we may simply state that $Q$ is complete subgraph. A complete subgraph is also called a clique. A clique is maximum if its cardinality is the largest among all the cliques of the graph.

\section{Identifiability}
\label{sec:identifiability}

Within this section, we present a demonstration that the allocation matrix $A$, as defined by model \eqref{eq:lfm} and subject to conditions \ref{cond:(i)}-\ref{cond:(ii)}, is identifiable, within the exception of multiplication by a permutation matrix.

As per the construction, the vector $\textbf{Z}$ is regularly varying, it possesses an extremal correlation matrix represented by $I_K$, the identity matrix. Consequently, we deduce that the vector $\mathbf{X}$ also follows a pattern of regular variation, leading to the presence of an extremal correlation matrix denoted as $\mathcal{X} = [ \chi(i,j) ]_{i=1,\dots,d;j=1,\dots,d}$, where
\begin{equation*}
    \chi(i,j) = \underset{x \rightarrow \infty}{\lim} \frac{\mathbb{P}\{ X^{(i)} > x, X^{(j)}>x \}}{\mathbb{P}\{X^{(j)} >x \}}.
\end{equation*}
The subsequent theorem is poised to demonstrate that the extremal correlation matrix can be elegantly formulated using exclusively the loading matrix $A$. However, before going further, we introduce a novel matrix operation defined over matrices $A \in \mathcal{M}_{p,K}(\mathbb{R})$ and $B \in \mathcal{M}_{K,q}(\mathbb{R})$. Here, the notation $\mathcal{M}_{p,q}(\mathbb{R})$ refers to the collection of matrices encompassing $p$ rows and $q$ columns, with coefficients in the real number domain.
\begin{definition}
    We call $\odot$ the application:
    \begin{align*}
    \odot\colon&\mathcal{M}_{p,K}(\mathbb{R}) \times \mathcal{M}_{K,q}(\mathbb{R})\longrightarrow\mathcal{M}_{p,q}(\mathbb{R})\\
    &\phantom{++++++}(a_{ik}, b_{mj})\mapsto c_{ij},
     \end{align*}
     where
     \begin{equation*}
         c_{ij} = a_{i1} \wedge b_{1j} + \dots + c_{iK} \wedge b_{Kj}.
     \end{equation*}
\end{definition}
With all the essential tools at our disposal, we are ready to present the ensuing fundamental theorem.
\begin{theorem}
    \label{thm:matrix_product}
    Let $\mathbf{X}$ defined in \eqref{eq:lfm} and $A$ satisfies Condition \ref{cond:(i)}. Then $\mathbf{X}$ is regularly varying and its extremal correlation $\mathcal{X}$ can be written as
    \begin{equation*}
        \mathcal{X} = A \odot A^\top,
    \end{equation*}
    with
    \begin{equation*}
        \chi(i,j) = \sum_{k=1}^K A_{ik} \wedge A_{jk}.
    \end{equation*}
\end{theorem}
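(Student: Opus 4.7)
The proof splits naturally into two parts: (a) regular variation of $\mathbf{X}$, and (b) the explicit bivariate formula $\chi(i,j)=\sum_k A_{ik}\wedge A_{jk}$. Part (a) is essentially book-keeping: since the components of $\mathbf{Z}$ are independent and regularly varying with index $1$ and exponent measures $\Lambda_{Z^{(k)}}(dy)=y^{-2}dy$, and $\mathbf{E}$ is strictly lighter-tailed, $\mathbf{X}=A\mathbf{Z}+\mathbf{E}$ is regularly varying as a linear image of $\mathbf{Z}$ up to negligible perturbation (a standard max-sum equivalence argument; cf.\ \cite[Example 2.2.8--2.2.9]{kulik2020heavy}). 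The corresponding exponent measure $\Lambda_{\mathbf{X}}$ is the push-forward of $\Lambda_{\mathbf{Z}}$ by $\mathbf{z}\mapsto A\mathbf{z}$, and a direct computation yields the discrete angular measure in \eqref{eq:X_ang_meas} with atoms at $A_{\cdot k}/\|A_{\cdot k}\|$ of mass $\|A_{\cdot k}\|$.

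For part (b), the plan is to pass through the exponent function
\begin{equation*}
V(\mathbf{x})=\Lambda_{\mathbf{X}}\!\left(\{\mathbf{y}\ge 0:\exists j,\ y_j>x_j\}\right)=\sum_{k=1}^{K}\bigvee_{j=1}^{d}\frac{A_{jk}}{x_j},
\end{equation*}
which is the form already displayed in the max-stable limit of the introduction. Restricting to the pair $(i,j)$ and evaluating at $x_i=x_j=1$ gives the bivariate exponent $V_{ij}(1,1)=\sum_k A_{ik}\vee A_{jk}$. Condition \ref{cond:(i)} then immediately forces unit Fréchet margins, since $V_i(1)=\sum_k A_{ik}=1$, which is exactly what is needed for the normalisation in the definition of $\chi(i,j)$.

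The final step is to invoke the standard identity relating extremal correlation to the bivariate exponent,
\begin{equation*}
\chi(i,j)=2-V_{ij}(1,1),
\end{equation*}
valid because both marginals are unit Fréchet. Then the elementary identity $a\vee b=a+b-(a\wedge b)$ applied entrywise together with Condition \ref{cond:(i)} yields
\begin{equation*}
V_{ij}(1,1)=\sum_{k}(A_{ik}+A_{jk})-\sum_k A_{ik}\wedge A_{jk}=2-\sum_k A_{ik}\wedge A_{jk},
\end{equation*}
and substitution gives $\chi(i,j)=\sum_k A_{ik}\wedge A_{jk}$, which is precisely the matrix product $A\odot A^{\top}$.

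The main obstacle I anticipate is part (a): one must be careful that the noise term $\mathbf{E}$ truly does not perturb the joint tail behaviour, so that $\mathbb{P}(X^{(i)}>x,X^{(j)}>x)$ is asymptotically equivalent to $\mathbb{P}(\sum_k A_{ik}Z^{(k)}>x,\sum_k A_{jk}Z^{(k)}>x)$. Once this is justified via regular-variation closure under asymptotically negligible additive perturbations, the rest of the argument is the clean algebraic manipulation above; in particular the $\odot$ formulation is just a compact notation for the identity $\chi(i,j)=\sum_k A_{ik}\wedge A_{jk}$.
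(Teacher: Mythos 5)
Your proposal is correct, and it rests on the same foundation as the paper's proof: the exponent measure of $\mathbf{X}$ is the push-forward of $\Lambda_{\mathbf{Z}}$ under $\mathbf{z}\mapsto A\mathbf{z}$, with the noise $\mathbf{E}$ discarded because of its lighter tail (the paper isolates exactly this step as Lemma \ref{lem:tail_balance_condition} and then works with $Y^{(i)}=\sum_a A_{ia}Z^{(a)}$). Where you diverge is the endgame. The paper never passes through the exponent function: it computes the joint exceedance mass directly,
\begin{equation*}
\Lambda_{(Y^{(i)},Y^{(j)})}\bigl((1,\infty)\times(1,\infty)\bigr)=\sum_{k=1}^K\int_0^\infty \mathds{1}_{\{A_{ik}s>1,\,A_{jk}s>1\}}\,s^{-2}\,ds=\sum_{k=1}^K A_{ik}\wedge A_{jk},
\end{equation*}
and divides by the marginal mass $\sum_k A_{ik}=1$, the minimum appearing directly from $(1/A_{ik}\vee 1/A_{jk})^{-1}$. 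You instead restrict the $d$-variate exponent function to the pair $(i,j)$, invoke $\chi(i,j)=2-V_{ij}(1,1)$, and convert $\vee$ into $\wedge$ via $a\vee b=a+b-a\wedge b$ together with Condition \ref{cond:(i)}. Both routes are valid and of comparable length; the paper's is marginally more self-contained, while yours leans on the standard identity $\chi=2-V(1,1)$. One small caution about that identity: in this paper $\chi(i,j)$ is defined with raw thresholds $x$ rather than marginal quantiles, so $\chi=2-V_{ij}(1,1)$ is legitimate not because the margins are literally unit Fréchet (they are only regularly varying), but because Condition \ref{cond:(i)} makes every marginal tail asymptotically $x^{-1}$ with the same scale, i.e.\ the margins are tail-equivalent with scale one, which is exactly the normalisation the ratio defining $\chi(i,j)$ requires. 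You do flag this, so nothing essential is missing; it is just more accurate to phrase it as tail equivalence than as ``unit Fréchet margins''.
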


For any loading matrix $A$ that adheres to model \eqref{eq:lfm}, we can subdivide the set $[d] = \{1,\dots,d\}$ into two distinct non-overlapping segments: $I$ and its complement, designated as $J$. Within each row $A_{i\cdot}$ of $A_I$, there exists precisely at least one value $a \in [K]$ for which $A_{ia} = 1$. We assign the term ``pure variable set'' to $I$, while $J$ corresponds to the ``non-pure variable set''. To be more specific, for any given matrix $A$, the pure variable set is outlined as follows
\begin{equation}
    \label{eq:pure_set}
    I(A) = \cup_{a=1}^K I_a, \quad I_a := \{ i \in [d] : A_{ia} = 1, A_{ib} = 0 \; \forall b \neq a \}.
\end{equation}
In Equation \eqref{eq:pure_set}, we use the notation $I(A)$ to underscore that the pure variables set finds its definition in relation to matrix $A$. Moving forward, we will omit this explicit statement whenever there is no ambiguity. Additionally, it is worth mentioning that the sets $\mathcal{I} := \{I_a\}_{1 \leq a \leq K}$ constitute a partition of the pure variable set $I$.

To establish the identifiability of matrix $A$, our task is simplified by focusing on distinct identifiability of $A_I$ and $A_J$, each with allowance for a transformation by a permutation matrix. With respect to the definition of $A_I$, its identifiability is assured as long as the partition of the pure variable set $I$ remains identifiable. The heart of the challenge lies in the identifiability of set $I$ and the inherent issue of distinguishing between $I$ and $J$, based solely on the distribution of the vector $\mathbf{X}$. This stands as the central hurdle of the problem. Theorem \ref{thm:identifiability_I} holds a central position in our discussion. In the first part \ref{item_i:identifiability_I}, it offers both a necessary and sufficient description of the set $[K]$ by examining the extremal correlation matrix $\mathcal{X}$. In the second part \ref{item_ii:identifiability_I}, it provides a necessary and sufficient characterisation of the set $I$ when the cardinality of $I_a$ is greater than one. Finally, in the third part \ref{item_iii:identifiability_I}, it illustrates that both the set $I$ and its partition into subsets $\mathcal{I} = \{I_a\}_{1 \leq a \leq K}$ can be successfully identified. Let
\begin{equation}
    \label{eq:M_i}
    M_i = \underset{j \in [d] \setminus \{i\}}{\max}\chi(i,j)
\end{equation}
denote the greatest value among the entries of row $i$ of matrix $\mathcal{X}$ excluding $\chi(i,i) = 1$. Additionally, let $S_i$ represent the index set for which $M_i$ reaches its maximum
\begin{equation}
    \label{eq:S_i}
    S_i = \{ j \in [d] \setminus \{i \}, \; \chi(i,j) = M_i \}.
\end{equation}

\begin{theorem}
    \label{thm:identifiability_I}
    Assume that model \eqref{eq:lfm} and conditions \ref{cond:(i)}-\ref{cond:(ii)} hold. Then:
    \begin{enumerate}[label=\textcolor{frenchblue}{\bf(\alph*)}]
        \item \label{item_i:identifiability_I} The set $[K]$ is any maximum clique of the undirected graph $G = (V,E)$ where $V = [d]$ and $(i,j) \in E$ if $\chi(i,j) = 0$. 
        \item \label{item_ii:identifiability_I} Let $i \in I_a$, $a \in [K]$ and $|I_a| \geq 2$, then 
            \begin{equation*}
            j \in I_a \iff \chi(i,j) = 1 \textrm{ for any } j \in S_i. 
            \end{equation*}
        \item \label{item_iii:identifiability_I} The pure variable set $I$ can be determined uniquely from $\mathcal{X}$. Moreover its partition $\mathcal{I} = \{ I_a \}_{1 \leq a \leq K }$ is unique and can be determined from $\mathcal{X}$ up to label permutations. 
    \end{enumerate}
\end{theorem}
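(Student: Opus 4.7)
The plan is to exploit Theorem~\ref{thm:matrix_product}, which rewrites each extremal correlation as $\chi(i,j)=\sum_{k=1}^K A_{ik}\wedge A_{jk}$, together with the fact that under Condition~\ref{cond:(i)} each row $A_{i\cdot}$ is a nonnegative vector summing to $1$, and that Condition~\ref{cond:(ii)} supplies, for every $a\in[K]$, an index in $I_a$. All three parts will reduce to elementary manipulations of this formula.

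\textbf{Part (a).} For each $i\in[d]$ I would introduce the support $\sigma_i:=\{k\in[K]:A_{ik}>0\}$. Nonnegativity of the entries and the identity of Theorem~\ref{thm:matrix_product} give $\chi(i,j)=0$ iff $\sigma_i\cap\sigma_j=\emptyset$. Condition~\ref{cond:(i)} forces $\sigma_i\neq\emptyset$, so a clique in $G$ is a family of pairwise disjoint nonempty subsets of $[K]$, which bounds its size by $K$. Condition~\ref{cond:(ii)} provides one pure index per cluster, whose supports are $K$ distinct singletons and therefore form a clique of size $K$, so every maximum clique has size $K$ and the latent dimension is identifiable from $\mathcal{X}$.

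\textbf{Part (b).} If $j\in I_a$, then $A_{j\cdot}=A_{i\cdot}=e_a$ and Theorem~\ref{thm:matrix_product} gives $\chi(i,j)=1$. The hypothesis $|I_a|\ge 2$ provides some $j'\in I_a\setminus\{i\}$ with $\chi(i,j')=1$, hence $M_i=1$. Conversely, for $j\in S_i$ the relation $\chi(i,j)=M_i=1$ reduces, using $A_{ik}=\mathbf{1}_{k=a}$ and $0\le A_{jk}\le 1$, to $A_{ja}=1$; Condition~\ref{cond:(i)} then forces $A_{jb}=0$ for every $b\neq a$, so $j\in I_a$.

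\textbf{Part (c).} From (a) one reads off $K$ and extracts a maximum clique $C\subset[d]$ of size $K$; the pigeonhole argument in (a) makes each $\sigma_c$, $c\in C$, a singleton, so the elements of $C$ are pure variables covering every cluster exactly once. Labelling the unique factor attached to $c$ as $a(c)$, the identity $\chi(c,j)=A_{j,a(c)}$ together with Condition~\ref{cond:(i)} yields
\[
I_{a(c)}=\{c\}\cup\bigl\{j\in[d]\setminus\{c\}:\chi(c,j)=1\bigr\},
\]
which handles both the case $|I_{a(c)}|\ge 2$ and, tautologically, the case $|I_{a(c)}|=1$. Taking the union over $c\in C$ recovers $I$, and the resulting partition $\mathcal{I}$ is unique up to the arbitrary labelling of the latent factors, i.e. up to a permutation of $[K]$.

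The main obstacle is making the clique statement in (a) airtight, since both (b) and (c) tacitly rely on the fact that a maximum clique is not merely of size $K$ but is automatically composed of pure variables representing distinct clusters; the support/pigeonhole argument above is what converts the combinatorial bound into this structural conclusion, and once it is in place the remaining parts are short unpackings of Theorem~\ref{thm:matrix_product}.
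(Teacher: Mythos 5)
Your proposal is correct and follows essentially the same route as the paper: it rests on the identity $\chi(i,j)=\sum_{k}A_{ik}\wedge A_{jk}$ from Theorem \ref{thm:matrix_product}, on the pairwise extremal independence of pure variables to identify $[K]$ as a maximum clique, and on the relation $\chi(i,j)=A_{ja}$ for $i\in I_a$ (with $A_{ja}=1$ forcing $j\in I_a$ under Condition \ref{cond:(i)} and nonnegativity) to recover $I$ and its partition. Your explicit support/pigeonhole argument in part \ref{item_i:identifiability_I} — a clique has pairwise disjoint nonempty supports, so any clique of size $K$ consists of singleton supports, i.e.\ pure variables representing distinct factors — makes airtight what the paper asserts more tersely via the pure subvector being the largest asymptotically independent one, and your formula $I_{a(c)}=\{c\}\cup\{j\neq c:\chi(c,j)=1\}$ treats the case $|I_a|=1$ uniformly, where the paper's constructive algorithm based on $M_i$ and $S_i$ handles it separately.
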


\begin{remark}
    \label{rmk:gissibl} It is crucial to emphasise that the identification of recursive max-linear models on a Directed Acyclic Graph involves examining the extremal correlation matrix and the initial nodes $V_0$, specifically, the nodes without parent connections (see Section 4.3 of \cite{gissibl2018tail}). Furthermore the set $V_0$ can be determined from the tail dependence matrix since it is a maximum clique of the undirected graph $G=(V,E)$ where $V= [d]$, and $(i,j) \in E$ if $\chi(i,j) = 0$ (refer to \cite[Theorem 2.7]{gissibl2018tail}), akin to the characterisation provided in Theorem \ref{thm:identifiability_I} \ref{item_i:identifiability_I} to identify the set $[K]$.
\end{remark}

The decision problem of the maximum clique problem is one of the first 21 NP-complete problems, as introduced by Karp in his influential paper on computational complexity (\cite{Karp1972}). This problem is known for its exponential complexity as the number of vertices increases in the worst cases. However, in the real world, many graphs tend to be sparse, meaning they have low degrees of connectivity (as noted in works by \cite{buchanan2014solving, eppstein2010listing}). This sparsity property allows for more efficient algorithms to solve the maximum clique problem in sparse graphs compared to general graphs.

In our framework, we have made the implicit practical assumption that the rows of the matrix $A$ are sparse. Consequently, the complement of the adjacency matrix $E$, denoted as $\bar{E}$, in the graph defined in Theorem \ref{thm:identifiability_I} \ref{item_i:identifiability_I}, is also sparse. In this context, a faster method to find a maximum clique is presented through the following binary problem:
\begin{equation*}
\begin{aligned}
\max_{x^{(i)}} \quad & \sum_{i=1}^d x^{(i)}\\
\textrm{s.t.} \quad & x^{(i)} + x^{(j)} \leq 1, \quad \forall (i,j) \in \bar{E}\\
  &x^{(i)} \in \{0,1\}, \quad i = 1,\dots,d.    \\
\end{aligned}
\end{equation*}
In this edge-based formulation, any valid solution defines a clique $C$ in the graph $G$ as follows: a vertex $i$ belongs to the clique if $x_i =1$, and otherwise $x_i = 0$. In our numerical studies, the use of this problem accelerated the estimation process, reducing computation time from minutes to seconds for large dimensions. In situations where the sparsity of matrix $A$ is less emphasised, attention shifts to the adjacency matrix $E$ which now becomes sparse. In such instances, well-known algorithms efficiently operates on the graph $E$. An excellent demonstration of this efficiency is found in the classical algorithm authored by \cite{bron1973algorithm}. For a more in-depth understanding of these intricacies, a comprehensive exploration awaits in \ref{sec:computation_time}.

The identifiability of the allocation matrix $A$ and that of the collection of clusters $\mathcal{G}= \{G_1,\dots,G_K\}$ in \eqref{eq:pure_set} use the results of Theorem \ref{thm:identifiability_I} in crucial ways. We state the result in Theorem \ref{thm:identifiability_J} below.

\begin{theorem}
    \label{thm:identifiability_J}
    Assume that model \eqref{eq:lfm} and conditions \ref{cond:(i)}-\ref{cond:(ii)} hold, $A$ can be uniquely recovered from $\mathcal{X} = A \odot A^\top$, up to column permutations. This implies that the associated soft clusters $G_a$, for $1 \leq a \leq K$, are identifiable, up to label switching.
\end{theorem}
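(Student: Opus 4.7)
The plan is to leverage Theorem \ref{thm:identifiability_I} to reduce the problem to identifying the non-pure rows $A_J$, and then show that each such row is read off directly from the extremal correlation matrix by pairing non-pure variables with any representative pure variable.

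First, I would invoke Theorem \ref{thm:identifiability_I}\ref{item_iii:identifiability_I} to recover $K$, the pure variable set $I$, and its partition $\mathcal{I}=\{I_a\}_{1\le a\le K}$ from $\mathcal{X}$, uniquely up to a relabelling of the $K$ groups. Fix any such labelling; this choice corresponds to fixing a column permutation of $A$ once and for all. Under Condition \ref{cond:(ii)}, for every $i\in I_a$ the row $A_{i\cdot}$ equals the $a$-th canonical basis vector of $\mathbb{R}^K$, so the submatrix $A_I$ is completely determined by $\mathcal{I}$ in the fixed labelling.

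Next I would identify $A_J$, i.e.\ the rows indexed by $j\in J:=[d]\setminus I$. For any such $j$ and any $a\in[K]$, choose an arbitrary pure variable $i\in I_a$ (which exists by Condition \ref{cond:(ii)}). Apply Theorem \ref{thm:matrix_product} to obtain
\begin{equation*}
  \chi(i,j)=\sum_{k=1}^K A_{ik}\wedge A_{jk}.
\end{equation*}
Since $A_{ik}=\mathbf{1}\{k=a\}$ and $0\le A_{jk}\le 1$ by Condition \ref{cond:(i)}, each summand with $k\neq a$ vanishes while the $k=a$ summand equals $A_{ja}\wedge 1=A_{ja}$. Hence $A_{ja}=\chi(i,j)$, and this value does not depend on which representative $i\in I_a$ is chosen. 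Running $a$ over $[K]$ and $j$ over $J$ reconstructs $A_J$ entirely from $\mathcal{X}$ in the fixed labelling. Combined with the previous step, the whole matrix $A$ is recovered from $\mathcal{X}$, uniquely up to a single column permutation.

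Finally, identifiability of the soft clusters $\mathcal{G}=\{G_1,\dots,G_K\}$ in \eqref{eq:pure_clusters} is immediate: each $G_a=\{j\in[d]:A_{ja}\neq 0\}$ is a functional of the $a$-th column of $A$, so uniqueness of $A$ up to column permutation translates into uniqueness of $\mathcal{G}$ up to label switching. The only real difficulty in the argument has already been absorbed by Theorem \ref{thm:identifiability_I} (in which the pure set and its partition are disentangled from the full extremal correlation matrix); once that is granted, the present step is essentially a direct computation via the $\odot$ formula, so I would expect no additional obstacle beyond checking that the recovery is invariant under the choice of representative $i\in I_a$, which follows from all pure variables in a given group being exchangeable.
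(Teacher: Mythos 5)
Your proposal is correct and follows essentially the same route as the paper: recover $K$, $I$, and $\mathcal{I}$ via Theorem \ref{thm:identifiability_I}, identify $A_I$ as canonical basis rows, and read off $A_{ja}=\chi(i,j)$ for pure $i\in I_a$ using Theorem \ref{thm:matrix_product} (the paper merely averages over $i\in I_a$ instead of fixing a representative, which is the same computation). The concluding step on cluster identifiability also matches the paper's argument.
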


\begin{remark}
    \label{remark:one}
    If Condition \ref{cond:(i)} is replaced with
    \begin{equation*}
        \sum_{a = 1}^K A_{ja} \neq 1,
    \end{equation*}
    then the loading matrix is no longer identifiable from $\mathcal{X}$. Indeed, consider $\mathbf{X} = A \textbf{Z}$ and $\tilde{\mathbf{X}} = \tilde{A}\textbf{Z}$ with $\tilde{A} = \lambda A$ for some $\lambda \neq 1$ and $A$ verifies Condition \ref{cond:(i)}. By Theorem \ref{thm:matrix_product}, we have
    \begin{equation*}
        \mathcal{X} = A \odot A^\top,
    \end{equation*}
    and using the same tools involved in the proof of Theorem \ref{thm:matrix_product}, we have
    \begin{equation*}
        \tilde{\chi}(i,j) = \underset{x \rightarrow \infty}{\lim}\frac{\mathbb{P}\{ \tilde{X}_j > x, \tilde{X}_j > x \}}{\mathbb{P}\{ \tilde{X}_i > x\}} = \frac{\sum_{k=1}^K (\lambda A_{ik}) \wedge (\lambda A_{jk})}{\sum_{k=1}^K (\lambda A_{ik})} = A_{ik} \wedge A_{jk}.
    \end{equation*}
    Thus $\mathcal{X} = \tilde{\mathcal{X}}$, and we cannot recover A from the extremal correlation matrix.
\end{remark}
\begin{remark}
    \label{remark:two}
    We show that the pure variable assumption stated in Condition \ref{cond:(ii)} is needed for the identifiability of $A$ with the extremal correlation, up to a permutation. Consider the specific example where $d=3$ and $K=2$
    \begin{equation*}
        A^{(1)} = \begin{pmatrix}
            0.7 & 0.3 \\
            0.3 & 0.7 \\
            0.5 & 0.5
        \end{pmatrix}.
    \end{equation*}
    Then with some computations using Theorem \ref{thm:matrix_product}, we obtain that the extremal correlation of $\mathbf{X}$ is equal to
    \begin{equation*}
    	\mathcal{X} = \begin{pmatrix}
            1 & 0.6 & 0.8 \\
            0.6 & 1 & 0.8 \\
            0.8 & 0.8 & 1
        \end{pmatrix}.
    \end{equation*}
    Now taking 
     \begin{equation*}
         A^{(2)} = \begin{pmatrix}
            0.8 & 0.2 \\
            0.4 & 0.6 \\
            0.6 & 0.4
        \end{pmatrix},
     \end{equation*}
     leads the same extremal correlation matrix. Thus, if $A$ does not satisfy \ref{cond:(ii)}, $A$ is generally not identifiable with the extremal correlation matrix.
\end{remark}

\section{Estimation}
\label{sec:estimation_chap_4}

Suppose that $(\mathbf{X}_t, t\in \mathbb{Z}) = (X_t^{(1)},\dots, X_t^{(d)}, t \in \mathbb{Z})$ is a multivariate strictly stationary process, and that $(\mathbf{X}_t, t=1\dots,n)$ is observable data. Let $m \in \{1,\dots,n\}$ be a block size parameter and, for $i = 1, \dots,k$ and $j = 1,\dots,d$, let $M_{m,i}^{(j)} = \max \{ X_t^{(j)} \, : \, t \in [(i-1)m, \dots,im] \}$ be the maximum of the $i$th block observations in the $j$th coordinate. For $\mathbf{X} = (x^{(1)},\dots,x^{(d)})$, let
\begin{align*}
    &\textbf{M}_{m,i} = (M_{m,i}^{(1)},\dots,M_{m,i}^{(d)}), \\
    & F_m^{(j)}(x) = \mathbb{P}\{M_{m,1}^{(j)} \leq x \}, \\
    & \textbf{F}_m(\mathbf{X}) = (F_m^{(1)}(x^{(1)}), \dots F_m^{(d)}(x^{(d)})), \\
    & U_{m,i}^{(j)} = F_m^{(j)}(M_{m,i}^{(j)}), \\
    & \textbf{U}_{m,i} = (U_{m,i}^{(1)},\dots,U_{m,i}^{(d)}).
\end{align*} 
Subsequently, we assume that the marginals of $X_1^{(1)},\dots,X_1^{(d)}$ are continuous. In that case, the marginals of $\textbf{M}_{m,1}$ are continuous as well and
\begin{equation*}
    C_m(\textbf{u}) = \mathbb{P}\{U_{m,1}^{(1)} \leq \textbf{u} \}, \quad\textbf{u} \in [0,1]^d,
\end{equation*}
is the unique copula associated with $\textbf{M}_{m,1}$. Let us consider the following set $\Delta_{d-1} = \{ (w^{(1)},\dots,w^{(d)}) \in [0,\infty)^d \,: \, \sum_{j=1}^d w^{(j)} = 1 \}$ which is the unit simplex in $\mathbb{R}^d$. Throughout, we shall work under the following data generative process.

\begin{definition}[Data generative process]
\label{def:data_gen_pro}
 Let $(\mathbf{X}_t, t\in \mathbb{Z})$ be a multivariate strictly stationary random process and $(\mathbf{X}_t, t=1,\dots,n)$ its observable data. Let $m \in \{1,\dots,n\}$ and $C_m$ the copula of $\textbf{M}_{m,1}$ such that $C_m$ is positive quadrant dependent, meaning that
\begin{equation}
	\label{eq:pod}
	C_m(\textbf{u}) \geq \Pi_{j=1}^d u^{(j)}, \quad \textbf{u} \in [0,1]^d.
\end{equation}
There exist a copula $C_\infty$, a finite Borel measure $\Phi$ on the unit positive sphere as defined in equation \eqref{eq:X_ang_meas} such that
    \begin{equation}
    	\label{eq:DMA}
        \underset{m \rightarrow \infty}{\lim} \, C_m(\textbf{u}) = C_\infty(\textbf{u}), \quad \textbf{u} \in [0,1]^d,
    \end{equation}
    where
    \begin{equation*}
        C_\infty(\textbf{u}) = \exp \left\{ - L \left(-\ln(u^{(1)}),\dots, - \ln(u^{(d)}) \right) \right\}
    \end{equation*}
    and the stable tail dependence function $L : [0,\infty)^d \rightarrow [0,\infty)$ is described by
    \begin{equation*}
        L(z^{(1)},\dots,z^{(d)}) = \sum_{a=1}^K \bigvee_{j=1}^d A_{ja}z^{(j)}.
    \end{equation*}
\end{definition}

Since extreme value copulae are positive quadrant dependent (see, e.g., \cite[Section 5.8]{resnick2008extreme}), it is expected that in practice $C_m$, i.e., a proxy to $C_\infty$ will also verify this property for a sufficiently large $m$. The max-domain of attraction in \eqref{eq:DMA} and the definition of the stable tail dependence function indicate that our observations are in the max-domain of attraction of a max-stable distribution with discrete angular measure. Then, this discrete max-stable distribution is characterised by a vector of latent factors $\mathbf{Z} \in \mathbb{R}^K$ and a matrix $A \in \mathbb{R}^{d \times K}$ that we want to estimate. As we will see, we will provide in Section \ref{sec:stat_guarantees} a non-asymptotic analysis of our estimator, which is valid for any $d$, $n$, $m$ and $k$. Such an analysis avoids the need to have the max-domain of attraction condition given in Equation \eqref{eq:DMA} to derive the main bound of the estimator's risk. However, an implicit link between $C_m$ and $C_\infty$ will be given through a bias term, which is expected to be smaller with respect to the block's length if \eqref{eq:DMA} is satisfied in the data. Nonetheless, for presentation purposes, our statistical findings will be presented with this condition satisfied.
  
Our estimation procedure is inspired from \cite{bing2020adaptative} and consists of the following four steps:

\begin{enumerate}[label=\textcolor{frenchblue}{\bf(\alph*)}]
    \item Estimate the number of clusters $K$, the pure variable set $I$ and the partition $\mathcal{I}$;
    \item Estimate $A_I$, the submatrix of $A$ with rows $A_{i\cdot}$ that correspond to $i \in I$;
    \item Estimate $A_J$, the submatrix of $A$ with rows $A_{j\cdot}$ that correspond to $j \in J$;
    \item Estimate the overlapping clusters $\mathcal{G} = \{G_1,\dots, G_K\}$.
\end{enumerate}

\subsection{Estimation of $I$ and $\mathcal{I}$}

In the context of our analysis, we need to estimate the submatrices, denoted as $A_I$ and $A_J$, separately. To begin with $A_I$, we initiate the estimation process by determining $[K]$, which subsequently allows us to identify $I$ and its partition, denoted as $\mathcal{I} = \{I_1, \dots, I_K\}$. This partition can be uniquely constructed from the extremal correlation matrix $\mathcal{X}$, as demonstrated in Theorem \ref{thm:identifiability_I}. To perform this step, we employ the constructive proof provided by Theorem \ref{thm:identifiability_I}, substituting the unknown $\mathcal{X}$ with its sampled counterpart, referred to as the extremal correlation matrix:
\begin{equation*}
    \widehat{\mathcal{X}} = [\hat{\chi}_{n,m}(i,j)]_{i =1,\dots,d, j=1,\dots,d}.
\end{equation*}
The quantity $\hat{\chi}_{n,m}(i,j)$ is the sampling version of the \emph{pre-asymptotic} extremal correlation, $\chi_m(i,j)$ between $M_{m,1}^{(i)}$ and $M_{m,1}^{(j)}$ using block-maxima approach, i.e.,
\begin{equation*}
	\chi_m(i,j) = 2 - \frac{0.5 + \nu_m(i,j)}{0.5-\nu_m(i,j)}, \quad \nu_m(i,j) = \frac{1}{2}\mathbb{E}\left[| U_{m,1}^{(i)} - U_{m,1}^{(j)} |\right].
\end{equation*}
The quantity $\nu_m(i,j)$ is the bivariate madogram (\cite{cooley2006variograms}) between $M_{m,1}^{(i)}$ and $M_{m,1}^{(j)}$. Since $\nu_m(i,j) \geq 0$, it stems down that $\chi_m(i,j) \leq 1$ and given that $C_m$ satisfies \eqref{eq:pod}, we have $\nu_m(i,j) \leq 1/6$, implying that $\chi_m(i,j) \geq 0$. These last quantities can be approached with the empirical madogram using the following relationship:
\begin{equation*}
    \hat{\chi}_{n,m}(i,j) = 2 - \frac{0.5+\tilde{\nu}_{n,m}(i,j)}{0.5-\tilde{\nu}_{n,m}(i,j)}, \quad \tilde{\nu}_{n,m}(i,j) = \min\left\{\hat{\nu}_{n,m}(i,j), \frac{1}{6} \right\}
\end{equation*}
where $\tilde{\nu}_{n,m}(i,j)$ represents the projection estimator on the segment $[0,1/6]$ of $\hat{\nu}_{n,m}(i,j)$, the bivariate empirical estimator of the madogram. This latter estimator can be readily generalised by applying the identity $|a-b| /2 = \max(a,b) - (a+b)/2$, extended to its multivariate counterpart
\begin{equation*}
    \hat{\nu}_{n,m}(1,\dots,d) = \frac{1}{k} \sum_{i=1}^k \left[ \bigvee_{j=1}^d \hat{U}_{n,m,i}^{(j)} - \frac{1}{d} \sum_{j=1}^d \hat{U}_{n,m,i}^{(j)} \right].
\end{equation*}
Here, we have standardised the marginals by ranking the observed block maxima componentwise. For a given value $x \in \mathbb{R}$, $j = 1, \dots, d$, and block size $m$, we define:
\begin{equation*}
    \hat{F}_{n,m}^{(j)}(x) = \frac{1}{k+1} \sum_{i=1}^k \mathds{1}_{ \{ M_{m,i}^{(j)} \leq x \} },
\end{equation*}
and we consider observable pseudo-observations of $U_{m, i}^{(j)}$ as follows:
\begin{equation*}
    \hat{U}_{n,m,i}^{(j)} = \hat{F}_{n,m}^{(j)}(M_{m,i}^{(j)}).
\end{equation*}

To elaborate on our approach, we construct a graph denoted as $G = (V, E)$, where the vertex set is represented as $V = [d]$. We utilise the sample version of part \ref{item_i:identifiability_I} of Theorem \ref{thm:identifiability_I} to identify the largest vector that is asymptotically independent. Using this set of indices, we then employ the sample version of part \ref{item_ii:identifiability_I} of Theorem \ref{thm:identifiability_I} to determine whether a specific index $j$ qualifies as a pure variable. If an index is not categorised as pure, we include it in the set that estimates $J$. However, if it is deemed pure, we retain the estimated set $\hat{S}_i$, as defined in \eqref{eq:S_i}, of indices $j \neq i$ that are strongly associated, through their extremal correlations, with $i$. Subsequently, we utilise the constructive proof of part \ref{item_iii:identifiability_I} of Theorem \ref{thm:identifiability_I} to declare that $\hat{S}_i \cup {i} = \hat{I}^{(i)}$ serves as an estimator for one of the partition sets within $\mathcal{I}$.

For a comprehensive understanding of the algorithm, including the specification of a tuning parameter denoted as $\delta$, please refer to Algorithm \ref{alg:pure_variable} in \ref{subsec:algorithms}. The discussion pertaining to the tuning parameter $\delta$ will be presented in detail in Section \ref{subsec:stat_guarantees_k_i}.

\subsection{Estimation of the allocation matrix $A$ and soft clusters.}
\label{subsec:estim_alloc}
Based on the estimators $\hat{I}$, $\hat{K}$, and $\hat{\mathcal{I}} = \{ \hat{I}_1, \dots, \hat{I}_{\hat{K}} \}$ obtained from Algorithm \ref{alg:pure_variable}, we estimate the matrix $A_I$. This estimation takes the form of a matrix with dimensions $|\hat{I}| \times \hat{K}$, where each row corresponding to an index $i$ in $\hat{I}$ contains $\hat{K}-1$ zeros and one entry equal to 1. This procedure induces the following estimator of $A_I$

\begin{equation}
    \label{eq:estimator_A_I}
    \hat{A}_{ka} = \hat{A}_{la} = 1, \quad \textrm{for } k,l \in \hat{I}_a, \quad a \in [\hat{K}].
\end{equation}

We continue by estimating the matrix $A_J$, row by row. To explain our approach, we first outline the structure of each row, denoted as $A_{j \cdot}$, within the matrix $A_J$, for $j \in J$. We should note that each $A_{j \cdot}$ satisfies sparsity conditions and $\sum_{a=1}^K A_{ja} = 1$, as stipulated by Condition \ref{cond:(i)}. To simplify the exposition, we rearrange $\mathcal{X}$ and $A$ as follows
\begin{equation*}
    \mathcal{X} = \begin{bmatrix}
\mathcal{X}_{II} & \mathcal{X}_{IJ}\\
\mathcal{X}_{JI} & \mathcal{X}_{JJ}
\end{bmatrix},
\quad
A = \begin{bmatrix}
A_I\\
A_J
\end{bmatrix}.
\end{equation*}
Model \eqref{eq:lfm} and Theorem \ref{thm:matrix_product} imply the following decomposition of the extremal correlation matrix of $\mathbf{X}$
\begin{equation*}
     \mathcal{X} = \begin{bmatrix}
\mathcal{X}_{II} & \mathcal{X}_{IJ}\\
\mathcal{X}_{JI} & \mathcal{X}_{JJ}
\end{bmatrix}
=
\begin{bmatrix}
A_I \odot A_I^\top & A_I \odot A_J^\top\\
A_J^\top \odot A_I  & A_J \odot A_J^\top 
\end{bmatrix}.
\end{equation*}
In particular, $\mathcal{X}_{IJ} = A_I \odot A_J^\top$. Thus for each $i \in I_a$ with some $a \in [K]$ and $j \in J$, we have
\begin{equation*}
    \chi(i,j) = A_{ja}.
\end{equation*}
Averaging the above display over all $i \in I_a$ yields
\begin{equation*}
    \frac{1}{|I_a|} \sum_{i \in I_a} \chi(i,j) = A_{ja}.
\end{equation*}
Hence
\begin{equation*}
    \beta^{(j)} := A_{j \cdot} = \left( \frac{1}{|I_1|} \sum_{i \in I_1} \chi(i,j), \dots, \frac{1}{|I_K|} \sum_{i \in I_K} \chi(i,j) \right),
\end{equation*}
which can be estimated from the data as follows. For each $j \in \hat{J}$, we denote an estimator for the $a$-th element of $\beta^{(j)}$ using a simple approach. This estimator is represented as follows
\begin{equation*}
    \bar{\chi}^{(j)} = \left( \frac{1}{|\hat{I}_1|} \sum_{i \in \hat{I}_1} \hat{\chi}_{n,m}(i,j), \dots, \frac{1}{|\hat{I}_{\hat{K}}|} \sum_{i \in \hat{I}_{\hat{K}}} \hat{\chi}_{n,m}(i,j) \right).
\end{equation*}
It is important to note that this estimator is neither sparse nor an element of the unit simplex. Given the value $\bar{\chi}^{(j)}$, our objective is to determine a Euclidean projection of $\bar{\chi}^{(j)}$ that lies within the space $\mathbb{B}_0(s) = \{ \beta \in \mathbb{R}^{\hat{K}}, \, \sum_{j=1}^{\hat{K}} \mathds{1}_{\{ \beta^{(j)} \neq 0 \} } \leq s \}$, i.e., vectors with at most $s$ non-zero entries, and the unit simplex $\Delta_{\hat{K}-1} = \{ \beta \in \mathbb{R}^{\hat{K}}, \, \beta^{(j)} \geq 0, \sum_{j=1}^{\hat{K}} \beta^{(j)} = 1 \}$ :
\begin{equation}
    \label{eq:proj_simplex}
    \mathcal{P}(\hat{\beta}^{(j)}) \in \underset{\beta : \beta \in \mathbb{B}_0(s) \cap \Delta_{\hat{K}-1}}{\textrm{argmin}} || \beta - \bar{\chi}^{(j)}||_2.
\end{equation}

\cite{kyrillidis2013sparse} have demonstrated the feasibility of computing such a projection efficiently, using a simple greedy algorithm. This algorithm is outlined in \ref{alg:htsp} (Hard Thresholding and Simplex Projector), and it involves two main steps: first, identifying the support of $\hat{\beta}^{(j)}$, and then determining these values associated with this support. Consequently, in order to construct an estimator for the support, we select only the coordinates indexed by $a$ where $\bar{\chi}^{(j)}_a$ exceeds a threshold $\delta$. This selection results in a sparse estimator for $\beta^{(j)}_a$ as follows
\begin{equation*}
    \bar{\beta}^{(j)}_a = \bar{\chi}^{(j)}_a \mathds{1}_{ \{ \bar{\chi}^{(j)}_a > \delta \} }, \, a = 1, \dots, \hat{K}.
\end{equation*}
This estimator $\bar{\beta}^{(j)}$ is often referred to as the hard thresholding estimator, where $\delta>0$ represents the threshold value. However, it is essential to note that the estimator $\bar{\beta}^{(j)}$ does not inherently belong to the unit simplex. To address this, we can obtain an alternative estimator, denoted as $\hat{\beta}^{(j)}$, by projecting $\bar{\beta}^{(j)}$ onto the unit simplex within the $\hat{K}$-dimensional space. The projection operation onto the unit simplex is achieved by utilising a specific mathematical operator, and this operator is defined as
\begin{equation*}
    (\mathcal{P}_{\Delta_{\hat{K}-1}}(\beta))_{j} = [\beta^{(j)} - \tau]_+, \, \tau := \frac{1}{\rho} \left( \sum_{i=1}^\rho \beta^{(j)} - 1 \right),
\end{equation*}
for $\rho := \max \{ k, \, \beta^{(j)} > \frac{1}{k}(\sum_{j=1}^k w^{(j)}-1) \}$. Hence, by denoting $\widehat{\mathcal{S}} = \textrm{supp}(\bar{\beta}^{(j)})$, we obtain 
\begin{equation}
    \label{eq:estimator_A_J}
    \left.\hat{\beta}^{(j)}\right|_{\widehat{\mathcal{S}}} = \mathcal{P}_{\Delta_{\hat{K}-1}}\left(\left.\bar{\beta}^{(j)}\right|_{\widehat{\mathcal{S}}}\right), \quad \left.\hat{\beta}^{(j)}\right|_{\widehat{\mathcal{S}}^c} = 0
\end{equation}
Next, we construct the matrix $\hat{A}_{\hat{J}}$ with rows corresponding to the estimators $\hat{\beta}^{(j)}$ for each $j \in \hat{J}$. Our final estimator, denoted $\hat{A}$, for the matrix $A$, is obtained by concatenating $\hat{A}_{\hat{I}}$ and $\hat{A}_{\hat{J}}$. The statistical properties of the final estimator are examined in Section \ref{sec:stat_guarantees}, where we also provide detailed specifications of the tuning parameter necessary for its development.

Recalling the definition of groups in \eqref{eq:pure_clusters}, the soft clusters are estimated by
\begin{equation}
    \label{eq:estimator_over_clust}
    \hat{\mathcal{G}} = \{ \hat{G}_1, \dots, \hat{G}_{\hat{K}} \}, \; \hat{G}_a = \left\{i \in [d] \, : \, \hat{A}_{ia} \neq 0\right\}, \textrm{ for each } a \in [\hat{K}].
\end{equation}
Variables $X^{(j)}$ that are associated (via $\hat{A}$) with the same latent factor $Z^{(a)}$ are therefore placed in the same group $\hat{G}_a$. We demonstrate in Section \ref{subsec:stat_guarantees_A} that the overlapping clusters or extreme directions can be controlled with high probability.
\section{Statistical guarantees}
\label{sec:stat_guarantees}
This section serves a dual purpose. Firstly, we derive a bound akin to Bernstein's inequality for the uniform norm of the sampled version of the extremal correlation. This is achieved by utilising a sequence of dependent and strictly stationary random variables denoted as $(\mathbf{X}_t,t \in \mathbb{Z})$. Secondly, we employ the developed techniques to investigate statistical guarantees for the following aspects:
\begin{enumerate}[label=\textcolor{frenchblue}{\bf(\alph*)}]
    \item The estimated number of clusters, denoted as $\hat{K}$;
    \item The estimated pure variable set $\hat{I}$ and its corresponding estimated partition $\hat{\mathcal{I}}$;
    \item The estimated allocation matrix $\hat{A}$ and its adjustment to account for the unknown sparsity of rows in matrix $A$;
    \item Guarantees to recover overlapping clusters / extreme directions in terms of Total False Positive Proportion (TFPP) and Total False Negative Proportion (TFNP).
\end{enumerate}
We recall the definition of strongly mixing sequences, introduced by \cite{rosenblatt1956central}: For any two $\sigma$-algebras $\mathcal{A}$ and $\mathcal{B}$, we define the $\alpha$-mixing coefficient by
\begin{equation*}
    \alpha(\mathcal{A}, \mathcal{B}) = \underset{A \in \mathcal{A}, B \in \mathcal{B}}{\sup} \, | \mathbb{P}(A \cap B) - \mathbb{P}(A)\mathbb{P}(B) |.
\end{equation*}
Let $(\mathbf{X}_t, t\in \mathbb{Z})$ be a sequence of real-valued random variables defined on $(\Omega, \mathcal{A}, \mathbb{P})$. This sequence will be called strongly mixing if
\begin{equation}
    \label{eq:alpha_mixing}
    \alpha(n) := \underset{k \geq 1}{\sup} \, \alpha(\mathcal{F}_k, \mathcal{G}_{k+n}) \rightarrow 0 \, \textrm{as} \, n \rightarrow \infty,
\end{equation}
where $\mathcal{F}_j := \sigma(\mathbf{X}_i, i \leq j)$ and $\mathcal{G}_j := \sigma(\mathbf{X}_i, i \geq j)$ for $j \geq 1$. Throughout, we assume the sequence $(\mathbf{X}_t, t\in \mathbb{Z})$ has exponentially decaying strong mixing coefficients, that is
\begin{equation}
    \label{eq:exp_dec}
    \alpha(n) \leq \exp\{-2 c n\},
\end{equation}
for a certain $c > 0$. Our first result is the following exponential inequality.
\begin{theorem}
    \label{thm:exp_ineq}
    Let $(\mathbf{X}_t, t\in \mathbb{Z})$ be a sequence of stationary random variables with exponential decaying strong mixing coefficients given in \eqref{eq:exp_dec}. Then, there are constants $c_0 > 0$ and $c_1 > 0$ which depend only on $c$ such that
    \begin{equation*}
        \mathbb{P} \left\{  \underset{1 \leq i < j \leq d}{\sup} | \hat{\chi}_{n,m}(i,j) - \chi_m(i,j) | \geq c_1 \left( \sqrt{\frac{\ln\left(k d\right)}{k}} + \frac{\ln(k) \ln \ln(k)\ln(k d)}{k} \right) \right\} \leq d^{-c_0},
    \end{equation*}
    where $n$, $m \in \{1,\dots,n\}$ denote respectively the sample and the block size, $k = \lfloor n / m \rfloor \geq 4$ and $\chi_m(i,j) \in [0,1]$ is the \emph{pre-asymptotic} extremal correlation between $M^{(i)}_{m,1}$ and $M^{(j)}_{m,1}$.
    
\end{theorem}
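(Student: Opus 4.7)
The plan is to reduce the claim to an exponential inequality for the bivariate madogram and then invoke a Bernstein-type bound for geometrically $\alpha$-mixing sequences together with a union bound over the $\binom{d}{2}$ pairs. The map $t \mapsto 2 - (0.5+t)/(0.5-t)$ is continuously differentiable on $[0, 1/6]$ with derivative $-1/(0.5-t)^2$, uniformly bounded in absolute value by $9$. Because $\tilde{\nu}_{n,m}(i,j)$ is a Euclidean projection onto $[0,1/6]$, hence nonexpansive, and $\nu_m(i,j)$ already lies in $[0,1/6]$ by the positive quadrant dependence \eqref{eq:pod}, one obtains
\begin{equation*}
|\hat{\chi}_{n,m}(i,j) - \chi_m(i,j)| \leq 9\,|\hat{\nu}_{n,m}(i,j) - \nu_m(i,j)|,
\end{equation*}
so it suffices to establish the corresponding uniform deviation bound for the madogram.

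I would then decompose
\begin{equation*}
\hat{\nu}_{n,m}(i,j) - \nu_m(i,j) = \bigl[\hat{\nu}_{n,m}(i,j) - \nu_{n,m}(i,j)\bigr] + \bigl[\nu_{n,m}(i,j) - \nu_m(i,j)\bigr],
\end{equation*}
where $\nu_{n,m}(i,j) := \frac{1}{k}\sum_{l=1}^k \tfrac{1}{2}|U_{m,l}^{(i)} - U_{m,l}^{(j)}|$ is the oracle version built from the true (unobservable) uniform block maxima. The second, oracle, term is an average of $[0,1]$-bounded random variables. Because the block maxima are built from disjoint blocks of length $m$, the stationary sequence $(\mathbf{M}_{m,l})_{l=1}^k$ inherits the exponential mixing decay with coefficients $\alpha_{BM}(s) \leq \alpha((s-1)m) \leq \exp(-2c(s-1)m)$. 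Applying a Bernstein inequality of Merlevède--Peligrad--Rio type for bounded, geometrically $\alpha$-mixing sequences yields for each fixed pair a tail of the form $\exp(-c_2 k t^2 / (1 + t \log(k)\log\log(k)))$. Inverting this tail and union-bounding over the $\binom{d}{2} \leq d^2$ pairs produces precisely the two summands $\sqrt{\log(kd)/k}$ and $\log(k)\log\log(k)\log(kd)/k$.

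For the rank plug-in error, the elementary bound $|\,|a-b| - |\tilde{a}-\tilde{b}|\,| \leq |a-\tilde{a}| + |b - \tilde{b}|$ combined with the definition of pseudo-observations gives
\begin{equation*}
|\hat{\nu}_{n,m}(i,j) - \nu_{n,m}(i,j)| \leq \max_{l \in [d]}\,\sup_{x \in \mathbb{R}} \bigl|\hat{F}_{n,m}^{(l)}(x) - F_m^{(l)}(x)\bigr| + O(1/k),
\end{equation*}
where the $O(1/k)$ captures the discrepancy from the denominator $k+1$. For each coordinate $j$ the class $\{\mathds{1}_{M_{m,l}^{(j)} \leq x} : x \in \mathbb{R}\}$ has Vapnik--Chervonenkis dimension one, so combining the same Merlevède--Peligrad--Rio Bernstein inequality with a monotone chaining argument over a grid of $x$'s bounds $\sup_x |\hat{F}_{n,m}^{(j)} - F_m^{(j)}|$ at a rate of the same order as the oracle term. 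A further union bound over $j \in [d]$ preserves that rate, and merging the two contributions delivers the inequality stated in the theorem.

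The main technical obstacle is to verify that the constants produced by the Bernstein bound applied to the block-maxima sequence depend only on $c$, and not on the block length $m$ or the dimension $d$: one must track carefully that the effective decay rate $\exp(-2c(s-1)m)$ is at least as fast as the original one for $m \geq 1$, so the constants can indeed be taken uniform in $m$. A secondary difficulty is to absorb the lower-order polylogarithmic factors from both the oracle and the rank plug-in components into the two displayed summands without degrading the final probability $d^{-c_0}$; this is handled by choosing $c_1$ sufficiently large relative to the constants appearing in the Bernstein inequality and the chaining argument.
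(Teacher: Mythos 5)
Your proposal follows essentially the same route as the paper: reduce to the madogram via the $9$-Lipschitz map $t \mapsto 2-(0.5+t)/(0.5-t)$ on $[0,1/6]$ (using positive quadrant dependence and the nonexpansive projection), split the madogram error into an oracle Bernstein-type term for the bounded, geometrically mixing block-maxima sequence (Merlevède--Peligrad--Rio) plus a rank plug-in term controlled by $\sup_x|\hat{F}_{n,m}^{(j)}-F_m^{(j)}|$ via a grid/discretization argument, and finish with union bounds over coordinates and pairs. This matches the paper's Lemma on the madogram deviation and its two supporting lemmata, so the plan is correct and not materially different.
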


In the context of our section, let us introduce some mathematical notations and concepts. Consider the quantity $\chi(i,j)$, which represents the extremal correlation between $X^{(i)}$ and $X^{(j)}$, within the max-domain of attraction as specified by Equation \eqref{eq:DMA}. We also define a crucial parameter denoted as:

\begin{equation*}
    d_m = \underset{1 \leq i < j \leq d}{\sup} \, |\chi_m(i,j) - \chi(i,j) |,
\end{equation*}
where $\chi_m(i,j)$ is the \emph{pre-asymptotic} extremal correlation between $M_{m,1}^{(i)}$ and $M_{m,1}^{(j)}$. This parameter characterises the explicit bias between the subasymptotic framework and the max-domain of attraction. It essentially quantifies the rate at which the system converges to its asymptotic extreme behavior. Additionally, we introduce the following new event:
\begin{equation}
    \label{eq:mathcal_E}
    \mathcal{E} = \mathcal{E}(\delta):= \left\{ \underset{1 \leq i < j \leq d}{\sup} \, \left| \hat{\chi}_{n,m}(i,j) - \chi(i,j) \right| \leq \delta \right\}.
\end{equation}
Now, we state that
\begin{equation}
	\label{eq:theoretical_delta}
    \delta = d_m + c_1 \left( \sqrt{\frac{\ln\left(k d\right)}{k}} + \frac{\ln(k) \ln\ln(k)\ln(k d)}{k} \right),
\end{equation}
for some absolute constant $c_1 > 0$. Furthermore, we require that $\ln(d) = o(k)$. This condition ensures, with the additional max-domain of attraction in \eqref{eq:DMA}, that $\delta = o(1)$ provided that $m = o(n)$. Taking $c_1 > 0$ large enough, Theorem \ref{thm:exp_ineq} guarantees that $\mathcal{E}$ holds with high probability:
\begin{equation*}
    \mathbb{P}(\mathcal{E}) \geq 1-d^{-c_0},
\end{equation*}
for some positive constant $c_0 > 0$. The order of the threshold $\delta$ involves known quantity such as $d$, $k$ and a unknown parameter $d_m$. For the latter, there is no simple manner to choose optimally this parameter, as there is no simple way to determine how fast is the convergence to the asymptotic extreme behavior, or how far into the tail the asymptotic dependence structure appears. 

\subsection{Statistical guarantees for $\hat{K}$, $\hat{I}$ and $\hat{\mathcal{I}}$.}
\label{subsec:stat_guarantees_k_i}

We now move forward with the analysis of the statistical performance of our estimator, denoted as $\hat{I}$, which aims to estimate $I$. Alongside this estimation, we also consider its associated partition. This particular problem falls within the broader category of pattern recovery problems. It is well established that, given sufficiently strong signal conditions, we can reasonably expect that $\hat{I} = I$ with a high level of confidence. These conditions are stated below
\begin{Assumption}{$(\mathcal{SSC})$}
	\label{cond:SSC}
    Let $\mathcal{I} = \{I_a\}_{1 \leq a \leq K}$.
    \begin{enumerate}[label=\textcolor{frenchblue}{($\mathcal{SSC}\arabic*$)}]
    \item $\forall k \notin \mathcal{I}$, $A_{ka} < 1-2\delta$, $\forall a \in [K]$; \label{cond:SSC1}
    \item $\forall k \notin \mathcal{I}$, $\exists a,b \in [K]$ such that $A_{ka} > 2\delta$ and $A_{kb} > 2\delta$. \label{cond:SSC2}
\end{enumerate}
\end{Assumption}
In Theorem \ref{thm:stat_K_I}, we establish a critical result that provides a high-confidence guarantees for recovery of $K$, $I$ and $\mathcal{I}$ under the condition \ref{cond:SSC}. This theorem has several key implications. In the first aspect of these implications, our theorem demonstrate that, with high probability, the estimated set $\hat{I}$ is equal to the set of pure variables $I$. This implies that our procedure correctly identifies these fundamental variables. Ambiguous variables $X^{(j)}$ with $j \notin I$ that exhibit associations with multiple latent factors not exceeding the $1-2\delta$ threshold, as dictated by Condition \ref{cond:SSC1}, are deliberately excluded from $\hat{I}$. This exclusion is crucial to maintain the accuracy of our method. In the construction of the graph $G$ in part \ref{item_i:identifiability_I} of Theorem \ref{thm:identifiability_I}, Condition \ref{cond:SSC2} guarantees that only ambiguous variables are excluded from a maximum clique of $G$. This is of utmost importance because it ensures that the number of cluster $K$ is determined accurately.

\begin{theorem}
    \label{thm:stat_K_I}
    Let $(\mathbf{X}_t, t\in \mathbb{Z})$ be a sequence of stationary random variables with exponential decaying strong mixing coefficients given in \eqref{eq:exp_dec}, satisfying the data generative process given in Definition \ref{def:data_gen_pro}. Under Conditions \ref{cond:(i)}-\ref{cond:(ii)} and Condition \ref{cond:SSC}, then
    \begin{enumerate}[label=\textcolor{frenchblue}{\bf(\alph*)}]
    \item $\hat{K} = K$; \label{thm:stat_K_I_1}
    \item $I = \hat{I}$.\label{thm:stat_K_I_2}
    \end{enumerate}
    Moreover, there exists a label permutation $\pi$ of the set $\{1,\dots,K\}$ such that the output $\hat{\mathcal{I}} = \{\hat{I}_a\}_{1 \leq a \leq K}$ from Algorithm \ref{alg:pure_variable} satisfies
    \begin{enumerate}[label=\textcolor{frenchblue}{\bf(\alph*)}]
    \setcounter{enumi}{2}
    \item $I_{\pi(a)} = \hat{I}_{\pi(a)}$. \label{thm:stat_K_I_3}
    \end{enumerate}
    All results hold with probability larger than $1-d^{-c_0}$ for a positive constant $c_0$.
\end{theorem}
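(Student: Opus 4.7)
The plan is to carry out every argument on the good event $\mathcal{E}(\delta)$ defined in \eqref{eq:mathcal_E}, which by Theorem \ref{thm:exp_ineq} combined with the choice of $\delta$ in \eqref{eq:theoretical_delta} has probability at least $1-d^{-c_0}$. On $\mathcal{E}$, $|\hat{\chi}_{n,m}(i,j) - \chi(i,j)| \leq \delta$ uniformly, so the sample graph $\hat{G} = ([d], \hat{E})$ built inside Algorithm \ref{alg:pure_variable} (with edges $\{(i,j) : \hat{\chi}_{n,m}(i,j) \leq \delta\}$) satisfies the two deterministic implications $\chi(i,j) = 0 \Rightarrow (i,j) \in \hat{E}$ and $\chi(i,j) > 2\delta \Rightarrow (i,j) \notin \hat{E}$. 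The assumption $\ln(d) = o(k)$ yields $\delta = o(1)$; since $K$ is fixed we may further assume $\delta < 1/(2K)$. Everything below is deterministic on $\mathcal{E}$, which supplies the final probability bound.

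For part \ref{thm:stat_K_I_1}, the lower bound on the clique number is obtained by picking $i_a \in I_a$ for each $a \in [K]$: Theorem \ref{thm:matrix_product} gives $\chi(i_a, i_b) = 0$ whenever $a \neq b$, so $\{i_1,\dots,i_K\}$ is a clique in $\hat{G}$. For the upper bound, let $\hat{C}$ be any clique of $\hat{G}$ and, for each $v \in \hat{C}$, set $c_v \in \arg\max_{a \in [K]} A_{va}$; Condition \ref{cond:(i)} forces $A_{v c_v} \geq 1/K$. If distinct $v, w \in \hat{C}$ satisfied $c_v = c_w$, Theorem \ref{thm:matrix_product} would give $\chi(v,w) \geq A_{v c_v} \wedge A_{w c_v} \geq 1/K > 2\delta$, contradicting $(v,w) \in \hat{E}$. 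Hence $v \mapsto c_v$ is injective, $|\hat{C}| \leq K$, and $\hat{K} = K$.

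For part \ref{thm:stat_K_I_2}, fix any maximum clique $\hat{C}$; by the previous step, $c \colon \hat{C} \to [K]$ is a bijection. Suppose some $v \in \hat{C}$ were non-pure. Condition \ref{cond:SSC2} yields $a', b' \in [K]$ with $A_{v a'}, A_{v b'} > 2\delta$; letting $w \in \hat{C}$ be the unique element with $c_w = b'$, one has $A_{w b'} \geq 1/K > 2\delta$, hence $\chi(v,w) \geq A_{v b'} \wedge A_{w b'} > 2\delta$, again contradicting $(v,w) \in \hat{E}$. Thus $\hat{C} \subseteq I$, with exactly one representative per $I_a$. Now for $j \in [d] \setminus \hat{C}$ and $i \in \hat{C}$, Theorem \ref{thm:matrix_product} together with the purity of $i$ gives $\chi(i,j) = A_{j c_i}$: this equals $1$ if $j \in I_{c_i}$, vanishes if $j \in I_b$ with $b \neq c_i$, and is strictly less than $1-2\delta$ when $j$ is non-pure by Condition \ref{cond:SSC1}. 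Consequently $\hat{\chi}_{n,m}(i,j) \geq 1-\delta$ iff $j \in I_{c_i}$, and the sample version of Theorem \ref{thm:identifiability_I}\ref{item_ii:identifiability_I} used inside Algorithm \ref{alg:pure_variable} places $j$ in $\hat{I}^{(i)}$ precisely when $j \in I_{c_i}$, so $\hat{I}^{(i)} = I_{c_i}$. Taking the union over $i \in \hat{C}$ yields $\hat{I} = I$, and part \ref{thm:stat_K_I_3} follows by choosing $\pi$ to be the label permutation induced by the algorithm's enumeration of $\hat{C}$.

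The main obstacle is part \ref{thm:stat_K_I_1}: controlling the size of an arbitrary clique in the noisy graph $\hat{G}$, since $\hat{E}$ contains $E$ but generally has more edges. We meet it via the pigeonhole argument on the dominant cluster $c_v$, which crucially relies on the scaling Condition \ref{cond:(i)} to guarantee $A_{v c_v} \geq 1/K$ and so blocks spurious cliques as soon as $\delta < 1/(2K)$; once that is in place, Condition \ref{cond:SSC2} rules out non-pure vertices in the clique and Condition \ref{cond:SSC1} separates the threshold for declaring pure-variable adjacency in the refinement step.
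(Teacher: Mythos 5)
Your overall architecture (work on the event $\mathcal{E}$ of \eqref{eq:mathcal_E}, get its probability from Theorem \ref{thm:exp_ineq}, show pure representatives form a clique, exclude non-pure vertices, then identify each $\hat{I}^{(i)}$ with an $I_a$ via the $1-\delta$ threshold and Condition \ref{cond:SSC1}) is the same as the paper's, and that last identification step is exactly the paper's Lemma \ref{lem:stat_gua_K_I}. The problem is the step you yourself flag as the crux: bounding the clique size by a pigeonhole on the dominant entry $c_v$, which needs $A_{vc_v}\geq 1/K > 2\delta$, i.e.\ the extra hypothesis $\delta < 1/(2K)$. That hypothesis is not part of the theorem, and you cannot get it for free by saying ``$K$ is fixed'': in this paper both $d$ and $K$ are allowed to grow with $n$, the statement is non-asymptotic, and $\delta$ in \eqref{eq:theoretical_delta} contains the bias term $d_m$, which does not vanish for finite block size $m$. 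So for $K$ of order larger than $1/\delta$ your comparison $1/K > 2\delta$ fails, and with it both the injectivity of $v\mapsto c_v$ in part \ref{thm:stat_K_I_1} and the argument excluding non-pure vertices in part \ref{thm:stat_K_I_2} (which reuses $A_{wb'}\geq 1/K > 2\delta$). In effect you have proved the theorem under an additional strong-signal condition that the statement does not contain.

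The paper's proof avoids any relation between $\delta$ and $1/K$ by leaning entirely on Condition \ref{cond:SSC2}: if a clique of the sample graph contains a non-pure vertex $i$, then $i$ has two factors $a^*,b^*$ with $A_{ia^*},A_{ib^*}>2\delta$, so on $\mathcal{E}$ every pure variable of $I_{a^*}\cup I_{b^*}$ satisfies $\hat{\chi}_{n,m}(i,\cdot)>\delta$ and is excluded from that clique; moreover two non-pure clique members cannot share such a factor (their extremal correlation would exceed $2\delta$), so $r$ non-pure members exclude at least $2r$ distinct factors from pure representation, forcing the clique to have size at most $K-r$. Since one pure representative per factor already gives a clique of size $K$ (your lower bound, identical to the paper's), every maximum clique has size exactly $K$ and consists of one pure variable per factor, after which your treatment of $\hat{I}^{(i)}$, the merge step, and the permutation $\pi$ goes through unchanged. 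Replacing your pigeonhole by this blocking/counting argument closes the gap and recovers the theorem under its stated hypotheses.
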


\subsection{Statistical guarantees for $\hat{A}$}
\label{subsec:stat_guarantees_A}

In this section, we present and discuss the statistical properties of the estimator $\hat{A}$ and its control over the relationship between the support of $A$ and the support of $\hat{A}$. It is worth noting that $\delta$ is defined in Equation \eqref{eq:mathcal_E} and the estimator of $A$ relies solely on this tuning parameter. Theorem \ref{thm:stat_A} established an adaptative finite sample upper bound for exponentially decaying $\alpha$-mixing observations. Both $d$ and $K$ are allowed to grow with $n$. We consider the loss function for two $d \times K$ matrices $A,A'$ as
\begin{equation}
    \label{eq:loss_function_chap_4}
    L_2(A,A') := \underset{P \in S_K}{ \min} ||AP - A'||_{\infty,2}
\end{equation}
where $S_K$ is the group of all $K \times K$ permutation matrices and
\begin{equation*}
    ||A||_{\infty,2} := \underset{1\leq j\leq d}{\max} ||A_{j\cdot}||_2 = \underset{1\leq j\leq d}{\max} \left( \sum_{j=1}^K |A_{ij}|^2 \right)^{1/2};
\end{equation*}
for a generic matrix $A \in \mathbb{R}^{d \times K}$. Finally given $\delta$ in \eqref{eq:theoretical_delta}, we define 

\begin{equation}
    J_1 =\{ j \in J : \textrm{for any $a \in [K]$ with $A_{ja} \neq 0$}, A_{ja} > 2 \delta \}.
\end{equation}

\begin{theorem}
    \label{thm:stat_A}
    Assume the conditions in Theorem \ref{thm:stat_K_I} hold. Set $s = \underset{j \in [d]}{\max} ||A_{j\cdot}||_0$, $s(j) = \sum_{a=1}^K \mathds{1}_{ \{ A_{ja} > 0 \} }$ and $t(j) = \sum_{a=1}^K \mathds{1}_{ \{ A_{ja} \leq 2 \delta \} }$ for each $j \in J$. Then for the estimator $\hat{A}$ the following holds.
    \begin{enumerate}[label=\textcolor{frenchblue}{\bf(\alph*)}]
        \item \label{item_i:thm_stat_A_upper_bound} An upper bound:
        \begin{equation*}
        L_2(\hat{A},A) \leq 4 \sqrt{s} \delta,
    \end{equation*}
    \item \label{item_ii:thm_stat_A_supp_recov} A guarantee for support recovery:
    \begin{equation*}
        \normalfont{supp}(A_{J_1}) \subseteq \normalfont{supp}(\hat{A}) \subseteq \normalfont{supp}(A),
    \end{equation*}
    \item \label{item_iii:thm_stat_A_clust_recov} Cluster recovery:
    \begin{align*}
    	&TFPP(\widehat{\mathcal{G}}) = \frac{\sum_{j\in [d], a \in [K]} \mathds{1}_{\{ A_{ja} = 0, \hat{A}_{ja} > 0 \}}}{\sum_{j\in [d], a \in [K]} \mathds{1}_{\{ A_{ja} = 0\}}} = 0, \\ &TFNP(\widehat{\mathcal{G}}) = \frac{\sum_{j\in [d], a \in [K]} \mathds{1}_{\{ A_{ja} > 0, \hat{A}_{ja} = 0 \}}}{\sum_{j\in [d], a \in [K]} \mathds{1}_{\{ A_{ja} > 0\}}} \leq \frac{\sum_{ j \in J \setminus J_1} t(j)}{|I| + \sum_{j\in J} s(j)},
    \end{align*}
    \end{enumerate}
     with probability larger than $1-d^{-c_0}$ for a positive constant $c_0$.
\end{theorem}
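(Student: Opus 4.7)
The plan is to carry out the entire argument on the high-probability event $\mathcal{E} \cap \{\hat{K}=K,\ \hat{I}=I,\ \hat{\mathcal{I}}=\mathcal{I}\}$, which by Theorem~\ref{thm:stat_K_I} together with~\eqref{eq:mathcal_E} has probability at least $1-d^{-c_{0}}$. On this event I fix once and for all the label permutation $\pi$ produced by Algorithm~\ref{alg:pure_variable} so that $\hat{I}_{a}=I_{a}$ for every $a$. Then the construction~\eqref{eq:estimator_A_I} forces $\hat{A}_{i\cdot}$ to coincide exactly with the pure indicator row $A_{i\cdot}$ for every $i\in I$, so the whole analysis reduces to controlling $\hat{A}_{j\cdot}$ for $j\in J$.

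The key identity unlocking the non-pure rows is that, since $\mathcal{X}_{IJ}=A_{I}\odot A_{J}^{\top}$ and every row of $A_{I}$ is a standard basis vector, $\chi(i,j)=A_{ja}$ for every $i\in I_{a}$ and $j\in J$. Averaging $\hat{\chi}_{n,m}(i,j)$ over $i\in \hat{I}_{a}=I_{a}$ on $\mathcal{E}$ yields $|\bar{\chi}^{(j)}_{a}-A_{ja}|\leq \delta$ for all $a$. From this I extract the two deterministic implications used throughout: $A_{ja}=0\Rightarrow a\notin S_{j}$ and $A_{ja}>2\delta\Rightarrow a\in S_{j}$, where $S_{j}:=\mathrm{supp}(\bar{\beta}^{(j)})$. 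In particular $\{a:A_{ja}>2\delta\}\subseteq S_{j}\subseteq \mathrm{supp}(A_{j\cdot})$. This immediately yields part (b): the simplex projector in~\eqref{eq:estimator_A_J} can only shrink entries of $\bar{\beta}^{(j)}|_{S_{j}}$, giving the inclusion $\mathrm{supp}(\hat{A})\subseteq \mathrm{supp}(A)$; and for $j\in J_{1}$ one has $r:=\sum_{a\in T\setminus S_{j}}A_{ja}=0$, whence the Lagrange shift satisfies $|\tau|\leq \delta$ while each entry of $\bar{\beta}^{(j)}|_{S_{j}}$ exceeds $\delta$, so no clipping occurs and $\mathrm{supp}(A_{J_{1}})\subseteq \mathrm{supp}(\hat{A})$.

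For part (a), I would decompose
\begin{equation*}
\|\hat{A}_{j\cdot}-A_{j\cdot}\|_{2}^{2}=\|\hat{\beta}^{(j)}|_{S_{j}}-A_{j\cdot}|_{S_{j}}\|_{2}^{2}+\|A_{j\cdot}|_{T\setminus S_{j}}\|_{2}^{2},
\end{equation*}
bound the tail via $A_{ja}\leq 2\delta$ on $T\setminus S_{j}$ to obtain $\|A_{j\cdot}|_{T\setminus S_{j}}\|_{2}^{2}\leq 2\delta r$, and handle the first term by exploiting non-expansivity of Euclidean projection onto the convex simplex $\Delta_{|S_{j}|-1}$. Picking the explicit auxiliary point $w_{a}=A_{ja}+r/|S_{j}|$, which lies in $\Delta_{|S_{j}|-1}$ because $A_{j\cdot}\geq 0$ and $\sum_{a\in S_{j}}A_{ja}=1-r$, gives $\|\hat{\beta}^{(j)}|_{S_{j}}-w\|_{2}\leq \|\bar{\beta}^{(j)}|_{S_{j}}-w\|_{2}$, and a triangle inequality together with $\|\bar{\beta}^{(j)}|_{S_{j}}-A_{j\cdot}|_{S_{j}}\|_{2}\leq \sqrt{|S_{j}|}\delta$ and $\|w-A_{j\cdot}|_{S_{j}}\|_{2}=r/\sqrt{|S_{j}|}$ delivers a row-wise bound. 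Using $r\leq 2\delta(s(j)-|S_{j}|)$ to convert the $r^{2}/|S_{j}|$ and $\delta r$ terms into $O(\delta^{2}s(j))$ yields $\|\hat{A}_{j\cdot}-A_{j\cdot}\|_{2}\leq 4\sqrt{s(j)}\,\delta$; taking the maximum over $j$ under the fixed permutation $\pi$ produces $L_{2}(\hat{A},A)\leq 4\sqrt{s}\,\delta$.

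Part (c) is then almost a corollary. The identity $\mathrm{supp}(\hat{A})\subseteq \mathrm{supp}(A)$ makes TFPP vanish. For TFNP, rows $i\in I$ contribute nothing because $\hat{A}_{i\cdot}=A_{i\cdot}$, and rows $j\in J_{1}$ contribute nothing by part (b). A false negative $(j,a)$ with $j\in J\setminus J_{1}$ has $A_{ja}>0$ together with either $a\notin S_{j}$ (forcing $A_{ja}\leq 2\delta$) or $a\in S_{j}$ clipped by the simplex step; both cases are counted by $\mathds{1}_{\{A_{ja}\leq 2\delta\}}$ whose row-sum is $t(j)$, yielding the stated ratio. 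The main obstacle I anticipate is precisely the constant-sharp step in part (a): keeping the multiplicative constant equal to $4$ forces a delicate balance between the projection error $\sqrt{|S_{j}|}\delta$ and the mass-redistribution error $r/\sqrt{|S_{j}|}$, and requires the structural observation that $r$ can be large only when many entries of $A_{j\cdot}$ sit below $2\delta$, which is exactly what bounds $(s(j)-|S_{j}|)$ from above and absorbs $r^{2}/|S_{j}|$ into $O(s(j)\delta^{2})$.
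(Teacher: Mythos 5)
Your parts (b) and the TFPP half of (c) follow the paper's route in substance (the implication $\beta^{(j)}_a=0\Rightarrow\bar\beta^{(j)}_a=0$ is the paper's Lemma \ref{lem:stat_A}, and the no-clipping argument for $j\in J_1$ mirrors the paper's computation $\tau\le\delta<\bar\beta^{(j)}_a$). The genuine gap is in part (a), in the very step you flag as the "main obstacle". Your row-wise bound is, with $q=|S_j|$ and $r=\sum_{a\in\mathrm{supp}(A_{j\cdot})\setminus S_j}A_{ja}$, of the form $\sqrt{q}\,\delta+2r/\sqrt{q}$ plus a tail term $\sqrt{2\delta r}$, and you then invoke $r\le 2\delta\,(s(j)-q)$ to claim everything is $O(\delta^2 s(j))$. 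That conversion fails: $r^2/q\le 4\delta^2 (s(j)-q)^2/q$ is not $O(\delta^2 s(j))$ when $q\ll s(j)$. Concretely, take two entries of $A_{j\cdot}$ above $2\delta$ and $u$ entries just below $2\delta$ that are all thresholded out (which is compatible with \ref{cond:SSC} and with the event $\mathcal{E}$, since $\bar\chi^{(j)}_a$ may sit just below $\delta$ for those coordinates); then $q=2$, $r\approx 2\delta u$, and your bound is of order $\delta u$, not $\delta\sqrt{u}$, so the stated constant $4\sqrt{s(j)}\,\delta$ does not follow from your intermediate inequalities. The paper avoids this bookkeeping entirely: it bounds $\|\hat\beta^{(j)}-\bar\beta^{(j)}\|_2\le\|\beta^{(j)}-\bar\beta^{(j)}\|_2$ by using $\beta^{(j)}|_{\widehat{\mathcal{S}}}$ itself as the comparison point for the simplex projection, then combines $\|\bar\beta^{(j)}-\beta^{(j)}\|_\infty\le2\delta$ with Lemma \ref{lem:stat_A} (support of the difference has size $s(j)$) to get $2\sqrt{s(j)}\,\delta$, and doubles it via the triangle inequality. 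Your auxiliary point $w=A_{j\cdot}|_{S_j}+(r/q)\mathbf{1}$ is a legitimate feasible point, but it makes the mass-redistribution error $r/\sqrt{q}$ explicit, and with only $r\le2\delta(s(j)-q)$ available this term cannot be absorbed into $\sqrt{s(j)}\,\delta$; to recover the theorem's constant you must either argue as the paper does or establish a genuinely stronger bound on $r$ than the one you use.

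A second, smaller gap is in your TFNP argument: for $j\in J\setminus J_1$ you split false negatives into "$a\notin S_j$" and "$a\in S_j$ but clipped by the simplex step" and assert both are counted by $\mathds{1}_{\{A_{ja}\le2\delta\}}$, but you give no justification for the clipped case. The paper closes this by proving $\mathrm{supp}(\hat\beta^{(j)})=\mathrm{supp}(\bar\beta^{(j)})$ for every $j$, via $\tau\le\frac{1}{q}\bigl(\sum_{a\in S_j}\bar\beta^{(j)}_a-1\bigr)\le\delta<\bar\beta^{(j)}_a$, an argument that only uses $\bar\beta^{(j)}_a\le A_{ja}+\delta$ and $\sum_a A_{ja}=1$, not $r=0$; you establish this only for $j\in J_1$. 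The fix is easy (run the same $\tau\le\delta$ computation for all $j$, which shows the clipped case is vacuous), but as written the step is unsupported.
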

Some comments on the above results are in order. On a high level, larger dimensions $d$, larger values of $d_m$ lead to a larger bound. The effects of dimension $d$ and bias $d_m$ are intuitive: larger dimensions or more bias make the matrix recovery problem more difficult. The dependence on the number of latent factors $K$ is implicitly conveyed through the sparsity index $s$, and its impact on the bound is also straightforward: the sparser the matrix, the better the bound. Additionally, we have demonstrate that $\hat{A}$ possesses another desirable property, namely variable selection, or exact recovery sparsity pattern for row $j \in J_1$.

For a more quantitative analysis in increasing dimensions, assume there exists a coefficient $\rho_{\Psi} > 0$ and a continuous non-zero function $S$ on $[0,1]^2$ such that for any pair $(a,b) \in \{1,\dots,d\}^2$, the following holds:
    \begin{equation*}
        C_m^{(a,b)}(\textbf{u}) - C_\infty^{(a,b)}(\textbf{u}) = \frac{1}{m^{\rho_{\Psi}}}S(\textbf{u}) + o\left(\frac{1}{m^{\rho_{\Psi}}}\right), \quad  \mbox{ for } m \rightarrow \infty,
    \end{equation*}
    uniformly for $\mathbf{u} \in [0,1]^2$ (see, e.g., \cite{bucher2019second} for a detailed introduction to this condition). Here, $C_m^{(a,b)}$ and $C_\infty^{(a,b)}$ denote the bivariate copula of maxima and the bivariate extreme value copula, respectively. In this case, it can be shown that $d_m = O(m^{-\rho_{\Psi}})$, where the $O$-term is independent of $d$. Thus, for sufficiently large $n$, and simplifying the expression by omitting logarithmic terms in $k$ without altering the main conclusion, Theorem \ref{thm:stat_A} yields the following upper bound:
    \begin{equation*}
    	L_2(\hat{A},A) \leq c_1 \sqrt{s}\left(\left(\frac{k}{n}\right)^{\rho_\Psi} + \sqrt{\frac{\ln(d)}{k}} \right),
    \end{equation*}
    where $c_1 > 0$ is an absolute constant. Straightforward calculations lead to an optimal $k^* = c_{\rho_{\Psi}} n^{\frac{2\rho_{\Psi}}{2\rho_{\Psi}+1}} (\ln(d))^{\frac{1}{2\rho_{\Psi}+1}}$, where $c_{\rho_{\Psi}}$ is a constant dependent on $\rho_{\Psi}$, but independent of $d$. Substituting this optimal $k^*$ into the bound, we obtain:
    \begin{equation*}
    	L_2(\hat{A},A) \leq c_{\rho_{\Psi}} \sqrt{s} \left( \frac{\ln(d)}{n} \right)^{\frac{\rho_{\Psi}}{2\rho_{\Psi}+1}}.
    \end{equation*}
    Thus, the estimation procedure remains accurate as long as $\ln(d) = o(n)$.
    
To the best of our knowledge, the estimation of identifiable sparse loading matrices $A$ and overlapping clusters (or termed equivalently as extreme directions) in the model \eqref{eq:lfm}, meeting Conditions \ref{cond:(i)}-\ref{cond:(ii)}, when both $I$ and $K$ are unknown and when $d$ and $n$ are varying, has not been yet explored in existing literature. Our results fill this gap in the research landscape. 
\section{Numerical results}
\label{sec:num_res}
\subsection{A data-driven selection method for the tuning parameter}
\label{subsec:data_driven}

Theorems \ref{thm:stat_K_I}-\ref{thm:stat_A} outline the theoretical rate of $\delta$, but only up to constants. Below, we propose a method for selecting $\delta$ based only on data. We opt for a finely tuned grid of values $\delta_\ell = c_\ell (d_m + \sqrt{\ln(d)/k})$ with $1 \leq \ell \leq M$ as suggested by \eqref{eq:theoretical_delta} for $k$ sufficiently large and omitting log factors in $k$. This selection process involves varying the proportionality constants $c_\ell$. For each $\delta_\ell$ chosen, we determine the number of clusters $\hat{K}(\ell)$ using Algorithm \ref{alg:clust}, and the corresponding matrix, $\hat{A}(\ell)$. We denote the associated overlapping clusters as $\hat{G}(\ell) = \{\hat{G}_1(\ell), \dots, \hat{G}_{\hat{K}(\ell)}(\ell)\}$. Define
\begin{equation}
	\label{eq:crit_tuning}
	\mathcal{L}(\hat{G}(\ell)) = \sum_{j=1}^d \sum_{a \in [\hat{K}(\ell)]} \left(\hat{A}_{ja}(\ell) - \hat{\chi}_{n,m}(j,a) \mathds{1}_{ \{ \hat{A}_{ja}(\ell) > 0 \} } \right)^2.
\end{equation}
We then proceed to select $\delta^\star$ as the value of $\delta_\ell$ that minimises the criteria in \eqref{eq:crit_tuning}. This leads to data-driven selection of $\delta^\star = c^\star (d_m + \sqrt{\ln(d)/k})$. While our choice may not be optimal in certain challenging scenarios, it provides effective data-based guidelines for our comparative analysis in Section \ref{subsec:num_comp} and real-world data evaluation in Section \ref{sec:applications}.

\subsection{Performance of the proposed methodology in finite sample setting}

\label{subsec:num_res}

In this section, we investigate the finite-sample performance of our algorithm to estimate the matrix $A$ in a linear factor model described in \eqref{eq:lfm} by means of a simulation study.

\paragraph{The setup.} As a time series model, we consider the discrete-time, $d$-variate moving maxima process $(\textbf{Z}_t, t \in \mathbb{Z})$ of order $p \in \mathbb{N}$ given by
\begin{equation}
    \label{eq:moving_maxima}
    Z_t^{(a)} = \bigvee_{\ell = 0}^p \rho^\ell \epsilon_{t + \ell}^{(a)}, \quad (t \in \mathbb{Z}, a=1,\dots,K), \quad \rho \in (0,1).
\end{equation}
Here $(\boldsymbol{\epsilon}_t, t \in \mathbb{Z})$ is an i.i.d. sequence of $K$-dimensional random vectors having the following distribution
\begin{equation*}
    \mathbb{P}\left\{ \boldsymbol{\epsilon}_1 \leq \mathbf{X} \right\} = \varphi^\leftarrow \left( \varphi(P(x^{(1)})) + \dots + \varphi(P(x^{(K)})) \right), \quad \mathbf{X} \in \mathbb{R}^K,
\end{equation*}
where $\varphi$ is the Archimedean generator $\varphi : [0,1] \rightarrow [0,\infty]$, $\varphi^\leftarrow$ its generalised inverse and $P(x) = 1-1/x$ for $x \geq 1$ is the cumulative distribution function of a standard Pareto random variable. We assume the existence of the limit of 
\begin{equation}
    \label{eq:rv}
    \underset{s \rightarrow 0}{\lim} \,\frac{\varphi(1-st)}{\varphi(1-s)} = t, \quad t \in (0,\infty),
\end{equation}
i.e., the upper tail exhibits asymptotic independence. In \ref{sec:proof_num_res} (as seen in Proposition \ref{prop:dma}), we demonstrate that the maxima of $(\textbf{Z}_t, t \in \mathbb{Z})$ belong to the maximum domain of attraction of independent Fréchet distributions. Consequently, the related process verifies the max-domain of attraction \eqref{eq:DMA}
\begin{equation}
    \label{eq:model_num_res}
    \mathbf{X}_t = A \textbf{Z}_t + \textbf{E}_t,
\end{equation}
where $\textbf{E}_t$ represents independent and identically distributed replications of a lighter tail distributions. Noteworthy, since $(\textbf{Z}_t, t \in \mathbb{Z})$ is $p$-dependent, the overall process $(\mathbf{X}_t, t \in \mathbb{Z})$ is $\alpha$-mixing.

We generate the data in the following way. We set the number of clusters $K$ to be $20$ and simulate the latent variables $Z = (Z^{(1)},\dots, Z^{(K)})$ from \eqref{eq:moving_maxima} and $\varphi = (t^{-1}-1)$. The error terms $E^{(1)}_i,\dots,E^{(d)}_i$ are independently generate from a standard normal distribution. To speed up the simulation process, we set the first $20$ rows of $A$ as pure variables. This means these rows are predetermined and do not change during the process so that the best permutation matrix that achieves the best estimation error is the identity. To generate $A_J$, for any $j \in J$, we randomly assign the cardinality $s_j$ of the support $A_{j\cdot}$ to a number in $\{2,3,4\}$, with equal probability. Then, we randomly select the support from the set $\{1,2,\dots,K\}$ with a cardinality equal to $s_j$. We then sample $U_1,\dots,U_{s_j}$ uniformly over the segment $[0.35, 0.65]$. Finally, we assign the value of variables in the support as the corresponding sampled value divided by the sum of all sampled values for variables in the support. Thus, we can generate $\mathbf{X}$ according to the model in \eqref{eq:model_num_res} and setting $\rho = 0.8$ and $p = 2$.

\paragraph{The target values.} Our simulation study aims at investigating the performance of our algorithm to recover the number of latent variables $K$ and the performance of $\hat{A}$ as estimators of $A$. When the number of clusters is correctly identified, we compute the norm $L_2(\hat{A},A)$ in \eqref{eq:loss_function_chap_4}.

\paragraph{Calibrating parameters.} In practice, based on Equation \eqref{eq:theoretical_delta}, we recommend the following choice, for sufficiently large $k$, and omitting logarithmic terms in $k$ with $d_m = c_2/m$ as a rule of thumb (see \ref{sec:proof_num_res} for technical details of this heuristic):
\begin{equation*}
\delta = \frac{c_2}{m} + c_1 \sqrt{\frac{\ln(d)}{k}},
\end{equation*}
and set $c_1 = 1.2$ and $c_2 = 1.0$ in Algorithm \ref{alg:clust}. We have found that these choices for $c_1$ and $c_2$ not only yield good overall performance but are also robust with respect to the dimension $d$, the block's length $m$ and the numbler of blocks $k$.

\paragraph{Results and discussion.} Figure \ref{fig:num_res_lfm}, Panel \subref{subfig:exact_recov_rate_k} present simulation results on exact recovery rate of number of clusters $K$ and estimation error $L_2(\hat{A},A)$ in the case of a fixed $m=15$ and with varying $k \in \{300,500,700,1000\}$. The shape of the functions are as expected; the estimation error decreases when the number of blocks $k$ increases from $300$ to $1000$, which is in line with our theoretical results. The simulations are conducted on an Ubuntu system version 22.04 with 2.5 GHz Intel Core i7 CPU and 32 GB memory. Even with $d = 1000$ and $k = 1000$, the computing time of our method for each simulation is around $5$ seconds. In Figure \ref{fig:num_res_lfm}, Panel \subref{subfig:exact_recov_rate_m}, we present simulation results on exact recovery rate of the number of cluster $\hat{K}$ and estimation error $L_2(\hat{A}, A)$ with a fixed sample size $n = 5000$. We plot these two quantities against the number of blocks $k$. From the picture, we see that, as expected, the performance of the estimator is first decreasing in $k$ while it increases after a certain threshold. This phenomenom is an illustration of the bias-variance tradeoff that runs is the choice of the corresponding block length. Regarding the exact recovery rate and the performance of the estimator, we observe a rather good performance for a value of $k = 700$, corresponding to block length $m = 7$.

\begin{figure}[!h]
\begin{minipage}{.5\linewidth}
    \centering
    \subfloat[]{\label{subfig:exact_recov_rate_k}\includegraphics[width=\textwidth, height=0.3\textheight]{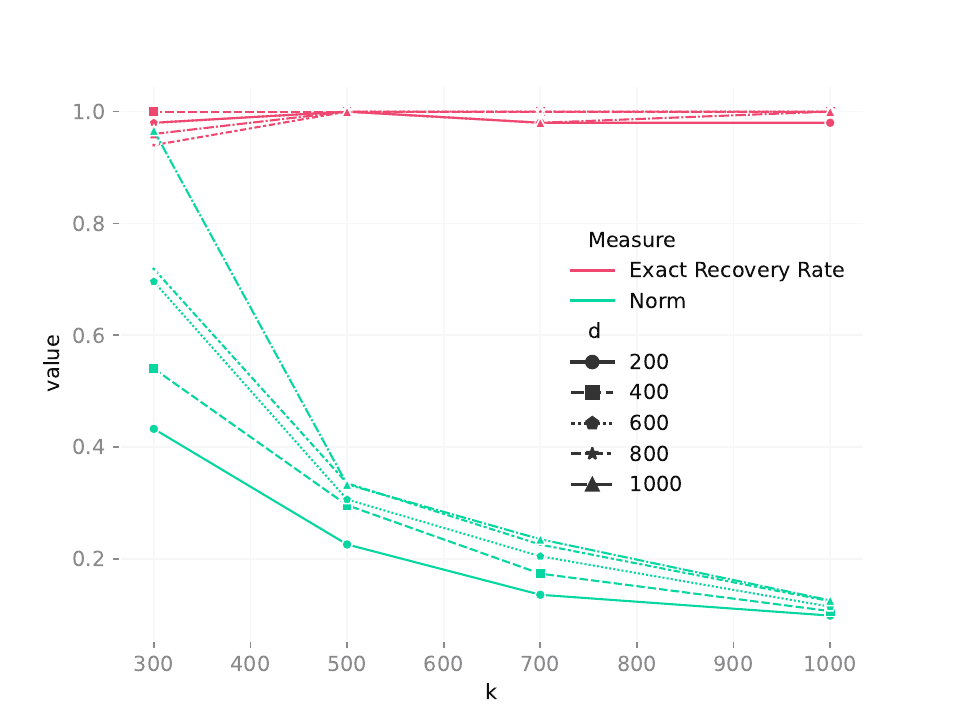}}
\end{minipage}%
\begin{minipage}{.5\linewidth}
    \centering
    \subfloat[]{\label{subfig:exact_recov_rate_m}\includegraphics[width=\textwidth, height=0.3\textheight]{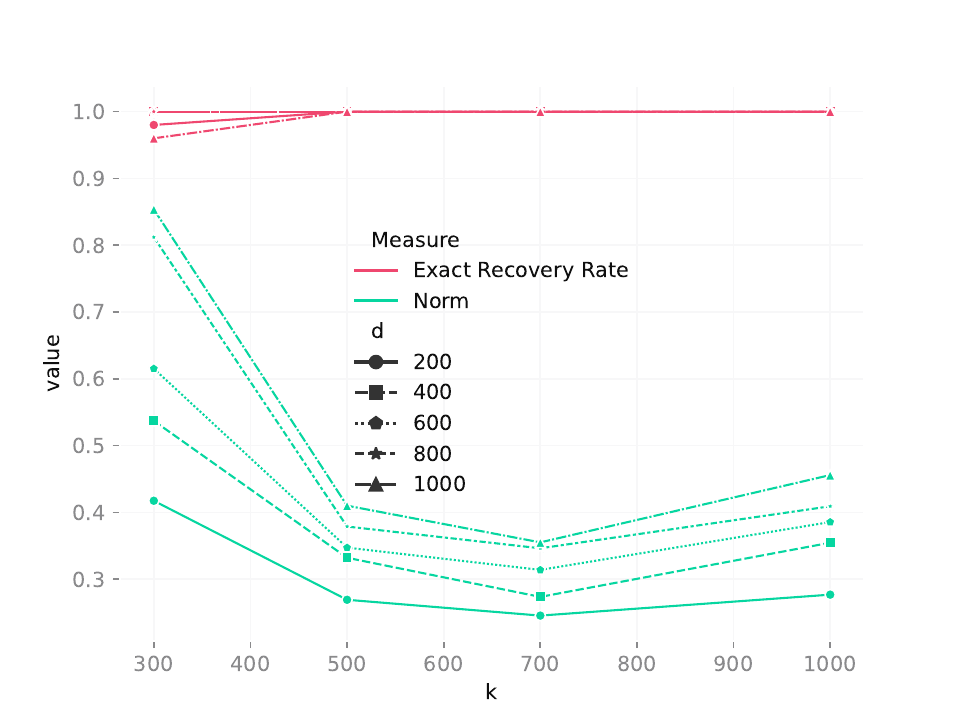}}
\end{minipage}

\caption{In Panel \subref{subfig:exact_recov_rate_k}, simulation results on exact recovery rate of number of clusters $K$ (in red) of $\mathcal{I}$ and $L_2(\hat{A},A)$ (in green) with fixed $m=20$ and varying $d \in \{200,400,600,800,1000\}$. In Panel \subref{subfig:exact_recov_rate_m}, simulations results on exact recovery rate (in blue) of $\mathcal{I}$ and $L_2(\hat{A},A)$ (in red) for fixed $n = 5000$ and varying number of blocks and $d \in \{200,400,600,800,1000\}$.}
\label{fig:num_res_lfm}
\end{figure}

Finally, within Table \ref{tab:values_k_d}, we display the parameter $\delta^\star$ which minimises the average of the criteria in \eqref{eq:crit_tuning} using the data-driven selection method proposed in Section \ref{subsec:data_driven} over $50$ runs with different numbers of block maxima $k$ and dimensions $d$. Additionally, we give the average ERR over these $50$ runs. These results provide support for this criterion in selecting $\delta$, yielding favorable outcomes in terms of ERR. However, it is also evident that this criterion may not be optimal in certain difficult scenarios, particularly for large values of $d$ and small values of $k$.
\begin{table}[htbp]
    \centering
    \begin{tabular}{c|ccccc}
        \toprule
        \( k \) / \( d \) & \multicolumn{5}{c}{\(c^\star\) / ERR} \\
        \midrule
        & 200 & 400 & 600 & 800 & 1000 \\
        \midrule
        300 & (\(1.2\), 1) & (\(1.13\), 1) & (\(1.05\), 0.98) & (\(1\), 0.92) & (\(0.985\), 0.86) \\
        500 & (\(1.41\), 1) & (\(1.38\), 1) & (\(1.3\), 1) & (\(1.28\), 1) & (\(1.29\), 1) \\
        700 & (\(1.63\), 1) & (\(1.6\), 1) & (\(1.5\), 1) & (\(1.47\), 1) & (\(1.33\), 1) \\
        1000 & (\(1.83\), 1) & (\(1.64\), 1) & (\(1.74\), 1) & (\(1.69\), 1) & (\(1.35\), 1) \\
        \bottomrule
    \end{tabular}
    \caption{Data-driven selection of $c^\star$ using the average criterion \eqref{eq:crit_tuning} and Exact Recovery Rate (ERR) of latent factors $K = \hat{K}$ over $50$ runs with varying $k \in \{300,500,700,1000\}$ and $d \in \{200,400,600,800,1000\}$.}
    \label{tab:values_k_d}
\end{table}

\subsection{Numerical comparisons}
\label{subsec:num_comp}

In this section, we analyse how well our method performs in recovering extreme directions in contrast to DAMEX (\cite{goix2017sparse}), and its efficiency for estimating normalised columns $A_{\cdot k} / ||A_{\cdot k}|| =: \textbf{a}_k$, $k \in [K]$ compare to sKmeans (\cite{janseen2020clustering}). Other algorithms (namely, \cite{chiapino2019identifying, fomichov2022spherical, meyer2023multivariate}) in the literature were also considered; however, since they suffer from computational weaknesses or yield poor performance where $\mathbf{X}$ is decomposed as a linear factor model described in \eqref{eq:lfm} under Conditions \ref{cond:(i)}-\ref{cond:(ii)}, they are omitted from the presentation of the results. We borrow the identical setup of a moving-maxima process as in Section \ref{subsec:num_res}, we hence moved to an elucidation of the target values. 

\paragraph{Target values.} Our simulation study aims to investigate the performance of our algorithm in determining the number of extreme directons and assessing its performance, when $\hat{K} = K$, using the TFPP and TFNP metrics compared to the DAMEX Algorithm. Additionally, we assess the disparity between the true centroids $\textbf{a}_1,\dots,\textbf{a}_K$ and the estimated centroids $\hat{\textbf{a}}_1,\dots,\hat{\textbf{a}}_K$ as produced by both our procedure and sKmeans when $\hat{K} = K$ by:
\begin{equation}
	\label{eq:metric_comp_skmeans}
	D(\{\textbf{a}_1,\dots,\textbf{a}_K \}, \{ \hat{\textbf{a}}_1,\dots, \hat{\textbf{a}}_K \}) = \underset{\pi}{\min}\, \sqrt{\sum_{k=1}^K ||\hat{\textbf{a}}_{\pi(k)} - \textbf{a}_k ||_2^2},
\end{equation}
where the min is taken over all permutation $\pi$ of $\{1,\dots,K\}$. By the definition of \eqref{eq:metric_comp_skmeans}, the number of factors in the experiment is reduced to $K = 6$ due to memory limitations.

\paragraph{Results and Discussion.} Figure \ref{fig:num_comp}, Panels \subref{fig:err_damex}-\subref{fig:tfpp_damex} depict results on exact recovery rate of the number of clusters $K$, TFNP and TFPP for both Algorithms \ref{alg:clust} and DAMEX over 50 simulations. The exact recovery rate of Algorithm of \ref{alg:clust} is always better than the one of DAMEX for any configurations of $d$ and $n$. 
Moreover, our procedure appears to be more resilient than DAMEX to a decrease in the sample size $n$ and an increase in the dimension $d$. However, when the number of latent factors is correctly retrieved by DAMEX Algorithm, it exhibits a better performance than \ref{alg:clust} in terms of TFNP and TFPP. The average value of $D(\{\textbf{a}_1,\dots,\textbf{a}_K \}, \{ \hat{\textbf{a}}_1,\dots, \hat{\textbf{a}}_K \})$ over 50 realisations for different values of $d$ and $n$ are show in Figure \ref{fig:num_comp}, Panel \subref{fig:centroids_skmeans}. It can be seen that the locations of the points of mass of the spectral measure are most precisely estimated by \ref{alg:clust} Algorithm.

\begin{figure}[ht]
    \centering
    \begin{subfigure}[t]{0.48\linewidth}
        \includegraphics[width=\textwidth, height=0.25\textheight]{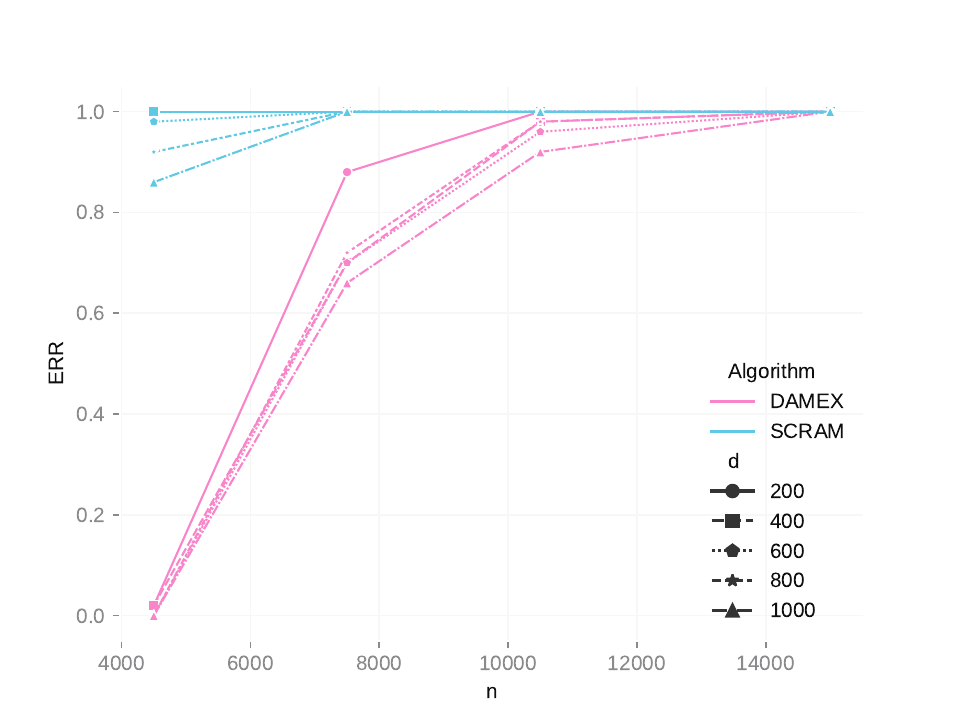}
        \caption{Exact Recovery Rate}
        \label{fig:err_damex}
    \end{subfigure}
    \hfill
    \begin{subfigure}[t]{0.48\linewidth}
        \includegraphics[width=\textwidth, height=0.25\textheight]{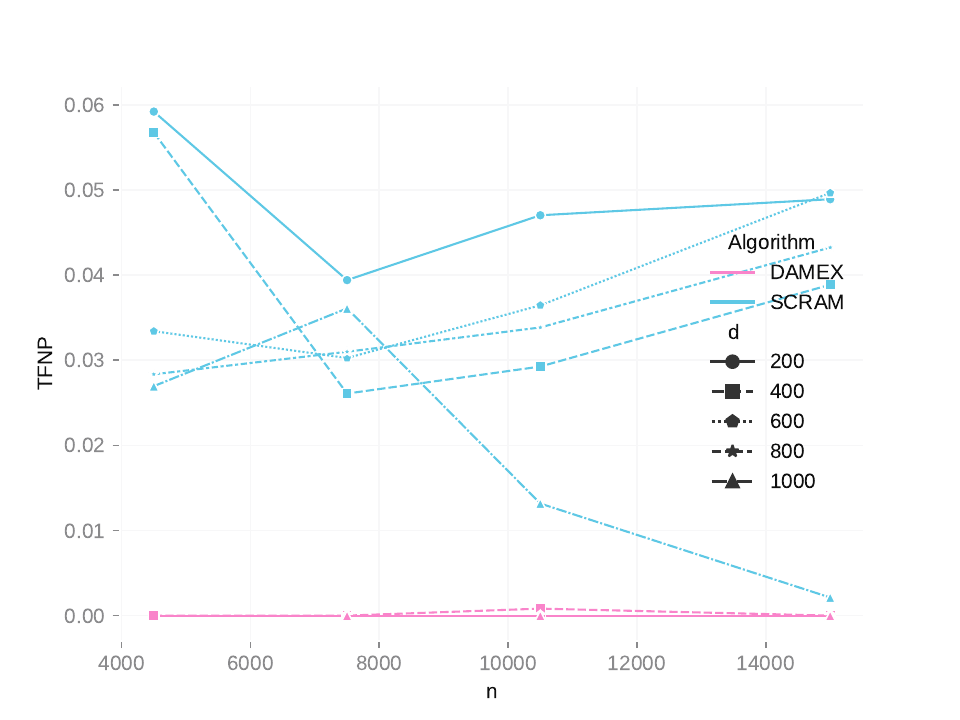}
        \caption{Total False Negative Proportion}
        \label{fig:tfnp_damex}
    \end{subfigure}
    
    \vspace{1em} 

    \begin{subfigure}[t]{0.48\linewidth}
        \includegraphics[width=\textwidth, height=0.25\textheight]{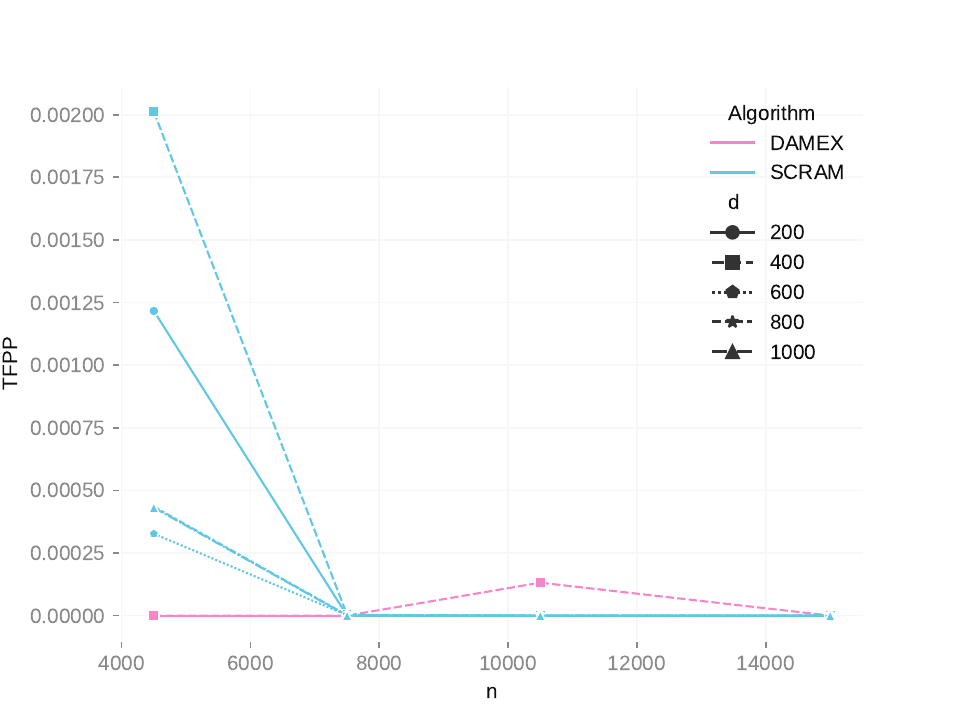}
        \caption{Total False Positive Proportion}
        \label{fig:tfpp_damex}
    \end{subfigure}
    \hfill
    \begin{subfigure}[t]{0.48\linewidth}
        \includegraphics[width=\textwidth, height=0.25\textheight]{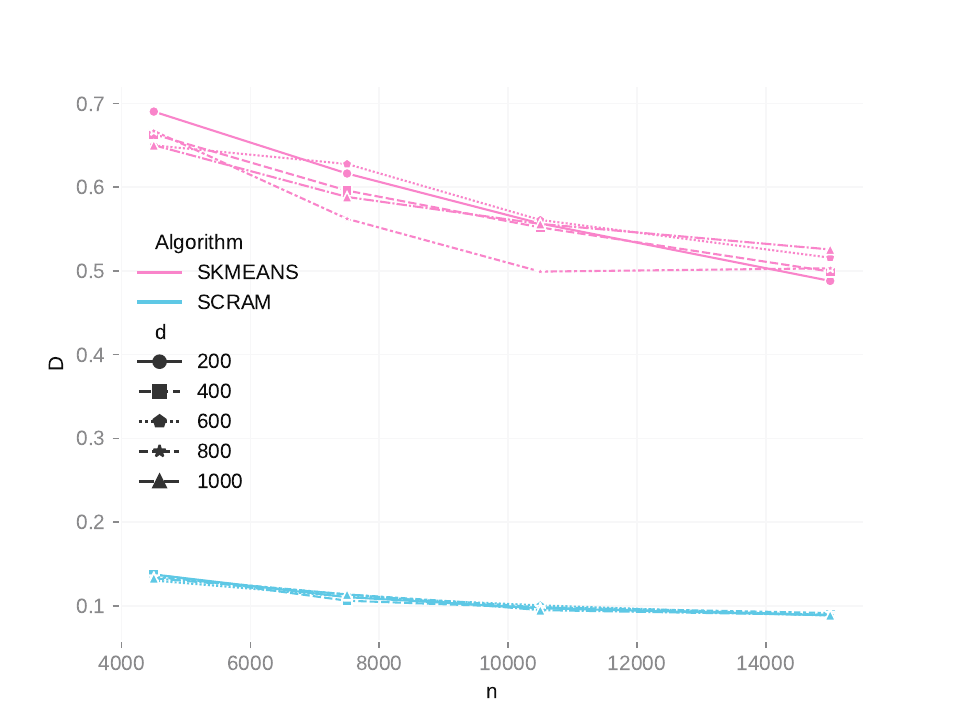}
        \caption{$D(\{\textbf{a}_1,\dots,\textbf{a}_K \}, \{ \hat{\textbf{a}}_1,\dots, \hat{\textbf{a}}_K \})$}
        \label{fig:centroids_skmeans}
    \end{subfigure}
    
    \caption{Results of numerical comparisons between \ref{alg:clust} Algorithm (in blue) and DAMEX Algorithm (in pink) in Panels \subref{fig:err_damex}-\subref{fig:tfpp_damex}. Comparison of $D(\{\textbf{a}_1,\dots,\textbf{a}_K \}, \{ \hat{\textbf{a}}_1,\dots, \hat{\textbf{a}}_K \})$ between \ref{alg:clust} Algorithm (in blue) and sKmeans (in pink) are given in Panel \subref{fig:centroids_skmeans}.}
    \label{fig:num_comp}
\end{figure}

\paragraph{Calibrating parameters.} In \ref{alg:clust}, the parameter $\delta$ is selected using the method proposed in Section \ref{subsec:data_driven}. To fasten computations, we utilise the simulation outcomes detailed in Section \ref{subsec:num_res}, selecting $\delta^\star$ as the threshold that minimises the average value of the criteria across $50$ iterations given in Table \ref{tab:values_k_d}. In the DAMEX Algorithm, we adopt the approach recommended by the authors, selecting the $\lfloor \sqrt{n} \rfloor$ largest values. Due to the propensity of the DAMEX Algorithm to return numerous extreme directions for many $\epsilon$ values (tuning parameter of the DAMEX Algorithm), we opt to merge overlapping directions. This adjustement is crucial for enabling the DAMEX Algorithm to accurately recover the true number of latent factors. Furthermore, we determine $\epsilon$ through trial and error using the exact recovery rate of $K$ (post-merging step) as a benchmark, and settle on $\epsilon =0.3$. Calibrating parameters of the sKmeans algorithm is relatively straightforward. We simply select the $\lfloor \sqrt{n} \rfloor$ largest values and we designate the true number of latent factors $K=6$ (which is typically unknown in practice) as the number of clusters.

\section{Applications}
\label{sec:applications}

\subsection{Extreme precipitations in France}
\label{sec:extreme_precip}
\begin{figure}[ht]
    \centering
    \begin{subfigure}[t]{0.48\linewidth}
        \includegraphics[width=\textwidth]{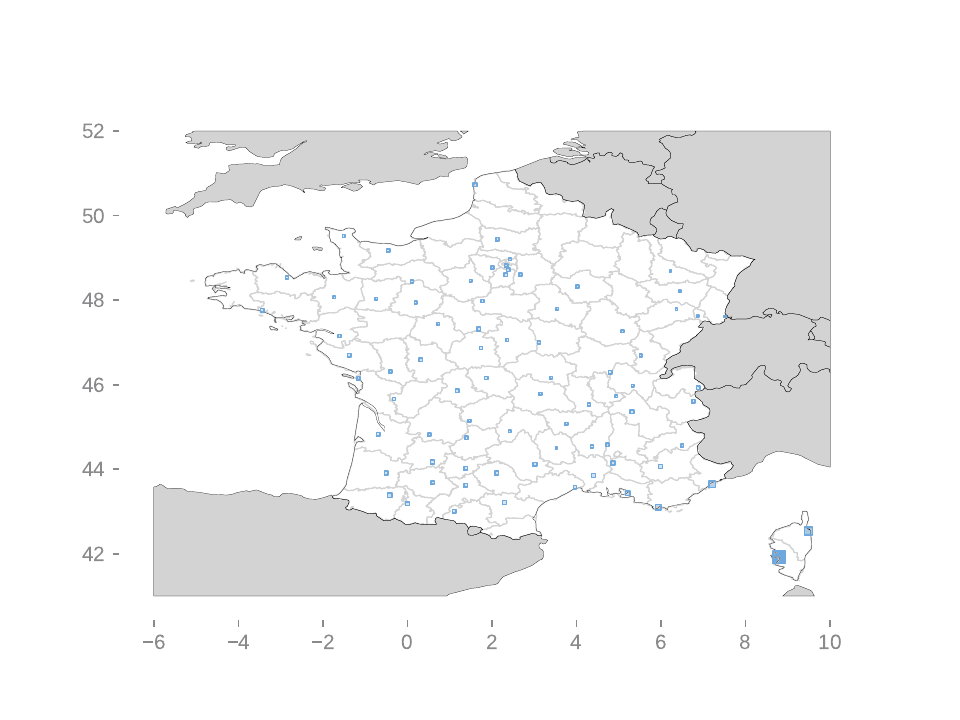}
        \caption{}
        \label{subfig:cluster_corsica}
    \end{subfigure}
    \hspace{0.02\linewidth}
    \begin{subfigure}[t]{0.48\linewidth}
         \includegraphics[width=\textwidth]{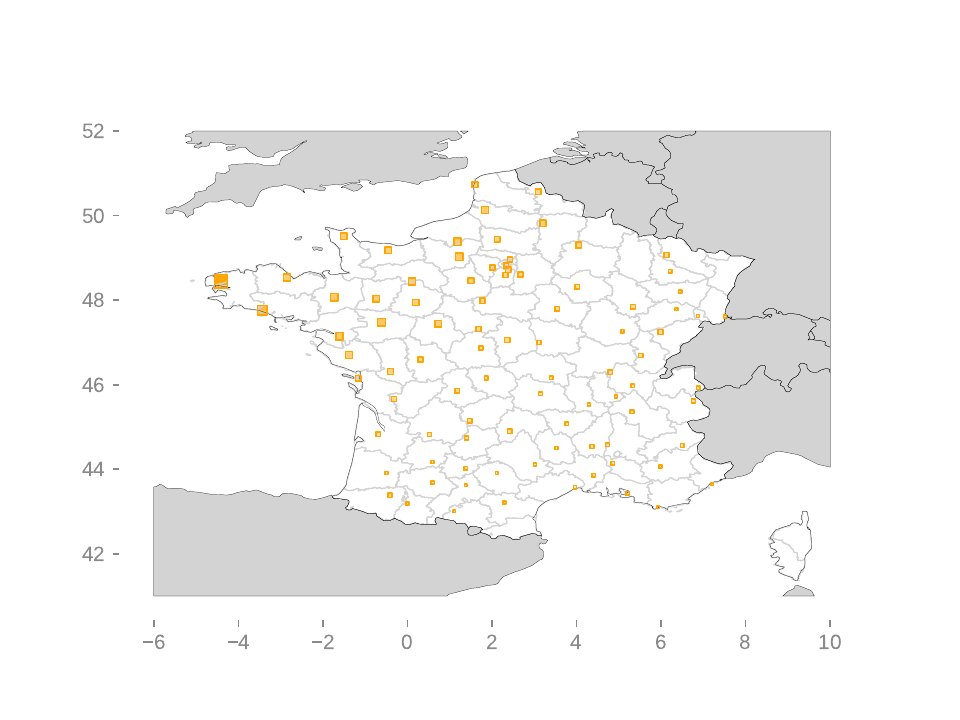}
        \caption{}
        \label{subfig:cluster_west}
    \end{subfigure}
    
    \vspace{0.5em} 

    \begin{subfigure}[t]{0.48\linewidth}
       \includegraphics[width=\textwidth]{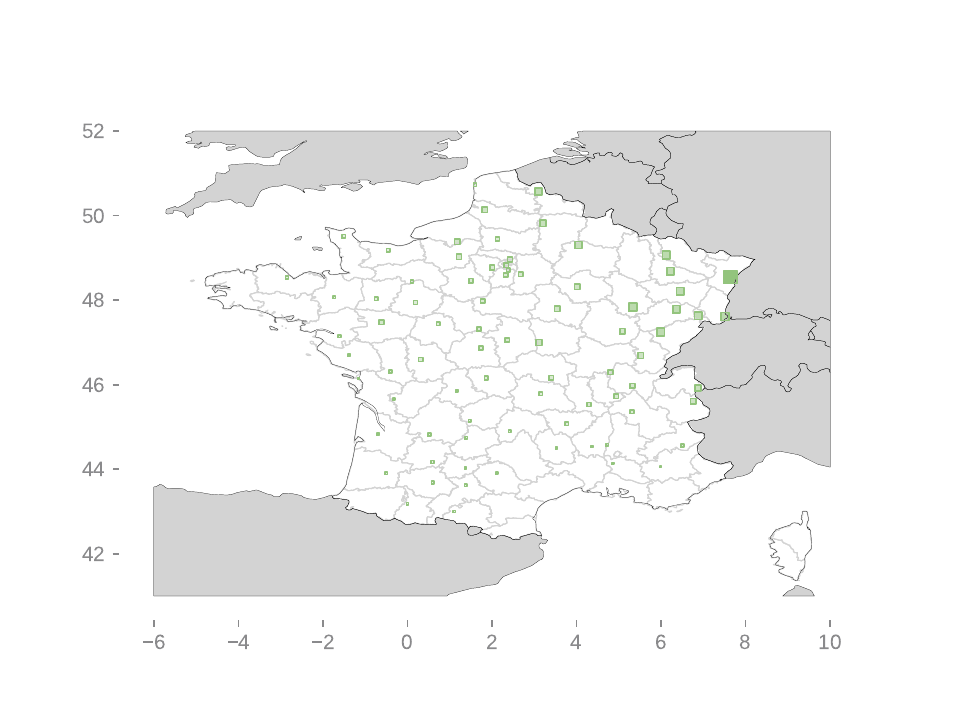}
        \caption{}
        \label{subfig:cluster_east}
    \end{subfigure}
    \hspace{0.02\linewidth}
    \begin{subfigure}[t]{0.48\linewidth}
        \includegraphics[width=\textwidth]{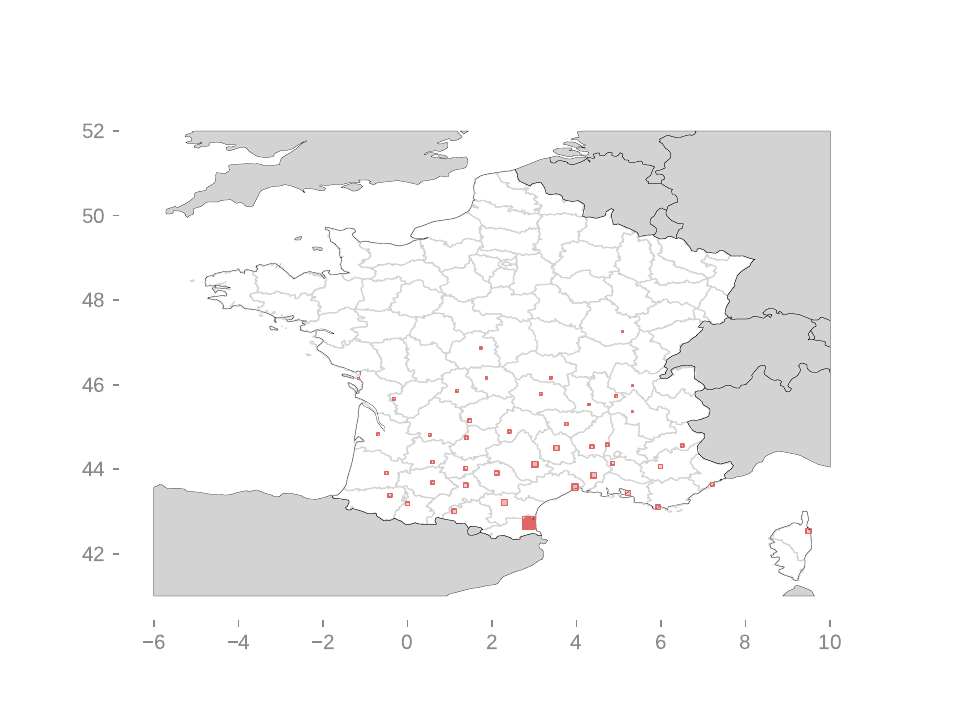}
        \caption{}
        \label{subfig:cluster_south}
    \end{subfigure}
    
    \caption{In Panel \subref{subfig:cluster_corsica}, we show the clusters for the Corsica latent variable. Panel \subref{subfig:cluster_west} displays clusters for the western latent variable. Panel \subref{subfig:cluster_east} shows clusters for the eastern latent variable. Finally, Panel \subref{subfig:cluster_south} depicts clusters for the southern latent variable. The strength of association is indicated by the size and color intensity of each square.}
    \label{fig:spatial_clusters}
\end{figure}

In our analysis, we focus on weekly maxima of hourly precipitation recorded at $92$ weather stations in France during the fall season, spanning from September to November, for the years 1993 to 2011, resulting in $228$ block maxima. This dataset was provided by Météo-France and has been previously used in \cite{bernard2013clustering}. The selection of stations was based on their data quality and ensuring a relatively uniform coverage of France.

We use the process described in Section \ref{subsec:data_driven} to choose the tuning parameter $\delta$. The entire process recommends using $c_\ell \approx 0.82$ as the most suitable threshold value for our analysis. Employing the designated threshold, we unveil four latent variables situated in the western, eastern, southern regions of metropolitan France and Corsica. It is crucial to highlight that our process operates solely based on rainfall records, devoid of any geographical information. Consequently, discerning consistent spatial structures from just rainfall measurements is not a straightforward outcome. Spatial representation of clusters are depicted in Figure \ref{fig:spatial_clusters}. The Corsican cluster highlighted in Figure \ref{fig:spatial_clusters}, Panel \subref{subfig:cluster_corsica}, where the pure variable is located at Ajaccio, exhibits a strong association within the island, while other associations rapidly decline on the mainland of France. The western area above Bordeaux, indicated in Figure \ref{fig:spatial_clusters} Panel \subref{subfig:cluster_west}, exhibits robust dependencies with the central region around Paris. However, beyond these regions, the associations with the latent variable rapidly decrease. Symmetrically, the eastern region, spanning from Lyon and covering the Vosges mountains, Alsace, the Franche-Comté and regions in northeastern France, depicted in Figure \ref{fig:spatial_clusters} Panel \subref{subfig:cluster_east}, displays dependencies with the central regions while diminishing rapidly outside this area. In contrast, the western cluster shows a broader distribution spanning accross the entire country. The southern cluster, in Figure \ref{fig:spatial_clusters} Panel \subref{subfig:cluster_south}, showcases spatial dependencies over Corsica and Mediterranean cities. These associations rapidly fell-off, resulting in the formation of a less spread-out cluster. The clustering results for locations align quite close with \cite{bernard2013clustering, maume2023detecting} dividing France into north and south regions. The key distinction lies in our clusters being overlapping, providing a more nuanced understanding of the variability of each location's affiliation to a cluster. It is noteworthy that the farther a location is from the pure variable, the lesser the corresponding affiliation.

Except for the corsican cluster, the interpretation of the resulting clusters seems straightforward. The extreme rainfall in northern France can be attributed to disturbances originating from the Atlantic, impacting regions like Brittany, Paris, and other northern areas. In the southern regions of France, particularly during the fall, intense rainfall events typically arise from southern winds compelling warm and moist air to interact with the mountainous terrain of the Pyrénées, Cévennes, and Alps. This interaction often leads to the development of severe thunderstorms. While these events can be quite localised, they frequently impact a substantial portion of the Mediterranean coastal area. In the eastern regions, despite the presence of various microclimates, the Vosges moutains serve a delineation between the temperate oceanic climate in the western part and the continental climate in the eastern part, particularly in the Upper Rhine Bassin.

\begin{figure}
    \centering
    \includegraphics[scale=0.5]{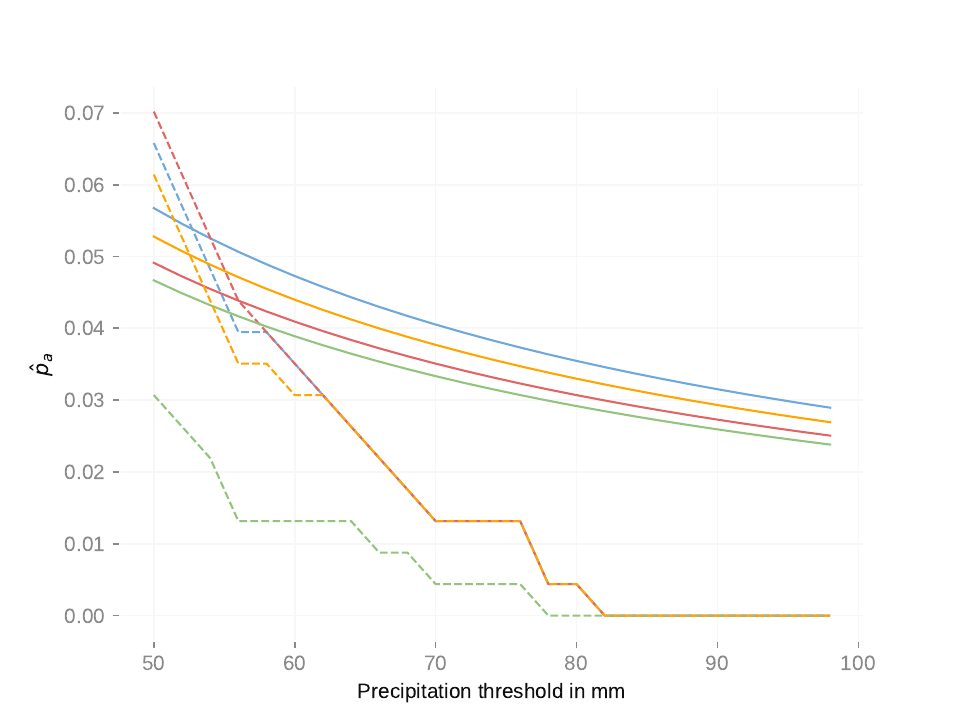}
    \caption{Approximations of $\hat{p}_a$ concerning precipitation in millimeters are illustrated through straight lines in blue, green, red and yellow representing the corsican, western, eastern and southern clusters, respectively. Empirical estimations are portrayed with dashed lines, mirroring the color code for their respective clusters.}
    \label{fig:pmax}
\end{figure}

Since the exponent measure of the linear factor model $\mathbf{X}$ in equation \eqref{eq:lfm} is discrete, calculating probabilities of extreme events, denoted as $\mathbb{P}\{\mathbf{X} \in C \}$ for a set of interest $C$, becomes a straightforward task. In our environmental dataset, determining regions and probabilities as
\begin{equation*}
    C_a(\mathbf{X}) = \cup_{j \in G_a} \{y^{(j)} > x^{(j)} \}, \, p_a(\mathbf{X}) = \mathbb{P}\{ \mathbf{X} \in C_a(\mathbf{X})\}, \, a \in \{1,2,3\}
\end{equation*}
is a common approach, especially when an extreme event at any location could potentially result in a climatological catastrophe. Letting $\hat{A}_{j\ell}$ be the element of the estimated $\hat{A}$, one can show that
\begin{equation*}
    \hat{p}_a(\mathbf{X}) = \sum_{\ell = 1}^3 \underset{j \in G_a}{\max} \, \frac{\hat{A}_{j\ell}}{x^{(j)}}, \, a \in \{1,2,3\}.
\end{equation*}

In Figure \ref{fig:pmax}, we illustrate $\hat{p}_a(\mathbf{X})$ where $\mathbf{X}$ is selected within the range of $50$ to $100$ mm for precipitation. The obtained estimation are of the same magnitude of those of \cite[Appendix B]{kiriliouk2022estimating} for Switzerland. A comparison with empirical estimates reveals that the latter tends to underestimate the probability of heavy rainfall events. Moreover, the linear factor model exhibits the capability to extrapolate, maintaining informative values even as the empirical estimates plummet to zero, losing their informativeness.

\subsection{Wildfires in French Mediterranean}
\label{subsec:wildfires}

\begin{figure}\centering
\subfloat[]{\label{subfig:cluster_0_wildfires}\includegraphics[width=.45\linewidth, height=0.3\textheight]{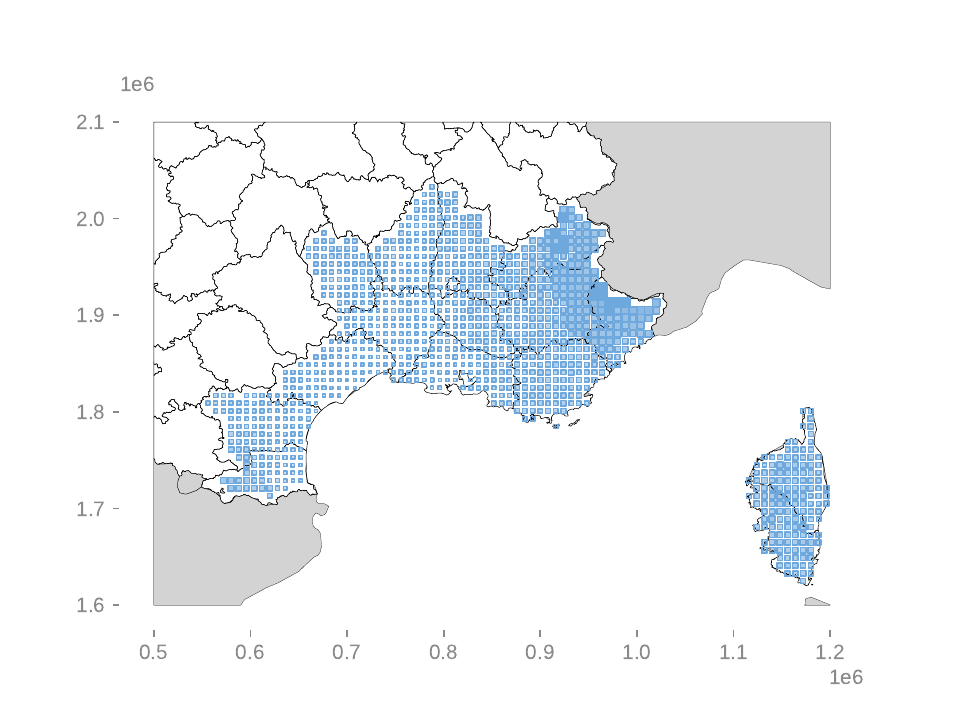}}\hfill 
\subfloat[]{\label{subfig:cluster_1_wildfires}\includegraphics[width=.45\linewidth, height=0.3\textheight]{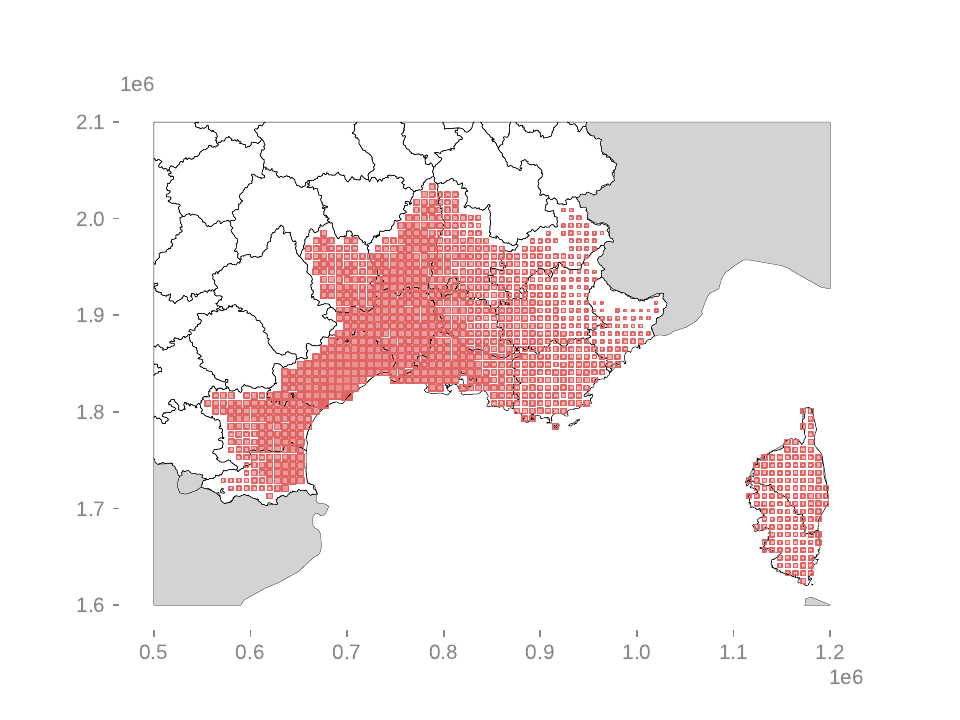}}
\caption{In Panel \subref{subfig:cluster_0_wildfires}, we show the spatial cluster for the first latent variable. Panel \subref{subfig:cluster_1_wildfires} displays the spatial cluster for the second latent variable. The strength of association is indicated by the size and color intensity of each square.}
\label{fig:wildfires}
\end{figure}

Our case study focuses on the southeastern part of France, covering an area of 80500 km². This region, prone to wildfires, encompasses a broad range of bioclimatic, environmental, and anthropogenic gradients. Approximately $60\%$ of study area consists of easily ignitable forested areas or vegetation types, such as shrubland and other natural herbaceous vegetation. Wildfires face challenges in spreading through the various available cover types. The observation period for this study is 1995-2018, specifically during the extended summer months (June-October). Gridded weather reanalysis data from the SAFRAN model of Météo-France, with an $8$km resolution, is utilised for analysis. This dataset has also undergone extensive examination in \cite{koh2023spatio}, from which we obtain the data.

Understanding the joint impact of variables like temperature, precipitation, and wind speed on fire activity patterns is highly intricate. Various meteorological indices on fire activity patterns have been developed, including the widely used unitless Fire Weather Index (FWI), originally designed for Canadian forests. Typically, FWI values are directly interpreted and used for fire danger mapping. However, our approach involves studying its spatial variability through a linear factor model. In our methodology, we extract monthly maxima of FWI during the extended summer months over the $1143$ pixels, resulting in $100$ observations. Through a data-driven approach to select the threshold, as explained in Section \ref{subsec:data_driven}, we choose $\delta^\star \approx 0.1765$ to obtain two latent factors (see Figure \ref{fig:wildfires}. These factors are directly interpretable in terms of elevation (refer to Figure \ref{fig:wildfires} Panel \subref{subfig:cluster_0_wildfires} and Figure \ref{fig:wildfires} Panel \subref{subfig:cluster_1_wildfires}). Indeed, the first cluster demonstrates a strong association within mountainous areas (Western Alps, Corsica, and Pyrénées), while the second exhibits associations within lowlands prone to fire activity (\cite{frejaville2015spatiotemporal}) and heatwaves (\cite{ruffault2016objective}), mid-elevation hinterlands, and foothills.

\section{Discussion}

We have introduced a comprehensive methodology for estimating the parameters of a discrete spectral measure of a max-stable distribution. Our approach lies into model-based clustering and proves to be both rapid and convenient. Additionally, we have provided statistical assurances for our method, ensuring favorable outcomes even in high dimensions where the relationship between the dimensionality, represented by $d$, and the sample size, denoted as $n$, may vary and potentially result in larger $d$ values. These results are robust, grounded in general conditions that span a diverse array of applications. Our methodology however does not accommodate multivariate regularly varying distributions with an unknown tail index $\alpha$. Instead of the considered framework, let $\textbf{Z}$ consists of independent regularly varying random variables with a known tail index $\alpha$, then Theorem \ref{thm:matrix_product} can be reformulated as
\begin{equation*}
    \mathcal{X} = \bar{A} \odot \bar{A}^\top,
\end{equation*}
where $\bar{A}$ represents the standardised loading matrix of $\mathbf{X}$, that is
\begin{equation*}
    \bar{A} = (\bar{A}_{ja})_{d \times K} = \left( \frac{A_{ja}^\alpha}{\sum_{a=1}^K A_{ja}^\alpha} \right)_{j =1\dots,d, a = 1,\dots,K}.
\end{equation*}
Then, the entire proposed procure applies to $\bar{A}$ as long as the pure variable condition is satisfied, namely Condition \ref{cond:(ii)}. Setting the scaling condition $\sum_{a=1}^K A_{ja}^\alpha = 1$ for any $j = 1,\dots,d$, we can, however estimate the matrix $A$ with the further addition of the estimation of the tail index $\alpha$, demanding more intricate technical details within our non-asymptotic framework with weakly dependent observations. This aspect remain of significant interest for applications. Indeed, since marginal standardisation does not impact dependence modeling, it is a common practice to separate marginal and dependence modeling. This involves standardising each marginal to a common distribution and then focusing solely on modeling the dependence structure. However, in certain applications where the goal is to estimate failure regions of the form $\{ \sum_{j=1}^d v^{(j)} X^{(j)} > x \}$ with $x$ being large and $\sum_{j=1}^d v^{(j)} = 1$, $v^{(j)} >0$, such an approach may be suboptimal. In these cases, it is necessary to directly model the original vector $\mathbf{X}$. These failure regions are particularly relevant in climate applications, as demonstrated by \cite{kiriliouk2020climate} and \cite{kiriliouk2022estimating}.

One can also consider the contaminated linear factor model
\begin{equation*}
    \mathbf{X} = A \textbf{Z} + \sigma \eta,
\end{equation*}
where $A \in \mathbb{R}^{d \times K}$ satisfies Condition \ref{cond:(i)}, $\textbf{Z} = (Z^{(1)},\dots, Z^{(K)})$ is a $K$-dimensional vector with i.i.d. standard Fréchet distributed components, $\sigma > 0$ regulates the signal-to-noise ratio and $\eta$ is a common factor noise distributed as a standard Fréchet. The standardised loading matrix $\bar{A}$ of $\mathbf{X}$ is expressed as:
\begin{equation*}
    \bar{A} = \left( \frac{A_{ja} + \sigma}{1+\sigma} \right)_{j =1\dots,d, a = 1,\dots,K}.
\end{equation*}
The current challenge is that Theorem \ref{thm:identifiability_I} \ref{item_i:identifiability_I} no longer applies since
\begin{equation*}
    \chi(i,j) = \frac{\sigma}{1+\sigma} \neq 0, \quad i \in I_a, j \in I_b, b \neq a.
\end{equation*}
So, we can no longer recover latent factors using pairwise asymptotic independence obtained from the proposed model in \eqref{eq:lfm}. However, taking $a \in [K]$ with $|I_a| \geq 2$, it is readily verified that
\begin{equation*}
    \chi(i,k) = \frac{A_{ka} + \sigma}{1+\sigma} < 1 = \chi(i,j)
\end{equation*}
for any $k \notin I_a$ and $i,j \in I_a$. Thus, a procedure to identify $[K]$ is possible using the more stringent condition
\begin{Assumption}{(ii'')}
	\label{cond:(ii'')}
    For any $a \in \{1,\dots,K\}$, there exist at least two indices $j \in \{1,\dots,d\}$ such that $A_{ja} =1$ and $A_{jb} = 0$, $\forall b \neq a$.
\end{Assumption}
Without knowledge of $\sigma$, the matrix $A$ can be recovered up to a multiplication constant (and by multiplication by a permutation matrix). It is also crucial to emphasise that such Condition \ref{cond:(ii'')} paves the way to reduce the computational complexity of our procedure method.

A possible extension of the methodology is matrix-valued data, which has become increasingly prevalent in many applications. Most existing clustering methods for this type of data are tailored to the mean model and do not account for the (extremal) dependence structure of the variables. To extract information from the extremal dependence structure for clustering, we can propose a new latent variable model for the variables arranged in matrix form, with some unknown loading matrices representing the clusters for rows and columns.

Drawing an analogy with the linear factor model studied in this paper, assume the variables are stacked as a random matrix $X \in \mathbb{R}^{p \times q}$ which follows the decomposition
\begin{equation*}
	X = A Z B^\top + E,
\end{equation*}
where $Z \in \mathbb{R}^{K_1 \times K_2}$ is a latent variable matrix which is regularly varying, meaning that there exist a scaling sequence $\{c_n\}$ and a measure $\Lambda_Z$ on $\mathcal{M}_{K_1, K_2}(\mathbb{R}_+)$ such that the following vague convergence holds:
\begin{equation*}
	n \mathbb{P}\left\{ c_n^{-1} Z \in \cdot \right\} \overunderset{v}{n \rightarrow \infty}{\longrightarrow} \Lambda_Z(\cdot).
\end{equation*}
$A \in \mathbb{R}^{p \times K_1}$ and $B \in \mathbb{R}^{q \times K_2}$ are the unknown loading matrices for the rows and the columns, respectively. $E \in \mathbb{R}^{p \times q}$ represents the random noise matrix with entries having lighter tails.

\section*{Acknowledgements}
This work has been supported by the project ANR McLaren (ANR-20-CE23-0011). Financial support by the Deutsche Forschungsgemeinschaft (DFG, German Research Foundation; Project-ID 520388526; TRR 391: Spatio-temporal Statistics for the Transition of Energy and Transport) is gratefully acknowledged.

\section*{Data Availability Statement}

Our code for generating the numerical results and applications can be accessed via the following link \href{https://github.com/Aleboul/Linear_factor_models}{https://github.com/Aleboul/Linear\_factor\_models}.

\printbibliography

\appendix

\section{Investigation into the computation time of clique algorithm}
\label{sec:computation_time}

\begin{figure}
    \centering
    \includegraphics[width=0.5\textwidth, height=0.3\textheight]{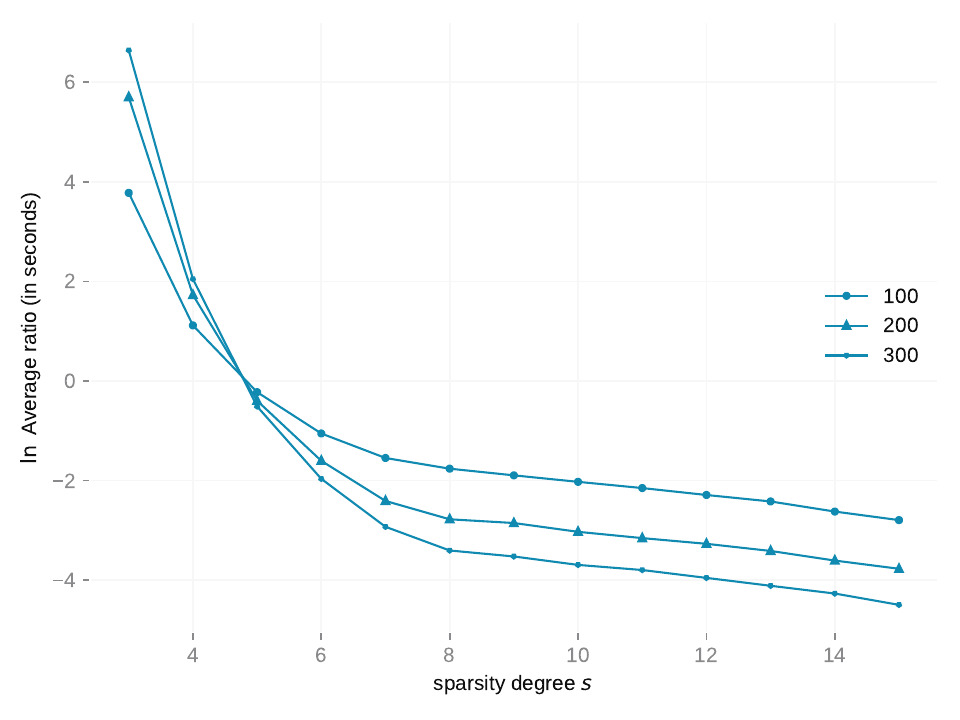}
    \caption{Average ratio $T_{BK} / T_{MILP}$ in seconds set to the log scale with respect to the sparsity degree $s \in \{2,3,\dots,15\}$ and $d \in \{100,200,300\}$.}
    \label{fig:computation time}
\end{figure}

In this section, we explore the computation time required to identify a clique using the extremal correlation matrix, irrespective of whether the matrix is sparse or not. To achieve this, we examine the approach outlined in the main paper, which involves the following binary problem,
\begin{equation*}
\begin{aligned}
\max_{x^{(i)}} \quad & \sum_{i=1}^d x^{(i)}\\
\textrm{s.t.} \quad & x^{(i)} + x^{(j)} \leq 1, \quad \forall (i,j) \in \bar{E}\\
  &x^{(i)} \in \{0,1\}, \quad i = 1,\dots,d,    \\
\end{aligned}
\end{equation*}
and the Bron-Kerbosh algorithm (\cite{bron1973algorithm}). The matrix $A$ is constructed as follows: we designate the initial rows to comprise the first $20$ pure variables. For generating $A_J$, where $j \in J$, we randomly choose the support from the set ${1,2,\dots,20}$ with a sparsity $s \in \{2,3,\dots,15\}$. Subsequently, we form the extremal correlation matrix $\mathcal{X} = A \odot A^{\top}$ and investigate a clique using the two aforementioned methods in $20$ replications. We examine three scenarios with varying dimensions, denoted as $d \in \{100,200,300\}$. We denote the time spent recovering the clique through the adjacency matrix $E$ computed with the extremal correlation matrix $\mathcal{X}$ as $T_{BK}$ for the Bron-Kerbosch algorithm and $T_{MILP}$ for the binary problem. The results are illustrated in Figure \ref{fig:computation time}. For a concise interpretation of the numerical results, when $d = 300$ and $s = 3$, the binary problem is 768 times faster than the Bron-Kerbosch algorithm, and conversely, the Bron-Kerbosch algorithm is 100 times faster when $s = 15$.

As anticipated, when the sparsity $s$ is low ($s < 4$), the binary problem proves to be the most effective in recovering the maximum clique, whereas the Bron-Kerbosch algorithm exhibits superior performance for a sparsity index $s \geq 4$. Regardless of the dimension, the (log) average ratio is decreasing and shows a rapid deceleration when $s \geq 4$. The contrast between the two methods becomes more pronounced with increasing considered dimensions.

\section{Algorithm}
\label{subsec:algorithms}

We give below the specifics of Algorithm \ref{alg:pure_variable} motivated in Section \eqref{sec:estimation_chap_4}, the Algorithm \ref{alg:htsp} and summarize our final algorithm in Algorithm \ref{alg:clust} (Soft Clustering lineaR fActor Model).

\begin{algorithm}
    \renewcommand{\thealgorithm}{(PureVar)}
	
	\caption{}

\begin{algorithmic}[1]
\Procedure{PureVar}{$\widehat{\mathcal{X}}$,$\delta$}
    \State Initialize: $\mathcal{I} = \emptyset$
    \State Construct the graph $G = (V,E)$ where $V = [d]$ and $(i,j) \in E$ if $\hat{\chi}_{n,m}(i,j) \leq \delta$
    \State Find a maximum clique, $\bar{\mathcal{G}}$, of $G$
    \For{$i \in \bar{\mathcal{G}}$}
        \State $\hat{I}^{(i)} = \{ j \in [d] \setminus \{i\} \, : 1-\hat{\chi}_{n,m}(i,j) \leq \delta \}$
        \State $\hat{I}^{(i)} = \hat{I}^{(i)}\cup\{i\}$
        \State $\hat{\mathcal{I}} = \textbf{MERGE}(\hat{I}^{(i)}, \hat{\mathcal{I}})$
    \EndFor
    \State Return $\hat{\mathcal{I}}$ and $\hat{K}$ as the number of sets in $\hat{\mathcal{I}}$
\EndProcedure
\end{algorithmic}
\label{alg:pure_variable}
\end{algorithm}

\begin{algorithm}
    \renewcommand{\thealgorithm}{(MERGE)}
	
	\caption{}

\begin{algorithmic}[1]
\Procedure{MERGE}{$\hat{I}^{(i)}$,$\hat{\mathcal{I}}$}
    \For{$G \in \hat{\mathcal{I}}$}
        \If{$G \cap \hat{I}^{(i)} \neq \emptyset$}
			\State $G = G \cap \hat{I}^{(i)}$
            \State Return $\hat{\mathcal{I}}$
    	\EndIf
    \EndFor
    \State $\hat{I}^{(i)} \in \hat{\mathcal{I}}$
    \State Return $\hat{\mathcal{I}}$
\EndProcedure
\end{algorithmic}
\label{alg:merge}
\end{algorithm}

\begin{algorithm}
    \renewcommand{\thealgorithm}{(HTSP)}
	
	\caption{}

\begin{algorithmic}[1]
\Procedure{HTSP}{$\hat{\mathcal{X}}$,$\delta$, $\hat{I}$}
	\For {$ j \in [d] \setminus \hat{I}$}
		\State $\bar{\chi}^{(j)} = \left( \frac{1}{|\hat{I}_a|} \sum_{i \in \hat{I}_a} \hat{\chi}_{n,m}(i,j) \right)_{a = 1,\dots,\hat{K}}$
		\State $\bar{\beta}^{(j)} = \left(\bar{\chi}^{(j)}_a \mathds{1}_{ \{ \bar{\chi}^{(j)}_a > \delta \} } \right)_{a = 1,\dots,\hat{K}}$
    	\State $\widehat{\mathcal{S}} = \textrm{supp}(\bar{\beta}^{(j)})$
    	\State $\left.\hat{\beta}^{(j)}\right|_{\widehat{\mathcal{S}}} = \mathcal{P}_{\Delta_{\hat{K}-1}}(\left.\bar{\beta}^{(j)}\right|_{\widehat{\mathcal{S}}})$, $\left.\hat{\beta}^{(j)}\right|_{\widehat{\mathcal{S}}^c} = 0$
    \EndFor
\EndProcedure
\end{algorithmic}
\label{alg:htsp}
\end{algorithm}

\begin{algorithm}
    \renewcommand{\thealgorithm}{(SCRAM)}
	
	\caption{}

\begin{algorithmic}[1]
\Procedure{SCRAM}{$\hat{\mathcal{X}}$, the tuning parameter $\delta$}
    \State Apply Algorithm \ref{alg:pure_variable} to obtain the number of clusters $\hat{K}$, the estimated set of pure variables $\hat{I}$ and its partition of $\hat{\mathcal{I}}$.
    \State Estimate $A_I$ by $\hat{A}_{\hat{I}}$ from \eqref{eq:estimator_A_I}.
    \State Estimate $A_J$ by $\hat{A}_{\hat{J}}$ applying Algorithm \ref{alg:htsp}. Combine $\hat{A}_{\hat{I}}$ with $\hat{A}_{\hat{J}}$ to obtain $\hat{A}$.
    \State Estimate fuzzy clusters $\hat{\mathcal{G}} = \{ \hat{G}_1, \dots,\hat{G}_K\}$ from \eqref{eq:estimator_over_clust} by using $\hat{A}$.
    \State Output $\hat{A}$ and $\hat{\mathcal{G}}$.
\EndProcedure
\end{algorithmic}
\label{alg:clust}
\end{algorithm}
\section{Proofs of Section \ref{sec:identifiability}}
\begin{proof}[Proof of Theorem \ref{thm:matrix_product}]
    Let $i,j \in \{1,\dots,d\}$ be arbitrary with $i \neq j$. Define $Y^{(i)} = \sum_{a = 1}^K A_{ia} Z^{(a)}$ and $Y^{(j)} = \sum_{a=1}^K A_{ja}Z^{(a)}$. Note that $\textbf{Y}$ and $\textbf{X}$ have the same exponent measure since they differ only by a sum of a random variable with a lighter tail (see Lemma \ref{lem:tail_balance_condition}). So we only have to compute bivariate extremal correlations for $\textbf{Y}$ to obtain those of $\textbf{X}$. In order to obtain bivariate regular variation of $\textbf{Y}^{(i,j)} = (Y^{(i)}, Y^{(j)})$, consider the map $\psi$ from $\mathbb{R}_+^{K} \rightarrow [0,\infty)^2$ defined by
    \begin{equation*}
        \psi(z^{(1)}, \dots, z^{(K)}) = \left(\sum_{a=1}^K A_{ia} z^{(a)}, \sum_{a=1}^K A_{ja} z^{(a)}\right).
    \end{equation*}
    For a measurable subset $A$ of $\mathbb{R}^2$, separated from $0$, we obtain by corollary 2.1.14 of \cite{kulik2020heavy}:
    \begin{align*}
        \Lambda_{\textbf{Y}^{(i,j)}}(A) = \Lambda_\textbf{Z} \circ \psi^{-1}(A) &= \sum_{a = 1}^K \delta_0 \otimes \dots \otimes \Lambda_{Z^{(a)}} \otimes \dots \otimes \delta_0 \circ \psi^{-1}(A) \\
        &= \sum_{a=1}^K \int_{0}^{\infty} \mathds{1}_A(A_{ia}s, A_{ja}s) s^{-2} ds.
    \end{align*}
    Applying to $A = (1,\infty) \times (1,\infty)$ and $A = (1,\infty) \times \mathbb{R}_+$, we get respectively
    \begin{align*}
        \Lambda_{\textbf{Y}^{(i,j)}}((1,\infty) \times (1,\infty)) = \Lambda_{\textbf{Z}} \circ \psi^{-1}(A) &= \sum_{a = 1}^K \int_{0}^{\infty} \mathds{1}_{(1,\infty)\times (1,\infty)}(A_{ia}s, A_{ja}s) s^{-2} ds \\
        &= \sum_{a=1}^K \int_{0}^{\infty} \mathds{1}_{ \{ s > 1/A_{ia}, s > 1 / A_{ja} \} } s^{-2} ds\\
        &= \sum_{a=1}^K \left( \frac{1}{A_{ia}} \vee \frac{1}{A_{ja}} \right)^{-1} = \sum_{a=1}^K (A_{ia} \wedge A_{ja}),
    \end{align*}
    and
    \begin{equation*}
        \Lambda_{\textbf{Y}^{(i,j)}}((1,\infty) \times \mathbb{R}_+) = \sum_{a=1}^K A_{ia} = 1.
    \end{equation*}
    Thus
    \begin{equation*}
        \chi(i,j) = \underset{x \rightarrow \infty}{\lim} \frac{\mathbb{P}\{ Y^{(i)} > x, Y^{(j)} > x\}}{\mathbb{P}\{Y^{(i)} > x\}} = \frac{\Lambda_{\textbf{Y}^{(i,j)}}((1,\infty) \times (1,\infty))}{\Lambda_{\textbf{Y}^{(i,j)}}((1,\infty) \times \mathbb{R}_+)} = \sum_{a=1}^K A_{ia} \wedge A_{ja}.
    \end{equation*}
    \end{proof}
    We know state and prove two lemmata that are crucial for the main results of this section. All results are proved under the condition that model \eqref{eq:lfm} and Conditions \ref{cond:(i)}-\ref{cond:(ii)} hold.
    \begin{lemma}
        \label{lem:identifiability_I_1}
        For any $a \in [K]$, $i \in I_a$ and $|I_a| \geq 2$ we have
        \begin{enumerate}
            \item $\chi(i,j) = 1$ for all $j \in I_a$, \label{item_i_lem:identifiability_I_1}
            \item $\chi(i,j) < 1$ for all $j \notin I_a$. \label{item_ii_lem:identifiability_I_1}
        \end{enumerate}
    \end{lemma}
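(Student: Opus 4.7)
The plan is to apply Theorem \ref{thm:matrix_product}, which gives the closed form $\chi(i,j)=\sum_{k=1}^K A_{ik}\wedge A_{jk}$, and then exploit the extreme sparsity pattern of the rows indexed by pure variables.

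For claim \ref{item_i_lem:identifiability_I_1}, I would fix $a\in[K]$ and $i,j\in I_a$. By definition of $I_a$ in \eqref{eq:pure_set}, $A_{ia}=A_{ja}=1$ and all other entries in rows $i,j$ vanish. Plugging this into Theorem \ref{thm:matrix_product} collapses every term in $\sum_k A_{ik}\wedge A_{jk}$ except for $k=a$, which contributes $1\wedge 1=1$; the remaining terms are $0\wedge 0=0$. Hence $\chi(i,j)=1$. The hypothesis $|I_a|\geq 2$ is just what ensures a distinct $j\neq i$ to which the statement can be applied.

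For claim \ref{item_ii_lem:identifiability_I_1}, take $i\in I_a$ and $j\notin I_a$. Using again the pure-variable sparsity of row $i$, the formula of Theorem \ref{thm:matrix_product} simplifies to
\begin{equation*}
\chi(i,j)=A_{ia}\wedge A_{ja}+\sum_{b\neq a}A_{ib}\wedge A_{jb}=1\wedge A_{ja}+0=A_{ja},
\end{equation*}
since the loadings are non-negative (as coefficients of a non-negative spectral representation, cf.\ \eqref{eq:X_ang_meas}) and $A_{ja}\in[0,1]$ by Condition \ref{cond:(i)}. It therefore remains to check that $A_{ja}<1$ strictly. This is the only subtle point: if one had $A_{ja}=1$, then Condition \ref{cond:(i)} $\sum_{b=1}^K A_{jb}=1$ together with non-negativity of every $A_{jb}$ would force $A_{jb}=0$ for all $b\neq a$, which by the definition \eqref{eq:pure_set} would place $j$ in $I_a$, contradicting $j\notin I_a$. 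So the strict inequality holds.

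The argument is essentially a direct computation once Theorem \ref{thm:matrix_product} is in hand; the only non-mechanical step is the short contradiction that rules out $A_{ja}=1$ in \ref{item_ii_lem:identifiability_I_1}, and it is there that both halves of Condition \ref{cond:(i)} (normalisation and implicit non-negativity of the loadings) are really used. No part of the proof needs $|I_a|\geq 2$ other than to guarantee that the statement in \ref{item_i_lem:identifiability_I_1} is non-vacuous.
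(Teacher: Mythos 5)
Your proposal is correct and follows essentially the same route as the paper: both proofs apply Theorem \ref{thm:matrix_product} to a pure row $i\in I_a$, reduce the sum to the single term $\chi(i,j)=A_{ja}$, and conclude equality for $j\in I_a$ and strict inequality otherwise. Your explicit contradiction argument ruling out $A_{ja}=1$ for $j\notin I_a$ (via Condition \ref{cond:(i)} and non-negativity of the loadings) is a point the paper leaves implicit, so your write-up is, if anything, slightly more complete.
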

    \begin{proof}[Proof of Lemma \ref{lem:identifiability_I_1}]
        For any given $i \in \{1,\dots,d\}$, we define the set $s(i) := \{1 \leq a \leq K : A_{ia} \neq 0\}$. For any $i \in I_a$ and $j \neq i$, we have
        \begin{align*}
            \chi(i,j) = \sum_{a \in s(i)} A_{ia} \wedge A_{ja} = A_{ja} \leq 1,
        \end{align*}
        we observe that we have equality in the above display for $j \in I_a$ and strict inequality for $j \notin I_a$ which proves the lemma.
    \end{proof}
    \begin{lemma}
        \label{lem:identifiability_I_2}
        We have $S_i \cup \{i\} = I_a$ and $M_i = 1$ for any $i \in I_a$, with $|I_a| \geq 2$ and $a \in [K]$. 
    \end{lemma}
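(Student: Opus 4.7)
The plan is to deduce both claims directly from Lemma \ref{lem:identifiability_I_1} together with the definitions of $M_i$ and $S_i$ in \eqref{eq:M_i} and \eqref{eq:S_i}, with essentially no additional computation.

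First I would fix $a \in [K]$ and $i \in I_a$ with $|I_a| \geq 2$. Since $|I_a| \geq 2$, there exists at least one index $j \in I_a \setminus \{i\}$, and by part \ref{item_i_lem:identifiability_I_1} of Lemma \ref{lem:identifiability_I_1} we have $\chi(i,j) = 1$. Hence
\begin{equation*}
    M_i = \max_{j \in [d]\setminus\{i\}} \chi(i,j) \geq 1.
\end{equation*}
On the other hand, by Theorem \ref{thm:matrix_product} together with Condition \ref{cond:(i)},
\begin{equation*}
    \chi(i,j) = \sum_{k=1}^K A_{ik} \wedge A_{jk} \leq \sum_{k=1}^K A_{ik} = 1,
\end{equation*}
for every $j \neq i$, so $M_i \leq 1$. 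Combining the two inequalities yields $M_i = 1$.

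Next I would identify $S_i$. By definition, $S_i = \{ j \in [d]\setminus\{i\} : \chi(i,j) = 1 \}$. Parts \ref{item_i_lem:identifiability_I_1} and \ref{item_ii_lem:identifiability_I_1} of Lemma \ref{lem:identifiability_I_1} together assert that, for $j \neq i$, $\chi(i,j) = 1$ if and only if $j \in I_a$. Therefore $S_i = I_a \setminus \{i\}$, and adjoining $i$ gives $S_i \cup \{i\} = I_a$, which completes the proof.

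There is no real obstacle here: the whole argument is a short deduction from Lemma \ref{lem:identifiability_I_1}, the only subtlety being the use of the hypothesis $|I_a| \geq 2$ to guarantee that the maximum defining $M_i$ is actually attained at an index inside $I_a$.
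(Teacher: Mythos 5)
Your proof is correct and follows the same route as the paper, which simply observes that Lemma \ref{lem:identifiability_I_1} gives $M_i = 1$ and $S_i = I_a \setminus \{i\}$; you merely spell out the details (the existence of $j \in I_a \setminus \{i\}$ from $|I_a| \geq 2$, the upper bound $\chi(i,j) \leq 1$, and the characterisation of $S_i$ via both parts of that lemma). No gaps.
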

    \begin{proof}[Proof of Lemma \ref{lem:identifiability_I_2}]
        Lemma \ref{lem:identifiability_I_1} implies that, for any $i \in I_a$, $M_i = 1$ and $S_i = I_a \setminus \{i\}$ which proves the lemma.
    \end{proof}
    \begin{proof}[Proof of Theorem \ref{thm:identifiability_I}]
        \paragraph{Proof of \ref{item_i:identifiability_I}} By condition \ref{cond:(ii)}, for any $a \in [K]$, there exists $i_a \in [d]$ such that $X^{(i_a)} = Z^{(a)} + E^{(i_a)}$. By its very nature under the model \eqref{eq:lfm}, the vector $(X^{(i_1)},\dots,X^{(i_K)})$ is the largest vector being asymptotically independent, i.e.,
        \begin{equation}
            \label{eq:pure_clique}
            \chi(i,j) = 0, \quad \forall i,j \in \{i_1,\dots,i_K\},
        \end{equation}
        see \cite[Proposition 5.24]{resnick2008extreme}. Let us construct the simple undirected graph $G = (V,E)$ with a finite set of vertices $V = [d]$ and a finite set of ordered pairs $(i,j)$ of edges such that $(i,j) \in E$ if $\chi(i,j) = 0$. Through this construction of G, the search for a maximum clique in G is equivalent to searching for a set of indices, denoted as $\{i_1, \dots, i_K\}$, satisfying equation \eqref{eq:pure_clique}. Consequently, we established \ref{item_i:identifiability_I}.
        \paragraph{Proof of \ref{item_ii:identifiability_I}} Consider any $j \in [d]$ with $M_i = 1$ for $i \in I_a$. Since $|I_a| \geq 2$, by Lemma \ref{lem:identifiability_I_1}, the maximum is achieved for any pairs $j,k \in I_a$. However, if $j \notin I_a$, we have $\chi(j,k) < 1$ for all $k \neq j$. Hence $j \in I_a$ and this conclude the proof of the sufficiency part. It remains to prove the necessity part.
        
        Let $i \in I_a$ for some $a \in [K]$ and $j \in I_a \cap S_i$. Since $j \in S_i$ and $|I_a| \geq 2$, we have $\chi(i,j) = M_i = 1$, as a result of Lemma \ref{lem:identifiability_I_2}, which proves \ref{item_ii:identifiability_I}.

        \paragraph{Proof of \ref{item_iii:identifiability_I}} We start with the following construction approach. Let $N = [d]$ be the set of all variables indices and $O = \emptyset$. Let $M_i$ and $S_i$ be defined in \eqref{eq:M_i} and \eqref{eq:S_i}, respectively.
        \begin{enumerate}[label=\textcolor{frenchblue}{\bf(\theenumi)}]
        \item Construct the undirected graph $G = (V,E)$ where $(i,j) \in E$ if $\chi(i,j) = 0$.
        \item Find a maximum clique of $G$ denoted as $\bar{\mathcal{G}}$.
        \item Choose $i \in Q$ and calculate $M_i$ and $S_i$. \label{item:step_3}
        \begin{enumerate}[label=\textcolor{frenchblue}{(\alph*)}]
            \item If $M_i = 1$, set $I^{(i)} = S_i \cup \{i\}$, $O = O \cup \{i\}$ and $Q \setminus \{i\}$.
            \item Otherwise, replace $Q$ by $Q \setminus \{i\}$.
        \end{enumerate}
        \item Repeat Step \ref{item:step_3} until $Q = \emptyset$.
        \end{enumerate}
        We show that $\{ I^{(i)} : i \in O\} = \mathcal{I}$. Let $i \in O$ be arbitrary fixed. By \ref{item_i:identifiability_I} and \ref{item_ii:identifiability_I} of Theorem \ref{thm:identifiability_I}, we have $i \in I$. Thus, there exists $a \in [K]$ such that $i \in I_a$. If $|I_a| \geq 2$, by Lemma \ref{lem:identifiability_I_2}, $i \in I_a$ implies $I_a = S_i \cup \{i\} = I^{(i)}$. On the other hand, let $a \in [K]$ be arbitrary fixed. By condition \ref{cond:(ii)}, there exists, at least one $i \in I_a$. If $|I_a| = 1$, then by \ref{item_i:identifiability_I}, we have $I^{(i)} = I_a$. If $|I_a| \geq 2$ and $j \in I_a$, then $\chi(i,j) = 1$ and $j \in S_i$, once again, by Lemma \ref{lem:identifiability_I_2}, $S_i \cup \{i\} = I_a$, that is $I^{(i)} = I_a$.
    \end{proof}

    \begin{proof}[Proof of Theorem \ref{thm:identifiability_J}]

    Theorem \ref{thm:identifiability_I} establishes that the set $\mathcal{X}$ uniquely determines both $I$ and its partition $\mathcal{I}$, with the exception of potential permutations of labels. When we have $I$ and its partition $mathcal{I}$ available, represented as $\{I_1, \dots, I_K\}$, for any index $i$ belonging to $I$, there exists a single integer $1 \leq a \leq K$ such that $i \in I_a$. We then construct a row vector $A_{i\cdot}$ of dimension $K$, akin to the canonical basis $\textbf{e}_a$ in $\mathbb{R}^K$, where the element at position $a$ equals 1, and all other elements are 0. Consequently, the matrix $A_I$, which has dimensions $|I| \times K$ and is composed of rows $A_{i\cdot}$, is uniquely determined, except for possible multiplications by permutation matrices.

    We show below that $A_J$ is also identifiable up to a signed permutation matrix. We begin by observing that, for each $i \in I_k$ for some $k \in [K]$ and any $j \in J$, Model \eqref{eq:lfm} implies
    \begin{equation*}
        \chi(i,j) = \sum_{a \in s(i)} A_{ia} \wedge A_{ja} = A_{jk}
    \end{equation*}
    and, after averaging over all $i \in I_k$,
    \begin{equation*}
        A_{jk} = \frac{1}{|I_k|} \sum_{i \in I_k} \chi(i,j).
    \end{equation*}
    Repeating this for every $k \in [K]$, we obtain the formula
    \begin{equation*}
        A_{j \cdot} = \left( \frac{1}{|I_1|} \sum_{i \in I_1} \chi(i,j), \dots, \frac{1}{|I_K|} \sum_{i \in I_K} \chi(i,j) \right),
    \end{equation*}
    for each $j \in J$, which shows that $A_J$ can be determined uniquely from $\mathcal{X}$ up to a permutation. Therefore, $A_J$ is identifiable which concludes the proof.
    \end{proof}
    \section{Proof of Section \ref{sec:estimation_chap_4}}
    For the sake of notations, we set $\hat{\nu}_{n,m}(1,\dots,d) := \hat{\nu}_{n,m}$.
    \begin{lemma}
        \label{lem:exp_ineq_mado}
        Let $(\mathbf{X}_t, t \in \mathbb{Z})$ satisfies the conditions in Theorem \ref{thm:exp_ineq}. Choose a $c_2 \in (0,\infty)$, and let $z = y - k^{-c_2}$ for any $y \geq k^{-c_2}$. Then for $k \geq 4$, there is a constant $c_1 > 0$ such that
        \begin{equation*}
            \mathbb{P} \left\{ |\hat{\nu}_{n,m} - \nu_m | \geq y + \frac{1}{k+1} \right\} \leq (2d+1) \exp \left\{ - \frac{c_1 k z^2}{1 + z \ln k (\ln \ln k)} +c_2 \ln k \right\}.
        \end{equation*}     
    \end{lemma}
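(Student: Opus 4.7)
The plan is to decompose the fluctuation of $\hat{\nu}_{n,m}$ around $\nu_m$ into a marginal-estimation error and a centered weakly-dependent sum, then to apply a Bernstein-type inequality for $\alpha$-mixing sequences (with a union bound over marginals).

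First, introduce the oracle madogram $\tilde{\nu}_{n,m} = k^{-1}\sum_{i=1}^k h(\mathbf{U}_{m,i})$ built from the unknown CDFs, where $h(\mathbf{u}) = \bigvee_{j=1}^d u^{(j)} - d^{-1}\sum_{j=1}^d u^{(j)}$. The functional $h$ is bounded and $2$-Lipschitz in the $\ell^\infty$ norm, and the rank transform satisfies the deterministic identity $\hat{F}_{n,m}^{(j)} = \tfrac{k}{k+1} F_k^{*(j)}$, with $F_k^{*(j)}(x) := k^{-1}\sum_{\ell=1}^k \mathds{1}\{M_{m,\ell}^{(j)} \le x\}$. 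A short computation then gives
\begin{equation*}
|\hat{\nu}_{n,m} - \tilde{\nu}_{n,m}| \;\le\; 2\max_{j \in [d]} \sup_{x} |F_k^{*(j)}(x) - F_m^{(j)}(x)| \;+\; \frac{c}{k+1},
\end{equation*}
so that after the triangle inequality $|\hat{\nu}_{n,m}-\nu_m| \le |\hat{\nu}_{n,m}-\tilde{\nu}_{n,m}| + |\tilde{\nu}_{n,m}-\nu_m|$, the additive $1/(k+1)$ in the statement is absorbed by the deterministic rank-bias term and it suffices to control, at level $y$, the oracle deviation and the $d$ marginal sup-norm deviations.

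Next, I would apply a Bernstein-type inequality for bounded, stationary, strongly mixing sequences with exponential $\alpha$-mixing rate (in the form due to Merlevède, Peligrad and Rio), which yields, for any centered bounded $(Y_i)$ of the required dependence structure,
\begin{equation*}
\mathbb{P}\Bigl\{\Bigl|\sum_{i=1}^k Y_i\Bigr| \ge k z\Bigr\} \;\le\; C\exp\!\left\{-\frac{c_1 k z^2}{1 + z\ln k\,(\ln\ln k)}\right\}.
\end{equation*}
For the oracle term this is applied directly to $Y_i = h(\mathbf{U}_{m,i}) - \mathbb{E}[h(\mathbf{U}_{m,1})]$; since each $h(\mathbf{U}_{m,i})$ is a measurable function of the block $(\mathbf{X}_{(i-1)m+1},\dots,\mathbf{X}_{im})$, the $Y_i$ inherit exponential $\alpha$-mixing and the assumption $k \ge 4$ validates the regime of the inequality. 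The $c_2 \ln k$ additive term in the exponent (and the $k^{-c_2}$ truncation encoded in the substitution $z = y - k^{-c_2}$) will come from the marginal piece: the sup over $x$ must be reduced to a grid of $O(k^{c_2})$ anchor points by exploiting monotonicity of $F_k^{*(j)}-F_m^{(j)}$ at resolution $k^{-c_2}$, applying the Bernstein bound pointwise to $Y_i = \mathds{1}\{M_{m,i}^{(j)}\le x\} - F_m^{(j)}(x)$, and taking a union bound over the grid (factor $k^{c_2} = e^{c_2\ln k}$) as well as over $j \in [d]$ and the two sides of the deviation. Adding up the contributions, with factor $2d$ for the marginal sup deviations and $1$ for the oracle deviation (or similar bookkeeping), produces the prefactor $2d+1$ stated in the lemma.

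The main obstacle is the sup-norm control of the empirical marginal distributions under weak dependence: one cannot simply union-bound over $x \in \mathbb{R}$, so a chaining/discretization at scale $k^{-c_2}$ is needed and must be compatible with the Merlevède--Peligrad--Rio rate. All other steps (the Lipschitz decomposition, the rank-bias identity, and the verification that $h(\mathbf{U}_{m,i})$ inherits exponential $\alpha$-mixing from $(\mathbf{X}_t)$) are routine once this reduction is in place.
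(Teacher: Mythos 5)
Your proposal follows essentially the same route as the paper's proof: split off the rank bias of order $1/(k+1)$, reduce the remaining deviation to the sup-norm errors of the $d$ empirical marginal distribution functions plus a centered sum of bounded block-maxima functionals, and control both via the Merlev\`ede--Peligrad--Rio inequality (Lemma \ref{lem:merlevede}) together with a monotonicity-based discretization at scale $k^{-c_2}$ with union bounds over the grid and the $d$ coordinates (exactly the paper's Lemma \ref{lem:ineq_gc}), which is where the $c_2\ln k$ term and the $2d+1$ prefactor come from. The only noteworthy difference is cosmetic: instead of your $2$-Lipschitz bound on $h$, the paper observes that the average of the scaled ranks equals $1/2$ exactly, matching $\mathbb{E}\bigl[d^{-1}\sum_j U_{m,1}^{(j)}\bigr]$, so the mean part of the madogram cancels identically and the rank bias is exactly $1/(k+1)$ with no extra factor, whereas your constants ($2$ in front of the sup-norm term and $c/(k+1)$ in the bias) are harmlessly absorbed into $c_1$.
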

        \begin{proof}
            Since for every $j \in \{1,\dots,d\}$, $U_{m,1}^{(j)}$ is uniformly distributed under the unit segment, we directly obtain
            \begin{equation*}
                \mathbb{E}\left[ \frac{1}{d} \sum_{j=1}^d U_{m,1}^{(j)} \right] = \frac{1}{2}.
            \end{equation*}
            Furthermore, by simple computations, and using the very nature of scaled ranks, we have
            \begin{equation*}
                \frac{1}{k} \sum_{i=1}^k \frac{1}{d} \sum_{j=1}^d \hat{U}_{n,m,i}^{(j)} =  \frac{1}{k} \sum_{i=1}^k \frac{1}{d} \sum_{j=1}^d \frac{i}{k+1} = \frac{1}{2}.
            \end{equation*}
            Hence, the desired quantity that we want to control can be simply rewritten as:
            \begin{equation*}
                |\hat{\nu}_{n,m} - \nu_m| = \left| \frac{1}{k} \sum_{i=1}^k \left[ \bigvee_{j=1}^d \hat{U}_{n,m,i}^{(j)} - \mathbb{E}\bigvee_{j=1}^d U_{m,i}^{(j)} \right] \right|.
            \end{equation*}
            Introduce by $\tilde{U}_{n,m,i}^{(j)}$ the scaled ranks using the factor $k$ (instead of $k+1$ for $\hat{U}_{n,m,i}^{(j)}$), $j=1,\dots,d$, $i=1,\dots,k$. Direct computation gives
            \begin{equation*}
                |\hat{\nu}_{n,m} - \nu_m| \leq \frac{1}{k+1} + \left| \frac{1}{k} \sum_{i=1}^k \left[ \bigvee_{j=1}^d \tilde{U}_{n,m,i}^{(j)} - \mathbb{E}\bigvee_{j=1}^d U_{m,i}^{(j)} \right] \right|.
            \end{equation*}
            The remaining term can be upper bounded by
            \begin{equation*}
                E_1 + E_2 := \left| \frac{1}{k} \sum_{i=1}^k \left[ \bigvee_{j=1}^d \tilde{U}_{n,m,i}^{(j)} - \bigvee_{j=1}^d U_{m,i}^{(j)} \right] \right| + \left| \frac{1}{k} \sum_{i=1}^k \left[ \bigvee_{j=1}^d U_{m,i}^{(j)} - \mathbb{E}\bigvee_{j=1}^d U_{m,i}^{(j)} \right] \right|.
            \end{equation*}
            Furthermore, $E_1$ is upper bounded by the following well-known quantity
            \begin{equation*}
                E_1 \leq \underset{j \in \{1,\dots,d\}}{\sup} \, \underset{i \in \{1,\dots,k\}}{\sup} \, \left| \tilde{U}_{n,m,i}^{(j)} - U_{m,i}^{(j)} \right| \leq \underset{j \in \{1,\dots,d\}}{\sup} \, \underset{x \in \mathbb{R}}{\sup} \, \left|\hat{F}_{n,m}^{(j)}(x) - F_m^{(j)}(x) \right|.
            \end{equation*}
            Thus, applying union bound and Lemma \ref{lem:ineq_gc} to $E_1$  and Lemma \ref{lem:merlevede} to $E_2$ (taking $n=k$ in the statement of both lemmas), we deduce the result.
        \end{proof}        
        \begin{proof}[Proof of Theorem \ref{thm:exp_ineq}]
            Taking Lemma \ref{lem:exp_ineq_mado} with $d = 2$ and $c_2 = 1$, we obtain
            \begin{equation*}
            \mathbb{P} \left\{ |\hat{\nu}_{n,m}(i,j) - \nu_m(i,j) | \geq y + \frac{1}{k+1} \right\} \leq 5 \exp \left\{ - \frac{c_1 k z^2}{1 + z \ln k (\ln \ln k)} + \ln k \right\},
        \end{equation*}
        where $z = y - k^{-1}$. Now taking
        \begin{equation}
            z = c_1 \left( \sqrt{\frac{\ln\left(\frac{k}{\delta}\right)}{k}} + \frac{\ln k \ln\ln(k)\ln \left( \frac{ k}{\delta} \right)}{k}\right),
        \end{equation}
        it implies that with probability at least $1-\delta$
        \begin{equation*}
            |\hat{\nu}_{n,m}(i,j) - \nu_m(i,j)| \leq c_1 \left( \sqrt{\frac{\ln\left(\frac{k}{\delta}\right)}{k}} + \frac{\ln k \ln \ln(k) \ln \left( \frac{k}{\delta} \right)}{k}\right) + \frac{1}{k+1} + \frac{1}{k}.
        \end{equation*}
        Now, using that
        \begin{equation*}
        	\hat{\chi}_{n,m}(i,j) = f(\tilde{\nu}_{n,m}(i,j)),
        \end{equation*}
        where
        \begin{align*}
            f\colon [0,1/6] & \rightarrow[0,1]\\
            x&\mapsto 2-\frac{0.5+x}{0.5-x},
        \end{align*}
        which is nonincreasing and $9$-lipschitz, hence one has
        \begin{align*}
        	|\hat{\chi}_{n,m}(i,j) - \chi_m(i,j)| = |f(\tilde{\nu}_{n,m}(i,j)) - f(\nu_m(i,j))| \leq 9 |\tilde{\nu}_{n,m}(i,j) - \nu_m(i,j)| \leq |\hat{\nu}_{n,m}(i,j) - \nu_m(i,j)|.
        \end{align*}
        We thus obtain that with probability at least $1-d^{-c_0}$ through a union bound
        \begin{equation*}
            \underset{1 \leq i < j \leq d}{\sup} \, |\hat{\chi}_{n,m}(i,j) - \chi_m(i,j) | \leq c_1 \left( \sqrt{\frac{\ln\left(k d \right)}{k}} + \frac{\ln k \ln \ln(k) \ln \left( k d \right)}{k}  \right),
        \end{equation*}
        for a sufficiently large constant $c_1$, thus the desired result.
        \end{proof}
        
    \section{Proof of Section \ref{sec:stat_guarantees}}

    \subsection{Proof of Section \ref{subsec:stat_guarantees_k_i}}

    \begin{lemma}
        \label{lem:stat_gua_K_I}
        Under Condition \ref{cond:SSC}, for any $i \in I_a$ with some $a \in [K]$, the following inequalities hold on the event
        \begin{enumerate}[label=\textcolor{frenchblue}{($A\arabic*$)}]
            \item $\hat{\chi}_{n,m}(i,j) \leq \delta$ for all $j \in I_b$ for some $b \in [K]$ with $b \neq a$; \label{lem:stat_gua_K_I_1}
            \item $1-\hat{\chi}_{n,m}(i,j) \leq \delta$ for all $j \in I_b$; \label{lem:stat_gua_K_I_2}
            \item $1-\hat{\chi}_{n,m}(i,k) > \delta$ for all $k \notin I_a$. \label{lem:stat_gua_K_I_3}
        \end{enumerate}
    \end{lemma}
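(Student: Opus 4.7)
The plan is to prove the three inequalities simultaneously by working on the event $\mathcal{E}$ defined in \eqref{eq:mathcal_E}, which provides the uniform deviation bound $|\hat{\chi}_{n,m}(i,j) - \chi(i,j)| \leq \delta$ for every pair $(i,j)$. On this event the sample extremal correlations track the population ones within $\delta$, so the task reduces to computing $\chi(i,j)$ from Theorem \ref{thm:matrix_product} in each case and invoking the strong signal condition \ref{cond:SSC} where necessary. I interpret \ref{lem:stat_gua_K_I_2} as the statement for $j \in I_a$ (since $\chi(i,j)=1$ is what makes the bound meaningful).

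For \ref{lem:stat_gua_K_I_1} and \ref{lem:stat_gua_K_I_2}, fix $i \in I_a$ so that $A_{ia} = 1$ and $A_{ib'} = 0$ for $b' \neq a$. If $j \in I_b$ with $b \neq a$, then $A_{jb}=1$ and $A_{jb'}=0$ for $b' \neq b$, so Theorem \ref{thm:matrix_product} yields
\begin{equation*}
\chi(i,j) = \sum_{k=1}^K A_{ik}\wedge A_{jk} = 0,
\end{equation*}
and on $\mathcal{E}$ this gives $\hat{\chi}_{n,m}(i,j) \leq \delta$. If instead $j \in I_a$, the same computation yields $\chi(i,j) = A_{ia}\wedge A_{ja} = 1$, and on $\mathcal{E}$ we obtain $1-\hat{\chi}_{n,m}(i,j) \leq \delta$.

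For \ref{lem:stat_gua_K_I_3}, I would split $k \notin I_a$ into two sub-cases. If $k \in I_b$ for some $b \neq a$, then as above $\chi(i,k) = 0$, so on $\mathcal{E}$ one has $1-\hat{\chi}_{n,m}(i,k) \geq 1-\delta > \delta$ provided $\delta < 1/2$ (which is guaranteed for $n$, $k$ large enough by the form of \eqref{eq:theoretical_delta}). If $k \in J$, then since only the $a$-th coordinate of $A_{i\cdot}$ is non-zero, Theorem \ref{thm:matrix_product} gives $\chi(i,k) = A_{ka}$. Condition \ref{cond:SSC1} yields $A_{ka} < 1-2\delta$, so on $\mathcal{E}$,
\begin{equation*}
\hat{\chi}_{n,m}(i,k) \leq \chi(i,k) + \delta < 1-\delta,
\end{equation*}
which rearranges to the desired inequality $1-\hat{\chi}_{n,m}(i,k) > \delta$.

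There is no genuine obstacle: the lemma is a direct case analysis coupling the deterministic structure revealed by Theorem \ref{thm:matrix_product} with the concentration event $\mathcal{E}$. The only subtlety is recognising that \ref{cond:SSC1} is exactly the separation needed in the ``non-pure'' sub-case of \ref{lem:stat_gua_K_I_3}, since without such a gap the sample error $\delta$ could push $\hat{\chi}_{n,m}(i,k)$ arbitrarily close to $1$; conversely, \ref{cond:SSC2} will play its role later when excluding non-pure variables from the maximum clique in the proof of Theorem \ref{thm:stat_K_I}, but it is not needed here.
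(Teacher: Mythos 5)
Your proposal is correct and follows essentially the same route as the paper: work on the event $\mathcal{E}$, compute the population quantities $\chi(i,j)$ from Theorem \ref{thm:matrix_product} (including reading \ref{lem:stat_gua_K_I_2} as the case $j \in I_a$), and invoke Condition \ref{cond:SSC1} to get the separation in \ref{lem:stat_gua_K_I_3}. Your explicit split of $k \notin I_a$ into $k \in I_b$ (where $\chi(i,k)=0$ and one only needs $\delta < 1/2$) versus $k \in J$ (where \ref{cond:SSC1} applies) is in fact slightly more careful than the paper, which applies \ref{cond:SSC1} to all $k \notin I_a$ even though it is stated only for $k \notin \mathcal{I}$.
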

    \begin{proof}
        For the entire proof, we work under the event $\mathcal{E}$ defined in \eqref{eq:mathcal_E}. To prove \ref{lem:stat_gua_K_I_1}, we observe that for any $j \in I_b$, with $b \in [K]$ and $b \neq a$, $\chi(i,j) = 0$, whence
        \begin{equation*}
            \hat{\chi}_{n,m}(i,j) \leq \chi(i,j) + \delta  = \delta.
        \end{equation*}
        So \ref{lem:stat_gua_K_I_1} holds.

        To prove \ref{lem:stat_gua_K_I_2}, taking $j \in I_a$ gives $\chi(i,j) = 1$, then
        \begin{equation*}
            1-\hat{\chi}_{n,m}(i,j) \leq 1 - \chi(i,j) + \delta = \delta,
        \end{equation*}
        and hence \ref{lem:stat_gua_K_I_2} holds.

        To obtain \ref{lem:stat_gua_K_I_3}, for $k \notin I_a$, Condition \ref{cond:SSC1} implies
        \begin{equation*}
            \chi(i,j) = A_{ka} < 1-2\delta.
        \end{equation*}
        Next, we obtain
        \begin{equation*}
            1-\hat{\chi}_{n,m}(i,j) \geq 1-\delta - \chi(i,j) = 1-\delta - A_{ka} > \delta.
        \end{equation*}
    \end{proof}

    \begin{proof}[Proof of Theorem \ref{thm:stat_K_I}] We work on the event $\mathcal{E}$ throughout the proof. Without loss of generality, we assume that the label permutation $\pi$ is the identity. Following \cite{bing2020adaptative}, we start by point out that the three following claims are sufficient to prove \ref{thm:stat_K_I_1}-\ref{thm:stat_K_I_3}. Let $[\hat{K}]$ and $\hat{I}^{(i)}$ be respectively the set of integers in the maximum clique $\bar{\mathcal{G}}$ of $G$ given in Step 4 of Algorithm \ref{alg:pure_variable} and the set defined in Step 6 of Algorithm \ref{alg:pure_variable}.
        \begin{enumerate}[label=\textcolor{frenchblue}{\bf(\arabic*)}]
            \item For any $i \in J$, we have \textcolor{frenchblue}{Pure(i)} = \textcolor{frenchblue}{False}; \label{proof_thm:stat_K_I_1}
            \item For any $i \in I_a$ and $a \in [K]$, we have \textcolor{frenchblue}{Pure(i)} = \textcolor{frenchblue}{True}, $I_a = \hat{I}^{(i)}$. \label{proof_thm:stat_K_I_2}
        \end{enumerate}
        To prove \ref{proof_thm:stat_K_I_1}, let $i \in J$ be fixed. We first prove that \textcolor{frenchblue}{Pure(i)} = \textcolor{frenchblue}{False} under $\hat{I}^{(i)} \cap I \neq \emptyset$. Under this hypothesis, we have $[\hat{K}] \subseteq [K]$ and no variables $i \in J$ belongs to $[\hat{K}]$ by Step 4 of Algorithm \ref{alg:pure_variable}. Now, if $i$ was taken at Step 6 of Algorithm \ref{alg:pure_variable}, then by $\hat{I}^{(i)} \cap I \neq \emptyset$, there exists $b \in [K]$ and $j \in I_b$ such that
        \begin{equation*}
            1 - \hat{\chi}_{n,m}(i,j) \leq  \delta,
        \end{equation*}
        which is prevented from \ref{lem:stat_gua_K_I_3} of Lemma \ref{lem:stat_gua_K_I}. This shows that for any $i \in J$, if $\hat{I}^{(i)} \cap I \neq \emptyset$, then \textcolor{frenchblue}{Pure(i)} = \textcolor{frenchblue}{False}. 
        
        Therefore to complete the proof of \ref{proof_thm:stat_K_I_1}, we show $\hat{I}^{(i)} \cap I \neq \emptyset$ is impossible when $i \in J$, under our assumptions. By construction of the algorithm, we have to verify that no $i \in J$ belongs to $i \in [\hat{K}]$ in Step 4 of Algorithm \ref{alg:pure_variable}. 
        We have, using \ref{lem:stat_gua_K_I_1} of Lemma \ref{lem:stat_gua_K_I}, for every $k \in I_a$ and $j \in I_b$ with $a,b \in [K]$
        \begin{equation*}
            \hat{\chi}_{n,m}(k,j) \leq  \delta.
        \end{equation*}
        Hence $[K]$ is a clique and $ [K] \subseteq [\hat{K}]$. 
        Now suppose $i \in [\hat{K}]$ while $i \in J$, then we have
        \begin{equation}
            \label{eq:proof_thm_stat_K_I}
            \hat{\chi}_{n,m}(i,j) \leq \delta \textrm{ for any } j \in [\hat{K}], \, j \neq i.
        \end{equation}
        Take $k \in I_{a^*}$ and $j \in I_{b^*}$ for $a^*, b^* \in [K]$ such that $A_{ia^*} > 2\delta$ and $A_{ib^*} > 2 \delta$ where the existence of such indices in $[K]$ is guaranteed by Condition \ref{cond:SSC2}. We hence obtain
        \begin{equation*}
            \chi(i,k) = \sum_{a \in s(i)} A_{ia} \wedge A_{ka} \geq A_{ia^*} \wedge A_{ka^*} = A_{ia^*} 
        \end{equation*}
        where the last inequality follows from $k \in I_{a^*}$. Hence,
        \begin{equation*}
            \chi(i,k) > A_{ia^*} > 2\delta
        \end{equation*}
        where the last inequality stems down from Condition \ref{cond:SSC2}. Then, under $\mathcal{E}$,
        \begin{equation*}
            \hat{\chi}_{n,m}(i,k) \geq \chi(i,k) - \delta > \delta.
        \end{equation*}
        The same arguments hold for $j \in I_{b^*}$ and hence
        \begin{equation*}
            \hat{\chi}_{n,m}(i,k) > \delta \textrm{ and } \hat{\chi}_{n,m}(i,j) > \delta,
        \end{equation*}
        which contradicts \eqref{eq:proof_thm_stat_K_I} and guarantees that $\hat{I}^{(i)} \cap I = \emptyset$ is impossible when $i \in J$. Indeed, the maximum clique that we can obtain from Step 4 of Algorithm \ref{alg:pure_variable} is $[\hat{K}] \setminus \{i\}$ by the above inequality.
        
        To prove \ref{proof_thm:stat_K_I_2}, since $\hat{I}^{(i)} \cap I \neq \emptyset$ with $i \in I_a$ under $\mathcal{E}$ from the discussion of \ref{proof_thm:stat_K_I_1}, then the statement of \ref{proof_thm:stat_K_I_2} should only be verified at step 6 of the algorithm since only pure variables are gathered at step 4 of the algorithm. Now, from step 6 of Algorithm \ref{alg:pure_variable}, we have to show that
        \begin{equation*}
            1-\hat{\chi}_{n,m}(i,j) \leq \delta,
        \end{equation*}
        for any $j \in \hat{I}^{(i)}$ and $j \in I_a$. Since $i \in I_a$, \ref{lem:stat_gua_K_I_2} in Lemma \ref{lem:stat_gua_K_I} states that the above inequality stands. Thus we have shown that for any $i \in I_a$, \textcolor{frenchblue}{Pure(i)} = \textcolor{frenchblue}{True}. We conclude the proof.
    \end{proof}

    \subsection{Proof of Section \ref{subsec:stat_guarantees_A}}

    \begin{proof}[Proof of Theorem \ref{thm:stat_A}]
    \paragraph{Proof of \ref{item_i:thm_stat_A_upper_bound}}
        The proof of Theorem \ref{thm:stat_A}, item \ref{item_i:thm_stat_A_upper_bound} implies two steps:
        \begin{enumerate}[label=\textcolor{frenchblue}{($S\arabic*$)}]
            \item We write $\bar{A} = AP$, and prove the first error bound for $\hat{A}_{\hat{I}}- \bar{A}_{\hat{I}}$.; \label{thm:stat_A_1}
            \item We prove the error bounds $\hat{A}_{\hat{J}} - \bar{A}_{\hat{J}}$. \label{thm:stat_A_2}
        \end{enumerate}
        For ease of the notation and without loss of generality, we make the blanket assumption that the permutation matrix $P$ is the identity so that $\bar{A} = A$ for the remainder of the proof. Let $s(j) = ||A_{j\cdot}||_0$ for $j =1,\dots,d$. For the first step \ref{thm:stat_A_1}, from the construction of $\hat{A}_{\hat{I}}$ and parts \ref{thm:stat_K_I_1}-\ref{thm:stat_K_I_3} in Theorem \ref{thm:stat_K_I}, we have for any $i \in \hat{I}_a$ and the definition of $I$ implies $A_{ia} = 1$. Then
        \begin{equation*}
            ||\hat{A}_{\hat{I}} - A_{\hat{I}}||_{\infty} = \underset{j \in \hat{I}}{\max} ||\hat{A}_{j\cdot}-A_{j\cdot} ||_{\infty} = 0
        \end{equation*}
        Then for any $j \in \hat{I}$, we have
        \begin{equation*}
            ||\hat{A}_{j\cdot} - A_{j\cdot} ||_2 = 0.
        \end{equation*}
        For the second step of the proof \ref{thm:stat_A_2}, we will make use of the results of the Lemma stated here first and proved at the end of this section.
        \begin{lemma}
            \label{lem:stat_A}
            Under the conditions of Theorem \ref{thm:stat_A}, on the event $\mathcal{E}$, we have $\beta^{(j)}_a = 0$ implies $\bar{\beta}^{(j)}_a = 0$, for any $j \in \hat{J}$ and $a \in [\hat{K}]$.
        \end{lemma}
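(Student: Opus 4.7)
\noindent\textbf{Proof proposal for Lemma \ref{lem:stat_A}.} The plan is to work entirely on the event $\mathcal{E}$ and exploit Theorem \ref{thm:stat_K_I}, which under the stated assumptions delivers $\hat{K} = K$, $\hat{I} = I$, and $\hat{I}_a = I_{\pi(a)}$ for some permutation $\pi$. Without loss of generality one takes $\pi$ equal to the identity, so that $\hat{I}_a = I_a$ and $\hat{J} = J$ for every $a \in [K]$. This reduces the problem to an entrywise deterministic computation on $\mathcal{E}$.

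Next, I would translate the hypothesis $\beta^{(j)}_a = A_{ja} = 0$ into a statement about the true extremal correlations. For any $i \in I_a$ and $j \in J$, purity gives $A_{ib} = \mathds{1}_{\{b=a\}}$, so Theorem \ref{thm:matrix_product} yields
\begin{equation*}
    \chi(i,j) \;=\; \sum_{k=1}^K A_{ik}\wedge A_{jk} \;=\; 1 \wedge A_{ja} \;=\; A_{ja}.
\end{equation*}
Consequently $A_{ja}=0$ forces $\chi(i,j)=0$ for \emph{every} $i \in I_a$.

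The third step is to propagate this to the empirical averages $\bar{\chi}^{(j)}_a$. On $\mathcal{E}$ one has $|\hat{\chi}_{n,m}(i,j) - \chi(i,j)| \leq \delta$ uniformly in $(i,j)$; combined with the previous display this gives $\hat{\chi}_{n,m}(i,j) \leq \delta$ for each $i \in I_a$. Averaging over $i \in \hat{I}_a = I_a$ gives
\begin{equation*}
    \bar{\chi}^{(j)}_a \;=\; \frac{1}{|\hat{I}_a|}\sum_{i \in \hat{I}_a} \hat{\chi}_{n,m}(i,j) \;\leq\; \delta.
\end{equation*}
The conclusion is then immediate from the very definition of the hard-thresholded estimator: $\bar{\beta}^{(j)}_a = \bar{\chi}^{(j)}_a \mathds{1}_{\{\bar{\chi}^{(j)}_a > \delta\}} = 0$.

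I do not anticipate a real obstacle here; the lemma is essentially a bookkeeping consequence of Theorem \ref{thm:stat_K_I} (to identify the estimated partition with the true one), Theorem \ref{thm:matrix_product} (to read off $A_{ja}$ from pairwise extremal correlations with pure variables), and the concentration event $\mathcal{E}$ (to transfer the exact equality $\chi(i,j)=0$ to the approximate bound $\hat{\chi}_{n,m}(i,j) \leq \delta$). The only subtlety worth flagging is the role of the permutation $\pi$, which must be handled once at the outset so that $\hat{I}_a$ can be identified with $I_a$ throughout the argument.
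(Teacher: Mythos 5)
Your proposal is correct and follows essentially the same route as the paper: reduce to $A_{ja}=0$, use purity to get $\chi(i,j)=A_{ja}$ for $i\in I_a$, transfer to $\hat{\chi}_{n,m}(i,j)$ on $\mathcal{E}$, average over $\hat{I}_a=I_a$, and conclude from the hard-thresholding rule. Your version is in fact slightly tighter than the paper's write-up, which loosely bounds $\bar{\chi}^{(j)}_a$ by $2\delta$ even though the bound $\bar{\chi}^{(j)}_a\leq\delta$ (which both arguments actually yield, since $A_{ja}=0$) is what is needed to make the indicator $\mathds{1}_{\{\bar{\chi}^{(j)}_a>\delta\}}$ vanish.
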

        Let us head into the proof of \ref{thm:stat_A_2}. For each $j \in \hat{J}$, we have by the very nature of our estimator:
        \begin{equation*}
            ||\hat{A}_{j\cdot} - A_{j\cdot}||_2 = ||\hat{\beta}^{(j)} - \beta^{(j)}||_2 \leq ||\bar{\beta}^{(j)} - \beta^{(j)}||_2 + ||\hat{\beta}^{(j)} - \bar{\beta}^{(j)}||_2.
        \end{equation*}
        Because this is a projection
		\begin{equation*}
			||\left.\hat{\beta}^{(j)}\right|_{\widehat{\mathcal{S}}} - \left.\bar{\beta}^{(j)}\right|_{\widehat{\mathcal{S}}}||_2 \leq ||\left.\beta^{(j)}\right|_{\widehat{\mathcal{S}}} - \left.\bar{\beta}^{(j)}\right|_{\widehat{\mathcal{S}}}||_2,
		\end{equation*}   
        hence
        \begin{equation*}
            ||\hat{\beta}^{(j)} - \bar{\beta}^{(j)}||_2 \leq ||\beta^{(j)} - \bar{\beta}^{(j)}||_2.
        \end{equation*}
       Then
        \begin{equation*}
            ||\hat{A}_{j\cdot} - A_{j\cdot}||_2 = ||\hat{\beta}^{(j)} - \beta^{(j)}||_2 \leq 2||\bar{\beta}^{(j)} - \beta^{(j)}||_2.
        \end{equation*}
        Furthermore, we can show that
        \begin{equation*}
            ||\bar{\beta}^{(j)} - \beta^{(j)}||_\infty \leq 2\delta,
        \end{equation*}
        indeed, for any $a \in \{1,\dots,K\}$ with $\bar{\chi}^{(j)}_a > \delta$ we have
        \begin{equation*}
            |\bar{\beta}^{(j)}_a - \beta^{(j)}_a| = \left| \frac{1}{|\hat{I}_a|} \sum_{i \in \hat{I}_a} \hat{\chi}_{n,m}(i,j) - A_{ja} \right| \leq \frac{1}{|\hat{I}_a|}\sum_{i \in \hat{I}_a} \left| \hat{\chi}_{n,m}(i,j) - A_{ja} \right|.
        \end{equation*}
        By Theorem \ref{thm:stat_K_I}, $\hat{I} = I$, then if $i \in I_a$, then $A_{ja} = \chi(i,j)$ and as we are on the event $\mathcal{E}$, we obtain that
        \begin{equation*}
            |\hat{\chi}_{n,m}(i,j) - \chi(i,j)| \leq \delta.
        \end{equation*}
        Thus,
        \begin{equation*}
            |\bar{\beta}^{(j)}_a - \beta^{(j)}_a| \leq 2 \delta,
        \end{equation*}
        whenever $a \in \{1,\dots,K\}$ with $\bar{\chi}^{(j)}_a > \delta$. Now take $a \in \{1,\dots,K\}$ such that $\bar{\chi}_a^{(j)}\leq \delta$, we obtain
        \begin{equation*}
            |\bar{\beta}^{(j)}_a - \beta^{(j)}_a| = A_{ja}.
        \end{equation*}
        If $i \in I_a$, then $\chi(i,j) = A_{ja}$ and under the event $\mathcal{E}$, we obtain
        \begin{equation*}
            A_{ja} \leq \hat{\chi}_{n,m}(i,j) + \delta.
        \end{equation*}
        Then for any $i \in I_a$
        \begin{equation*}
            A_{ja} \leq \frac{1}{|\hat{I}_a|} \sum_{i \in \hat{I}_a} \hat{\chi}_{n,m}(i,j) + \delta = \bar{\chi}^{(j)}_a + \delta \leq 2 \delta.
        \end{equation*}
        Hence, we have, as stated
        \begin{equation*}
            ||\bar{\beta}^{(j)} - \beta^{(j)}||_\infty \leq 2\delta.
        \end{equation*}
        Then following Lemma \ref{lem:stat_A} and using $\hat{K} = K$ on the event $\mathcal{E}$ gives
        \begin{equation*}
            ||\bar{\beta}^{(j)} - \beta^{(j)}||_2 = \left( \sum_{a=1}^K |\bar{\beta}^{(j)}_a - \beta^{(j)}_a|^2 \right)^{1/2} = \left( \sum_{a \in s(j)} |\bar{\beta}^{(j)}_a - \beta^{(j)}_a|^2 \right)^{1/2} \leq 2 \sqrt{s(j)} \delta.
        \end{equation*}
        This completes the proof of the last step and of Theorem \ref{thm:stat_A} \ref{item_i:thm_stat_A_upper_bound}.
    \paragraph{Proof of \ref{item_ii:thm_stat_A_supp_recov}.} In the initial stage of the proof, let us demonstrate that, under the event $\mathcal{E}$
    \begin{equation}
        \label{eq:supp_recov_proj_hard}
        \textrm{supp}(\hat{\beta}^{(j)}) = \textrm{supp}(\bar{\beta}^{(j)}), \, \forall j \in \{1,\dots,d\}.
    \end{equation}
    Through our initial construction, we immediately obtain that $\hat{\beta}^{(j)}_a = 0$ whenever $\bar{\beta}^{(j)}_a = 0$ for any $a \in [\hat{K}]$. Now, let us consider any $a \in [\hat{K}]$ with $\bar{\beta}^{(j)}_a > 0$, a condition equivalent to $\bar{\beta}^{(j)}_a > \delta$. Our task is to establish that $\bar{\beta}^{(j)}_a > \tau$ where
    \begin{equation*}
        \tau := \frac{1}{\rho} \left(\sum_{a=1}^\rho \bar{\beta}^{(j)}_a -1 \right) \textrm{ and } \rho =\max \left\{ p \in \textrm{supp}(\bar{\beta}^{(j)}) : \bar{\beta}_p^{(j)} > \frac{1}{p} \left(\sum_{a=1}^p \bar{\beta}^{(j)}_a -1 \right) \right\}.
    \end{equation*}
    Let us show that $p = \textrm{supp}(\bar{\beta}^{(j)})$. Indeed for any $a \in \textrm{supp}(\bar{\beta}^{(j)})$
    \begin{equation*}
        \bar{\beta}^{(j)}_a = \frac{1}{|\hat{I}_a|} \sum_{i \in \hat{I}_a} \hat{\chi}_{n,m}(i,j).
    \end{equation*}
    If $i \in I_a$, then $\chi(i,j) = A_{ja}$ and we obtain the following inequality under the event $\mathcal{E}$
    \begin{equation*}
        \hat{\chi}_{n,m}(i,j) \leq A_{ja} + \delta.
    \end{equation*}
    We obtain simultaneously
    \begin{equation*}
    	\hat{\chi}_{n,m}(i,j) \leq A_{ja} + \delta, \; \forall i \in \hat{I}_a, \quad \bar{\beta}^{(j)}_a \leq A_{ja} + \delta, \; a \in \textrm{supp}(\bar{\beta}^{(j)}).
    \end{equation*}
    Summing across all instances of $a \in \textrm{supp}(\bar{\beta}^{(j)})$ and employing Condition \ref{cond:(i)},
    \begin{equation*}
        \sum_{a=1}^p \bar{\beta}^{(j)}_a \leq 1 + p \delta,
    \end{equation*}
    this leads us to achieve
    \begin{equation*}
        \frac{1}{p} \left( \sum_{a=1}^p \bar{\beta}^{(j)}_a -1 \right) \leq \delta.
    \end{equation*}
    Building upon our initial assumption, we can express
    \begin{equation*}
        \bar{\beta}^{(j)}_a > \delta \geq \frac{1}{p} \left( \sum_{a=1}^p \bar{\beta}^{(j)}_a -1 \right).
    \end{equation*}
    From this, we can infer $\rho = \textrm{supp}(\bar{\beta}^{(j)})$ but also $\bar{\beta}^{(j)}_a > \tau$, hence $\hat{\beta}^{(j)}_a > 0$. We obtain the result of the initial stage stated in \eqref{eq:supp_recov_proj_hard}.

    Let us remember that Lemma \ref{lem:stat_A} suggests $\textrm{supp}(\bar{\beta}^{(j)}) \subseteq \textrm{supp}(\beta^{(j)})$, and by the initial stage of the proof (see Equation \eqref{eq:supp_recov_proj_hard}), we infer $\textrm{supp}(\hat{\beta}^{(j)}) \subseteq \textrm{supp}(\beta^{(j)})$ for any $j \in \hat{J}$. Hence $\textrm{supp}(\hat{A}_{\hat{J}}) \subseteq \textrm{supp}(A_{\hat{J}})$. Furthermore Theorem \ref{thm:stat_K_I} provides the result that $\hat{I}_a = I_a$ for all $a\in [\hat{K}]$. From the way we construct $\hat{A}_{\hat{I}}$, we have $\textrm{supp}(\hat{A}_{\hat{I}}) \subseteq \textrm{supp}(A_{\hat{I}})$. Therefore, we have proved $\textrm{supp}(\hat{A}) \subseteq \textrm{supp}(A)$.

    On the contrary, considering any $j \in J_1$, we have the knowledge that $\beta^{(j)}_a > 2 \delta$ for every $a \in \textrm{supp}(\beta^{(j)})$. Exploiting this insight and the additional observation that $|\bar{\chi}^{(j)}_a - \beta^{(j)}_a| \leq \delta$, we deduce
    \begin{equation*}
        |\bar{\chi}^{(j)}_a| \geq |\beta^{(j)}_a| - |\bar{\chi}^{(j)}_a - \beta^{(j)}_a| > \delta,
    \end{equation*}
    which implies $\bar{\beta}^{(j)}_a > 0$, hence $\textrm{supp}(\beta^{(j)}) \subseteq \textrm{supp}(\bar{\beta}^{(j)})$ with $j \in J_1$. Using the initial stage of the proof, we have $\textrm{supp}(A_{J_1}) \subseteq \textrm{supp}(\hat{A}_{J_1})$.
    
    \paragraph{Proof of \ref{item_iii:thm_stat_A_clust_recov}.}
    	The proof is in the same line of \cite[Theorem 7]{bing2020adaptative}, we recall it for consistency. We first show that $TFPP(\widehat{\mathcal{G}}) = 0$. From the result of part \ref{item_ii:thm_stat_A_supp_recov}, we know that $\textrm{supp}(\hat{A}) \subseteq \textrm{supp}(A)$. Thus,
    	\begin{equation*}
    		\sum_{j \in [d], a \in [K]} \mathds{1}_{ \{ A_{ja} = 0, \hat{A}_{ja} > 0 \} },
    	\end{equation*}
    	which implies $TFPP(\widehat{\mathcal{G}}) = 0$.
    	In order to prove the result of $TFNP(\widehat{\mathcal{G}})$, observe
    	\begin{equation*}
    		\sum_{j \in [d], a \in [K]} \mathds{1}_{ \{ A_{ja} > 0 \} } = |I| + \sum_{j \in J} s(j),
    	\end{equation*}
    	with $s(j) = ||A_{j \cdot}||_0$ for each $j \in J$. For a given $I$, we partition $[d] = I \cup J_1 \cup (J \setminus J_1)$. Theorem \ref{thm:stat_K_I} implies that $\hat{I} = I$ and from the way we construct $\hat{A}_{\hat{I}}$, we have
    	\begin{equation*}
    		\sum_{j \in I} \mathds{1}_{ \{ \hat{A}_{ja} > 0, A_{ja} = 0  \} } =0.
    	\end{equation*} 
    	Next, we consider the set $J_1$. Part \ref{item_ii:thm_stat_A_supp_recov} gives $\normalfont{supp}(A_{J_1}) = supp(\hat{A}_{J_1})$ which yields
    	\begin{equation*}
    		\sum_{j \in J_1, a \in [K]}\mathds{1}_{ \{ \hat{A}_{ja} > 0, A_{ja} = 0 \} }=0.
    	\end{equation*}
    	Finally, we consider the set $J \setminus J_1$. By examining the proof of part \ref{item_ii:thm_stat_A_supp_recov}, we have necessarily $\hat{A}_{ja} > 0$ if $A_{ja} > 2\delta$ for any $j\in J_1$, and $a \in [K]$. Thus, 
    	\begin{equation*}
    		\sum_{j \in J \setminus J_1, a \in [K]} \mathds{1}_{ \{ A_{ja} > 0, \hat{A}_{ja} = 0 \} } \leq \sum_{j \in J \setminus J_1} t(j).
    	\end{equation*}
    	We hence obtain by combining the above inequalities
    	\begin{equation*}
    		TFNP(\widehat{\mathcal{G}}) = (\widehat{\mathcal{G}}) = \frac{\sum_{j\in [d], a \in [K]} \mathds{1}_{\{ A_{ja} > 0, \hat{A}_{ja} = 0 \}}}{\sum_{j\in [d], a \in [K]} \mathds{1}_{\{ A_{ja} > 0\}}} \leq \frac{\sum_{ j \in J \setminus J_1} t(j)}{|I| + \sum_{j\in J} s(j)} \leq \frac{\sum_{j \in J_1} t(j)}{|I| + \sum_{j \in J} s(j)}.
    	\end{equation*}
    \end{proof} 
    To conclude this section, we give below the proof of the intermediary result used in the proof.
    \begin{proof}[Proof of Lemma \ref{lem:stat_A}]
        Suppose that $\beta^{(j)}_a =0$, which implies, by definition, $A_{ja} = 0$. Now take $i \in  I_a$, we thus have under $\mathcal{E}$
        \begin{equation*}
            \hat{\chi}_{n,m}(i,j) \leq \chi(i,j) + \delta = A_{ja} + \delta \leq 2\delta.
        \end{equation*}
        We thus obtain, under the event $\mathcal{E}$ and $\beta^{(j)}_a = 0$, that
        \begin{equation*}
            \bar{\chi}^{(j)}_a = \frac{1}{|\hat{I}_a|} \sum_{i \in \hat{I}_a} \hat{\chi}_{n,m}(i,j) \leq 2 \delta
        \end{equation*}
        and we obtain
        \begin{equation*}
            \bar{\beta}_a^{(j)} = 0.
        \end{equation*}
    \end{proof}

    \section{Proofs of Section \ref{sec:num_res}}
    \label{sec:proof_num_res}

    Denote in this section by $M_n^{(a)}$ the maximum of $Z_i^{(a)}$, $1 \leq i \leq n$ and $1 \leq a \leq K$. Also, for convenience, we call a function $u$ on $\mathbb{R}$ a normalizing function if $u$ is non-decreasing, right continuous, and $u(x) \rightarrow \pm \infty$, as $x \rightarrow \pm \infty$.

    \begin{proposition}
    \label{prop:dma}
    Suppose $(\textbf{Z}_t, t \in \mathbb{Z})$ is a moving maxima process of order $p$ as described in \eqref{eq:moving_maxima} where margins of $\boldsymbol{\epsilon}_1$ are standard Pareto with an Archimedean copula function with upper tail equal to $1$, see \eqref{eq:rv}, then there exist sequences $u_n^{(a)}$, $1 \leq a \leq K$, such that
        \begin{equation*}
            \mathbb{P}\left\{ M_n^{(a)} \leq u_n^{(a)}(x^{(a)}), 1 \leq a \leq K \right\} \rightarrow \Pi_{a=1}^K e^{-\frac{1}{x^{(a)}}}, \quad \normalfont{\textbf{x}} \in (0,\infty)^K.
        \end{equation*}
    \end{proposition}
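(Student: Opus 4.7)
The plan is to combine a coordinate-wise sandwich for the maxima of the moving-maxima process with the classical asymptotic independence of Archimedean copulas under the upper-tail condition \eqref{eq:rv}. Given the standard Pareto marginals of $\boldsymbol{\epsilon}_1$, the natural choice of normalizing function is $u_n^{(a)}(x) = n x$.

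First, since $Z_t^{(a)} \geq \epsilon_t^{(a)}$ (take $\ell = 0$ in \eqref{eq:moving_maxima}) and $Z_t^{(a)} \leq \max_{t \leq s \leq t+p} \epsilon_s^{(a)}$ because $\rho^\ell \leq 1$, we obtain the coordinate-wise sandwich
\begin{equation*}
\max_{1 \leq t \leq n} \epsilon_t^{(a)} \;\leq\; M_n^{(a)} \;\leq\; \max_{1 \leq s \leq n+p} \epsilon_s^{(a)}, \qquad 1 \leq a \leq K.
\end{equation*}
Taking intersections over $a$, this yields
\begin{equation*}
\mathbb{P}(A_{n+p}) \;\leq\; \mathbb{P}\bigl\{ M_n^{(a)} \leq n x^{(a)},\, 1 \leq a \leq K \bigr\} \;\leq\; \mathbb{P}(A_n),
\end{equation*}
where $A_m := \{ \max_{1 \leq t \leq m} \epsilon_t^{(a)} \leq n x^{(a)},\, 1 \leq a \leq K \}$. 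It therefore suffices to show that $\mathbb{P}(A_m) \to \prod_{a=1}^K e^{-1/x^{(a)}}$ for every sequence $m = m(n)$ with $m/n \to 1$.

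Second, using the i.i.d. structure of the innovations and writing $C$ for the Archimedean copula with generator $\varphi$, the fact that $\epsilon_1^{(a)}$ is standard Pareto gives
\begin{equation*}
\mathbb{P}(A_m) = \bigl[\, C\bigl(1 - 1/(n x^{(1)}), \dots, 1 - 1/(n x^{(K)})\bigr)\,\bigr]^m,
\end{equation*}
and the Archimedean form yields $C(1 - s_1, \dots, 1 - s_K) = \varphi^{\leftarrow}\bigl(\sum_{a=1}^K \varphi(1 - s_a)\bigr)$ with $s_a := 1/(n x^{(a)})$. Condition \eqref{eq:rv} says exactly that the map $f(s) := \varphi(1 - s)$ is regularly varying at $0$ with index $1$. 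Applying this to the reference $s = 1/n$, we get $\varphi(1 - s_a)/\varphi(1 - s) \to 1/x^{(a)}$, whence
\begin{equation*}
\sum_{a=1}^K \varphi(1 - s_a) \,\sim\, T\,\varphi(1 - 1/n), \qquad T := \sum_{a=1}^K \tfrac{1}{x^{(a)}}.
\end{equation*}

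Third and most technical, I transfer this regular variation to $\varphi^{\leftarrow}$. Writing $\varphi^{\leftarrow}(y) = 1 - f^{\leftarrow}(y)$, the standard Karamata-type result that $f^{\leftarrow}$ is regularly varying at $0$ with index $1$ (an exact inverse of the index $1$ of $f$) gives $f^{\leftarrow}(T\,f(1/n)) = (T + o(1))/n$, so that
\begin{equation*}
C\bigl(1 - s_1, \dots, 1 - s_K\bigr) = 1 - T/n + o(1/n).
\end{equation*}
Raising to the $m$-th power with $m/n \to 1$ yields $\mathbb{P}(A_m) \to e^{-T} = \prod_{a=1}^K e^{-1/x^{(a)}}$, which closes the sandwich and establishes the proposition. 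The main obstacle is the careful handling of the regular variation in the last step, namely propagating the index-$1$ regular variation of $f$ to $f^{\leftarrow}$ and controlling the remainder $o(1/n)$ uniformly in the summands $s_a$; this is the only step that requires genuine extreme-value machinery, everything else reducing to Pareto computations and the sandwich.
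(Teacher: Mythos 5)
Your proof is correct, and it takes a genuinely different route from the paper. The paper's own argument goes through Hsing's multivariate extreme value theorem for stationary sequences (Theorem 5.2 of the cited 1989 paper): it computes a marginal limit under the normalization $u_n^{(a)}(x)=nx(1-\rho^{p+1})/(1-\rho)$, gets the mixing-type condition $D(u_n(x^{(1)}),\dots,u_n(x^{(K)}))$ for free from the $p$-dependence of $(\textbf{Z}_t)$, and spends most of its effort verifying the condition $D''$ via bivariate estimates $\mathbb{P}\{Z_1^{(1)}>u_n(x^{(1)}),\,Z_i^{(2)}>u_n(x^{(2)})\}=O(n^{-2})$, where the regular variation \eqref{eq:rv} of $s\mapsto\varphi(1-s)$ and of $1-\varphi^{\leftarrow}$ (inversion of regularly varying functions plus the uniform convergence theorem) enters exactly as in your third step. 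You bypass the stationary-sequence machinery entirely: the sandwich $\max_{1\le t\le n}\epsilon_t^{(a)}\le M_n^{(a)}\le\max_{1\le s\le n+p}\epsilon_s^{(a)}$ reduces the $p$-dependent moving maxima to the i.i.d.\ innovations, after which the joint limit is an exact Archimedean computation together with the same regular-variation transfer to $\varphi^{\leftarrow}$; your handling of that step is sound (monotonicity of $\varphi^{\leftarrow}$ plus the uniform convergence theorem, or a two-sided squeeze with $T\pm\delta$, controls the $o(1/n)$ remainder, and $f^{\leftarrow}(f(1/n))=1/n$ since the generator is continuous and strictly decreasing near $1$). What your approach buys is twofold: it is more elementary, needing no conditions $D$ and $D''$, and it automatically accounts for the extremal clustering of the moving-maxima process, identifying $u_n^{(a)}(x)=nx$ as a valid normalization (the extremal index $(1-\rho)/(1-\rho^{p+1})$ offsets the heavier marginal tail of $Z_1^{(a)}$, a cancellation that a computation based on the marginal law of $Z_1^{(a)}$ raised to the power $n$, as in the paper, must treat with care). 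Since the proposition only asserts the existence of sequences $u_n^{(a)}$ giving independent standard Fréchet limits, your argument fully establishes the statement.
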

    \begin{proof}
        This result is a direct application of \cite[Theorem 5.2]{hsing1989extreme} where most prominent arguments are taking from Examples in \cite[Section 6]{hsing1989extreme}. For definitions of conditions $D(u_n(x^{(1)}),\dots,u_n(x^{(K)}))$ and $D^{''}(u_n(x^{(1)}),\dots,u_n(x^{(K)}))$, we also refer to \cite{hsing1989extreme}. For $u_n^{(a)}(x) = \frac{nx(1-\rho^{p+1})}{1-\rho}$, $n \geq 1$, $x \in \mathbb{R}$, we obtain
        \begin{equation*}
            \mathbb{P}\left\{ M_n^{(a)} \leq u_n^{(a)}(x) \right\} = \left( \Pi_{\ell = 0}^p \mathbb{P}\left\{ \epsilon_1^{(a)} \leq \rho^{-\ell} u_n(x) \right\} \right)^n = \left( \Pi_{\ell=0}^p \left(1-\frac{\rho^\ell (1-\rho)}{nx(1-\rho^{p+1})} \right) \right)^n.
        \end{equation*}
        Noticing that
        \begin{equation*}
            \Pi_{\ell=0}^p \left(1-\frac{\rho^\ell (1-\rho)}{nx(1-\rho^{p+1})} \right) = \exp\left\{ \sum_{\ell=0}^p \ln \left(1-\frac{\rho^\ell (1-\rho)}{nx(1-\rho^{p+1})} \right) \right\},
        \end{equation*}
        and using $\exp\{x\} = 1+x+O(x^2)$ and $\ln(1-x) = -x+O(x^2)$ as $x \rightarrow 0$, we obtain that
        \begin{equation*}
            \Pi_{\ell=0}^p \left(1-\frac{\rho^\ell (1-\rho)}{nx(1-\rho^{p+1})} \right) = 1-\frac{1}{nx} + O\left(\frac{1}{n^2}\right),
        \end{equation*}
        hence
        \begin{equation*}
            \mathbb{P}\left\{ M_n^{(a)} \leq u_n^{(a)}(x) \right\} \underset{n \rightarrow \infty}{\longrightarrow} e^{-\frac{1}{x}} \mathds{1}_{\{x \geq 0 \}}.
        \end{equation*}
        Furthermore, since $\sigma(\textbf{Z}_t, t\leq 0)$ and $\sigma(\textbf{Z}_t, t\geq p+1)$ are two independent $\sigma$-fields, the condition $D(u_n(x^{(1)}),\dots,u_n(x^{(K)}))$ holds immediately for $(\textbf{Z}_t, t\in \mathbb{Z})$ for each $\textbf{x} \in \mathbb{R}^d$. Thus, it suffices to show that the condition $D^{''}(u_n(x^{(1)}),\dots,u_n(x^{(K)}))$ holds for each $\textbf{x} \in (0,\infty)^K$. For any fixed $\textbf{x} \in (0,\infty)^K$, one obtains simply the estimate
        \begin{align*}
            &1-\Pi_{\ell = 0}^{i-1} \mathbb{P}\left\{ \epsilon_{1,1} \leq \rho^{-\ell}u_n(x^{(1)}) \right\} = \frac{1-\rho^{i}}{1-\rho^{p+1}nx_1} + O\left(\frac{1}{n^2}\right), \\
            &1-\Pi_{\ell = p-i+1}^{p} \mathbb{P}\left\{ \epsilon_{1,2} \leq \rho^{-\ell}u_n(x^{(2)}) \right\} = \frac{\rho^{p-i+1}(1-\rho^{i})}{1-\rho^{p+1}nx_2} + O\left(\frac{1}{n^2}\right).
        \end{align*}
        Now, by \eqref{eq:rv}, one can obtain the following estimate:
        \begin{equation}
            \label{eq:prop_dma_i}
            1-\Pi_{\ell = p-i+1}^p \mathbb{P}\left\{ \epsilon_{1,1} \leq \rho^{-\ell-i}u_n(x^{(1)}), \epsilon_{1,2} \leq \rho^{-\ell}u_n(x^{(2)}) \right\} = \frac{\rho^i(1-\rho^{p-i+1})}{(1-\rho^{p+1})nx_1} + \frac{(1-\rho^{p-i+1})}{(1-\rho^{p+1})nx_2} + O\left( \frac{1}{n^2} \right),
        \end{equation}
        for all $n$. Indeed
        \begin{align*}
            &1-\mathbb{P}\left\{ \epsilon_{1,1} \leq \rho^{-\ell-i}u_n(x^{(1)}), \epsilon_{1,2} \leq \rho^{-\ell}u_n(x^{(2)}) \right\} = \\ &1-\varphi^\leftarrow\left( \varphi\left(1-\frac{\rho^{\ell + i}(1-\rho)}{(1-\rho^{p+1})nx_1} \right) +  \varphi\left(1-\frac{\rho^{\ell}(1-\rho)}{(1-\rho^{p+1})nx_2} \right)\right) = \\ & \frac{n}{n} \times \left(1-\varphi^\leftarrow\left( \varphi(1-\frac{1}{n}) \left\{\frac{\varphi\left(1-\frac{\rho^{\ell + i}(1-\rho)}{(1-\rho^{p+1})nx_1} \right)}{\varphi(1-\frac{1}{n})} +  \frac{\varphi\left(1-\frac{\rho^{\ell}(1-\rho)}{(1-\rho^{p+1})nx_2} \right)}{\varphi(1-\frac{1}{n})} \right\}\right) \right).
        \end{align*}
        The function $x \mapsto 1/\varphi(1-1/x)$ is regularly varying at infinity with index $1$. Therefore, its inverse function, the function $t \mapsto 1/(1-\varphi^\leftarrow(1/t))$ is regularly varying at infinity with index 1 (\cite[Theorem 1.5.12]{bingham1989regular}), and thus the function $1-\varphi^\leftarrow$ is regularly varying at zero with index $1$. We also have
        \begin{align*}
            &\frac{\varphi\left(1-\frac{\rho^{\ell + i}(1-\rho)}{(1-\rho^{p+1})nx_1} \right)}{\varphi(1-\frac{1}{n})} \underset{n \rightarrow \infty}{\longrightarrow} \frac{\rho^{\ell+i}(1-\rho)}{x_1}, \quad 
            \frac{\varphi\left(1-\frac{\rho^{\ell}(1-\rho)}{(1-\rho^{p+1})nx_2} \right)}{\varphi(1-\frac{1}{n})} \underset{n \rightarrow \infty}{\longrightarrow} \frac{\rho^\ell(1-\rho)}{x_2}.
        \end{align*}
        By the uniform convergence theorem (\cite[Theorem 1.5.2]{bingham1989regular}), the below term
        \begin{equation*}
            n \times \left(1-\varphi^\leftarrow\left( \varphi(1-\frac{1}{n}) \left\{\frac{\varphi\left(1-\frac{\rho^{\ell + i}(1-\rho)}{(1-\rho^{p+1})nx_1} \right)}{\varphi(1-\frac{1}{n})} + \frac{\varphi\left(1-\frac{\rho^{\ell}(1-\rho)}{(1-\rho^{p+1})nx_2} \right)}{\varphi(1-\frac{1}{n})} \right\}\right) \right)
        \end{equation*}
        converges to
        \begin{equation*}
            \frac{\rho^{\ell+i}(1-\rho)}{x_1}+\frac{\rho^\ell(1-\rho)}{x_2}.
        \end{equation*}
        And then, after elementary estimation, we obtain \eqref{eq:prop_dma_i}. Hence for $1 \leq i \leq p$, we have
        \begin{align*}
            &\mathbb{P}\left\{ Z_{1}^{(1)} > u_n(x^{(1)}), Z_{i}^{(2)} > u_n(x^{2)}) \right\} = \\ &1-\mathbb{P}\left\{ Z_{1}^{(1)} \leq u_n(x^{(1)}) \right\} - \mathbb{P}\left\{ Z_{1}^{(2)} \leq u_n(x^{(2)}) \right\} + \mathbb{P}\left\{ Z_{1}^{(1)} \leq u_n(x^{(1)}), Z_{i}^{(2)} \leq u_n(x^{(2)}) \right\} = \\ &1-\Pi_{\ell=0}^{p} \mathbb{P}\left\{ \epsilon_{1}^{(1)} \leq \rho^{-\ell} u_n(x^{(1)}) \right\} - \Pi_{\ell=0}^p \mathbb{P}\left\{ \epsilon_{1}^{(2)} \leq \rho^{-\ell} u_n(x^{(2)}) \right\} + \Pi_{\ell=0}^{i-1} \mathbb{P}\left\{ \epsilon_{1}^{(1)} \leq \rho^{-\ell} u_n(x^{(1)}) \right\} \times \\ & \Pi_{\ell = 0}^{p-i}\mathbb{P}\left\{ \epsilon_{1}^{(1)} \leq \rho^{-\ell - i}u_n(x^{(1)}), \epsilon_1^{(2)} \leq \rho^{-\ell} u_n(x^{(2)}) \right\} \Pi_{\ell = p-i+1}^{p} \mathbb{P}\left\{ \epsilon_{1}^{(2)} \leq \rho^{-\ell}u_n(x^{(2)}) \right\} = O \left( \frac{1}{n^2} \right).
        \end{align*}
        Since for $i > p$, we trivially obtain that
        \begin{equation*}
            \mathbb{P}\left\{ Z_{1}^{(1)} > u_n(x^{(1)}), Z_{i}^{(2)} > u_n(x^{(2)}) \right\} = O \left( \frac{1}{n^2} \right),
        \end{equation*}
        and noticing that $\epsilon_i^{(1)}$ and $\epsilon_i^{(2)}$ are playing symmetric role to $\epsilon_i^{(j)}$ and $\epsilon_i^{(k)}$ for $1 \leq j < k \leq K$, one concludes from this that the condition $D^{''}(u_n(x^{(1)}),\dots,u_n(x^{(K)}))$ holds for each $\textbf{x} \in (0,\infty)^K$. Hence, applying \cite[Theorem 5.2]{hsing1989extreme}, we obtain the result.
    \end{proof}

    We give below technical details on the heuristics $d_m = O(1/m)$ made in Section \ref{sec:num_res}. Let us consider the model in \eqref{eq:lfm} without noise, i.e.,  $\textbf{X} = A \textbf{Z}$. For $\theta > 0$, the Clayton copula is defined as
    \begin{equation*}
        C_{\theta}(u,v) = \left[ 1 + \{ (u^{-\theta} - 1) + (v^{-\theta} - 1) \} \right]^{-1/\theta}, \, (u,v) \in [0,1]^2.
    \end{equation*}
    The copula of the pair of componentwise maxima of an i.i.d. sample of size $m$ from continuous distribution with copula $C_\theta$ is equal to
    \begin{equation*}
        \{ C_\theta(u^{1/m}, v^{1/m}) \}^m = C_{\theta / m}(u,v).
    \end{equation*}
    For establishing the heuristic, consider $a, b \in [K]$ with $a \neq b$ and $i \in I_a, j \in I_b$. Let $C_m^{(i,j)}$ denote the copula between the pair $M_m^{(i)}$ and $M_m^{(j)}$. Remember that these maxima are drawn from Pareto distributions, appropriately scaled in the independent setting by $m$. Denote $C_{\infty}^{(i,j)}$ the extreme value copula between two independent standard Fréchet. The pre-asymptotic madogram is hence defined by
    \begin{align*}
    	\nu_m(i,j) &= \frac{1}{2} - \int_0^1 C_m^{(i,j)}(u,u)du = \frac{1}{2} - \int_0^1 C_\theta^{(i,j)} \left(u^{1/m},u^{1/m} \right)^m du \\
    			   &= \frac{1}{2} - \int_0^1 C_{\theta/m}^{(i,j)}(u,u)du.
    \end{align*}
    Using the same computations, we obtain for the madogram
    \begin{align*}
    	\nu(i,j) = \frac{1}{2} - \int_0^1 C_{\infty}^{(i,j)}(v,v)dv.
    \end{align*}
    Since $C_{\theta}$ is positive lower orthant dependent for any $\theta > 0$, it follows that $C_{\theta/m}$ is positive lower orthant dependent for any $m\geq 1$ and $\theta > 0$. We hence obtain that $\nu_m(i,j) \in [0,1/6]$ and using that the function $f(x) = (0.5+x)/(0.5-x)$ is Lipschitz for $x \in [0,1/6]$, we obtain
    \begin{align*}
    	\left| \chi_m(i,j) - \chi(i,j) \right| = \left| \frac{0.5+\nu_m(i,j)}{0.5-\nu_m(i,j)} - \frac{0.5+\nu(i,j)}{0.5-\nu(i,j)} \right| \leq 9 |\nu_m(i,j) - \nu(i,j)|.	
	\end{align*}
	Now
	\begin{align*}
		|\nu_m(i,j) - \nu(i,j)| = &\bigg| \int_{0}^1 C_{\theta/m}^{(i,j)}\left(u,u\right)du - \int_0^1 C_{\infty}^{(i,j)}(u,u)du \bigg|.
	\end{align*}
	Now, applying \cite[Proposition 4.3]{bucher2014extreme}, we obtain
	\begin{align*}
		\int_0^1 \left| C_{\theta/m}^{(i,j)}(u,u) - C_\infty^{(i,j)}(u,u) \right| dv \leq \underset{u,v \in [0,1]^2}{\sup} \, \left| C_{\theta/m}^{(i,j)}(u,v) - C_\infty^{(i,j)}(u,v) \right| = O\left(\frac{1}{m}\right).
	\end{align*}
    \section{Supplementary Lemmata}
    \begin{lemma}
		\label{lem:tail_balance_condition}
		Let $\normalfont{\textbf{X}} = \normalfont{\textbf{X}}_1 + \normalfont{\textbf{X}}_2$ where both $\normalfont{\textbf{X}}_1$ and $\normalfont{\textbf{X}}_2$ are regularly varying with respective exponent measures $\Lambda_{\normalfont{\textbf{X}}_1}$, $\Lambda_{\normalfont{\textbf{X}}_2}$ and the following tail balance condition holds:
		\begin{equation}
			\label{eq:tail_balance_condition}
				\underset{x \rightarrow \infty}{\lim} \, \frac{\mathbb{P}\left\{ ||\normalfont{\textbf{X}}_2|| > x \right\}}{ \mathbb{P}\left\{ ||\normalfont{\textbf{X}}_1|| > x \right\} }=0.
		\end{equation}
		Then $\normalfont{\textbf{X}}$ is regularly varying with exponent measure $\Lambda_{\normalfont{\textbf{X}}}$ given by
		\begin{equation*}
			\Lambda_{\normalfont{\textbf{X}}} = \Lambda_{\normalfont{\textbf{X}}_1}.
		\end{equation*}
	\end{lemma}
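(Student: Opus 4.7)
The plan is to verify the defining vague convergence of regular variation for $\mathbf{X} = \mathbf{X}_1 + \mathbf{X}_2$ by approximating the event $\{(\mathbf{X}_1 + \mathbf{X}_2)/b(t) \in A\}$ by $\{\mathbf{X}_1/b(t) \in A\}$, the correction being absorbed either by the tail balance condition \eqref{eq:tail_balance_condition} or by the continuity of the exponent measure $\Lambda_{\mathbf{X}_1}$ on the boundary $\partial A$. Choose a normalising scale $b(t)\to\infty$ such that $t\,\mathbb{P}\{\|\mathbf{X}_1\|>b(t)\}\to 1$, so that $t\,\mathbb{P}\{\mathbf{X}_1/b(t)\in\cdot\}\stackrel{v}{\to}\Lambda_{\mathbf{X}_1}$ on $\mathbb{R}^d\setminus\{0\}$.

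Fix a Borel set $A$ bounded away from the origin with $\Lambda_{\mathbf{X}_1}(\partial A)=0$, and, for $\varepsilon>0$ small, introduce the outer and inner $\varepsilon$-perturbations $A_\varepsilon^{+}=\{x:\mathrm{dist}(x,A)<\varepsilon\}$ and $A_\varepsilon^{-}=\{x\in A:\mathrm{dist}(x,A^c)>\varepsilon\}$, which remain bounded away from $0$ for $\varepsilon$ small enough. The key set-theoretic sandwich is
\begin{equation*}
\bigl\{\mathbf{X}_1/b(t)\in A_\varepsilon^{-}\bigr\}\cap\bigl\{\|\mathbf{X}_2\|\leq\varepsilon b(t)\bigr\}\;\subseteq\;\bigl\{(\mathbf{X}_1+\mathbf{X}_2)/b(t)\in A\bigr\}\;\subseteq\;\bigl\{\mathbf{X}_1/b(t)\in A_\varepsilon^{+}\bigr\}\cup\bigl\{\|\mathbf{X}_2\|>\varepsilon b(t)\bigr\}.
\end{equation*}

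Multiplying by $t$ and letting $t\to\infty$, the regular variation of $\mathbf{X}_1$ yields $t\,\mathbb{P}\{\mathbf{X}_1/b(t)\in A_\varepsilon^{\pm}\}\to\Lambda_{\mathbf{X}_1}(A_\varepsilon^{\pm})$. The error term is handled as follows: since $\|\mathbf{X}_1\|$ is regularly varying, $t\,\mathbb{P}\{\|\mathbf{X}_1\|>\varepsilon b(t)\}$ converges to a finite positive constant, and then
\begin{equation*}
t\,\mathbb{P}\{\|\mathbf{X}_2\|>\varepsilon b(t)\}\;=\;t\,\mathbb{P}\{\|\mathbf{X}_1\|>\varepsilon b(t)\}\cdot\frac{\mathbb{P}\{\|\mathbf{X}_2\|>\varepsilon b(t)\}}{\mathbb{P}\{\|\mathbf{X}_1\|>\varepsilon b(t)\}}\;\longrightarrow\;0
\end{equation*}
thanks to \eqref{eq:tail_balance_condition} applied at $x=\varepsilon b(t)\to\infty$. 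Thus both inclusions yield, after taking $\limsup$ and $\liminf$,
\begin{equation*}
\Lambda_{\mathbf{X}_1}(A_\varepsilon^{-})\;\leq\;\liminf_{t\to\infty} t\,\mathbb{P}\bigl\{(\mathbf{X}_1+\mathbf{X}_2)/b(t)\in A\bigr\}\;\leq\;\limsup_{t\to\infty} t\,\mathbb{P}\bigl\{(\mathbf{X}_1+\mathbf{X}_2)/b(t)\in A\bigr\}\;\leq\;\Lambda_{\mathbf{X}_1}(A_\varepsilon^{+}).
\end{equation*}

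Finally, let $\varepsilon\downarrow 0$: since $\Lambda_{\mathbf{X}_1}(\partial A)=0$, both $\Lambda_{\mathbf{X}_1}(A_\varepsilon^{\pm})\to\Lambda_{\mathbf{X}_1}(A)$ by monotone approximation to the boundary. This proves $t\,\mathbb{P}\{(\mathbf{X}_1+\mathbf{X}_2)/b(t)\in A\}\to\Lambda_{\mathbf{X}_1}(A)$ for every continuity set of $\Lambda_{\mathbf{X}_1}$ bounded away from $0$, which is exactly the vague convergence defining regular variation of $\mathbf{X}$ with exponent measure $\Lambda_{\mathbf{X}_1}$. The only delicate step is the boundary argument in the last limit: it requires $A$ to be a $\Lambda_{\mathbf{X}_1}$-continuity set, which is the usual hypothesis built into the definition of vague convergence and imposes no real restriction on the validity of the statement.
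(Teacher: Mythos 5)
Your proof is correct, but it takes a genuinely different route from the paper. The paper works with the joint vector: it shows that $\Lambda_x(\cdot)=\mathbb{P}\{x^{-1}(\mathbf{X}_1,\mathbf{X}_2)\in\cdot\}/\mathbb{P}\{\|\mathbf{X}_1\|>x\}$ converges to the product measure $\Lambda_{\mathbf{X}_1}\otimes\delta_0$, by testing against bounded uniformly continuous functions supported away from zero, invoking characterisation lemmas from Kulik and Soulier (their Lemmas B.1.29 and B.1.31) together with a relative-compactness argument, and only then reads off the exponent measure of the sum. You instead argue directly on $\mathbf{X}_1+\mathbf{X}_2$ via the classical ``one large jump'' sandwich: squeeze $\{(\mathbf{X}_1+\mathbf{X}_2)/b(t)\in A\}$ between events involving the inner and outer $\varepsilon$-perturbations $A_\varepsilon^{\mp}$ of $A$, kill the correction term $t\,\mathbb{P}\{\|\mathbf{X}_2\|>\varepsilon b(t)\}$ with the tail-balance ratio, and let $\varepsilon\downarrow 0$ using $\Lambda_{\mathbf{X}_1}(\partial A)=0$. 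Your argument is more elementary and self-contained (no appeal to the $M_0$-convergence toolbox or to relative compactness), and it is in fact slightly more general: you never use regular variation of $\mathbf{X}_2$, only the ratio condition \eqref{eq:tail_balance_condition}, whereas the paper's statement and proof carry that hypothesis along. What the paper's route buys is the stronger intermediate conclusion that the pair $(\mathbf{X}_1,\mathbf{X}_2)$ is jointly regularly varying with limit $\Lambda_{\mathbf{X}_1}\otimes\delta_0$, which is of independent use. One small point to tighten in your write-up: the convergence $t\,\mathbb{P}\{\mathbf{X}_1/b(t)\in A_\varepsilon^{\pm}\}\to\Lambda_{\mathbf{X}_1}(A_\varepsilon^{\pm})$ requires $A_\varepsilon^{\pm}$ themselves to be $\Lambda_{\mathbf{X}_1}$-continuity sets; this holds for all but countably many $\varepsilon$ (or can be bypassed by using the portmanteau upper bound for the closed set $\overline{A_\varepsilon^{+}}\subseteq A_{2\varepsilon}^{+}$ and the lower bound for the open set $A_\varepsilon^{-}$), so the gap is cosmetic, but it should be stated.
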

	Under Condition \eqref{eq:tail_balance_condition} one may expect that the tail behavior of $\textbf{X}$ is mainly influenced by that of $\textbf{X}_1$.
	\begin{proof}
		Without loss of generality, we suppose that $\textbf{X}_1$ is regularly varying with tail index $\alpha$ equal to unity. We must prove that the sequence of measure $\{ \Lambda_x \}$ defined by
		\begin{equation*}
			\Lambda_x(\cdot) = \mathbb{P}\left\{ x^{-1}(\textbf{X}_1, \textbf{X}_2) \in \cdot \right\} / \mathbb{P}\left\{ ||\textbf{X}_1|| > x \right\},
		\end{equation*}
		is the only possible limit along a subsequence by applying \cite[Lemma B.1.29]{kulik2020heavy} and that the measure $\Lambda_{(\textbf{X}_1,\textbf{X}_2)}$ defined by $\Lambda_{(\textbf{X}_1,\textbf{X}_2)} = \Lambda_{\textbf{X}_1} \otimes \delta_0$ is the only possible limit along a subsequence by applying \cite[Lemma B.1.31]{kulik2020heavy}. Let $f$ be a bounded uniformly continuous function with support in $A_1 \times \mathbb{R}^d$ with $A_1$ separated from zero. Fix $\epsilon > 0$. Then there exists $||\textbf{x}_2|| \leq \eta$ which implies $||f(\textbf{x}_1, \textbf{x}_2) - f(\textbf{x}_1,0)|| \leq \epsilon$. By Condition \eqref{eq:tail_balance_condition} and since $A_1$ is separated from zero, we have
\begin{align*}
	\underset{x \rightarrow \infty}{\lim} \, \Lambda_x(f) &= \underset{x \rightarrow \infty}{\lim}\, \mathbb{E}\left[ f(x^{-1}(\textbf{X}_1, \textbf{X}_2)) \mathds{1}_{ \{ ||\textbf{X}_2|| > \eta x \} } \right] / \mathbb{P}\left\{ ||\textbf{X}_1 || > x \right\} \\
	&= \underset{x \rightarrow \infty}{\lim} \, \frac{\mathbb{E}\left[ f(x^{-1}(\textbf{X}_1, \textbf{X}_2)) \mathds{1}_{ \{ ||\textbf{X}_2|| > \eta x \} } \right]}{\mathbb{P}\left\{ ||\textbf{X}_2 || > x \right\}} \underset{x \rightarrow \infty}{\lim}\, \frac{\mathbb{P}\left\{ ||\textbf{X}_2 || > x \right\}}{\mathbb{P}\left\{ ||\textbf{X}_1 || > x \right\}} = 0.
\end{align*}
For every $\eta > 0$, since $||\textbf{X}_1||$ and $||\textbf{X}_2||$ are both regularly varying conserving the same tail index, the assumption implies
\begin{align*}
	\underset{x \rightarrow \infty}{\lim} \, \frac{\mathbb{P}\left\{ ||\textbf{X}_2 || > \eta x \right\}}{\mathbb{P}\left\{ ||\textbf{X}_1 || > x \right\}} = \underset{x \rightarrow \infty}{\lim} \, \frac{\mathbb{P}\left\{ ||\textbf{X}_2 || > \eta x \right\}}{\mathbb{P}\left\{ ||\textbf{X}_1 || > \eta x \right\}} \underset{x \rightarrow \infty}{\lim} \frac{\mathbb{P}\left\{ ||\textbf{X}_1 || > \eta x \right\}}{\mathbb{P}\left\{ ||\textbf{X}_1 || > x \right\}} = 0 \times \eta^{-1} = 0.
\end{align*}
Then,
\begin{align*}
	&\underset{x \rightarrow \infty}{\lim \sup} \, \mathbb{E}\left[ ||f(x^{-1}(\textbf{X}_1, \textbf{X}_2) - f(x^{-1}(\textbf{X}_1,0)|| \right] / \mathbb{P} \left\{ ||\textbf{X}_1|| > x \right\} \\ 
	&\leq \underset{x \rightarrow \infty}{\lim \sup} \, \mathbb{E}\left[ ||f(x^{-1}(\textbf{X}_1, \textbf{X}_2)) - f(x^{-1}(\textbf{X}_1,0)) || \mathds{1}_{ \{ ||\textbf{X}_2|| \leq \eta x \} } \right] / \mathbb{P}\left\{ ||\textbf{X}_1|| > x \right\} \\ &+ \underset{x \rightarrow \infty}{\lim \sup} \, \mathbb{E}\left[ ||f(x^{-1}(\textbf{X}_1, \textbf{X}_2)) - f(x^{-1}(\textbf{X}_1,0)) || \mathds{1}_{ \{ ||\textbf{X}_2|| > \eta x \} } \right] / \mathbb{P}\left\{ ||\textbf{X}_1|| > x \right\} \leq \epsilon.
\end{align*}
Since $\epsilon$ is arbitrary and $\delta_0 \otimes \Lambda_{\textbf{X}_2}(f) = 0$, this proves that
\begin{align*}
	\underset{x \rightarrow \infty}{\lim} \, \Lambda_{(\textbf{X}_1, \textbf{X}_2)}(f) &= \underset{x \rightarrow \infty}{\lim} \, \mathbb{E}\left[ f(x^{-1}(\textbf{X}_1,0)) \right] / \mathbb{P}\left\{ ||\textbf{X}_1|| > x \right\} \\
	&= \Lambda_{\textbf{X}_1} \otimes \delta_0 (f) = \Lambda_{(\textbf{X}_1, \textbf{X}_2)}(f).
\end{align*}
If now $f(\textbf{x}_2, \textbf{x}_2) = g(\textbf{x}_2)$ with $g$ a continuous function with support separated from zero, then $\Lambda_{\textbf{X}_1} \otimes \delta_0 = 0$ and 
\begin{align*}
	\underset{x \rightarrow \infty}{\lim} \, \Lambda_x(f) &= \underset{x \rightarrow \infty}{\lim} \mathbb{E}\left[ f(x^{-1}(\textbf{X}_2,\textbf{X}_2)) \right] / \mathbb{P}\left\{ ||\textbf{X}_1|| > x \right\} = \underset{x \rightarrow \infty}{\lim}\, \mathbb{E}\left[ g(x^{-1}\textbf{X}_2) \right] / \mathbb{P}\left\{ ||\textbf{X}_1|| > x \right\} \\
	&= \underset{x \rightarrow \infty}{\lim} \, \frac{\mathbb{P}\left\{ ||\textbf{X}_2|| > x \right\}}{\mathbb{P}\left\{ ||\textbf{X}_1|| > x \right\}} \underset{x \rightarrow \infty}{\lim} \, \frac{\mathbb{E}\left[ g(x^{-1} \textbf{X}_2) \right]}{\mathbb{P}\left\{ ||\textbf{X}_2|| > x \right\}} \\
	&= 0 \times \Lambda_{\textbf{X}_2}(f) = \Lambda_{(\textbf{X}_1, \textbf{X}_2)}(f).
\end{align*}
This proves that $\Lambda_{\textbf{X}} = \Lambda_{\textbf{X}_1} \otimes \delta_0$ is the only possible limit for the sequence $\{ \Lambda_x \}$ along any subsequence. We must now prove that $\{ \Lambda\}_x$ is relatively compact. Define $U_n = \left\{ (\textbf{x}_1, \textbf{x}_2)  \, : \, ||\textbf{x}_1|| + ||\textbf{x}_2|| > e^n \right\}$. The sets $U_n$, $n \in \mathbb{Z}$ satisfy the assumptions of \cite[Lemma B.1.29]{kulik2020heavy}. This proves that the sequence $\{ \Lambda_x\}$ is relatively compact and we conclude that $\Lambda_x \overset{v^{\#}}{\rightarrow} \Lambda_{(\textbf{X}_1, \textbf{X}_2)}$. We directly obtain that for any $A \in \mathcal{B}(\mathbb{R}^d)$ separated from $0$
\begin{equation*}
	\Lambda_{\textbf{X}_1+\textbf{X}_2} (A) = \Lambda_{\textbf{X}_1}(A),
\end{equation*}
hence the result.
	\end{proof}
    We recall the following Bernstein inequality from \cite{merlevede2009bernstein} and Lemma S.2 in \cite{cordoni2023consistent} where we recall the proof of the second for consistency purposes.
    \begin{lemma}
        \label{lem:merlevede}
        Let $(Y_t)_{t \geq 1}$, be a sequence of mean zero, stationary random variables whose absolute values is uniformly bounded by $\bar{y} < \infty$, and with exponential decaying strong mixing coefficients. Then for $n \geq 4$ and $z \geq 0$, there is a constant $c_1 > 0$, depending only on the mixing coefficient and such that
        \begin{equation*}
            \mathbb{P}\left\{ \left| \frac{1}{n} \sum_{i=1}^n Y_i \right| \geq z \right\} \leq \exp\left\{ - \frac{c_1 n z^2}{\bar{y}^2+z\bar{y} \ln n (\ln \ln n)} \right\}
        \end{equation*}
    \end{lemma}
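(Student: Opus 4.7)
The proof of this Bernstein-type bound for $\alpha$-mixing sequences follows the classical blocking technique combined with a Berbee-style coupling. The plan is as follows.

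First, I would partition $\{1,\dots,n\}$ into consecutive alternating blocks of sizes $p$ (``large'') and $q$ (``small''), yielding roughly $k \approx n/(p+q)$ blocks of each kind. Writing $H_\ell$ (respectively $S_\ell$) for the sum of $Y_t$ over the $\ell$-th large (respectively small) block, this gives $\sum_{t=1}^n Y_t = \sum_\ell H_\ell + \sum_\ell S_\ell$ up to a boundary remainder of length at most $p+q$. Since successive large blocks are separated by gaps of length $q$, I would then apply Berbee's coupling lemma iteratively to construct independent $\widetilde{H}_\ell$ with $\widetilde{H}_\ell \stackrel{d}{=} H_\ell$ and $\mathbb{P}\{H_\ell \neq \widetilde{H}_\ell\} \leq \alpha(q) \leq e^{-2cq}$. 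A union bound controls the total coupling failure probability by $k e^{-2cq}$.

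On the event where coupling succeeds, the large-block sum $\sum_\ell H_\ell$ coincides with the independent sum $\sum_\ell \widetilde{H}_\ell$, to which I would apply the classical Bernstein inequality for bounded independent variables: each $|\widetilde{H}_\ell| \leq p\bar{y}$, and one can obtain $\mathrm{Var}(\widetilde{H}_\ell)=\mathrm{Var}(H_\ell) \leq Cp\bar{y}^2$ via Rio's covariance inequality for $\alpha$-mixing sequences. The small-block contribution is deterministically bounded by $kq\bar{y}$, which can be absorbed into the deviation level $nz/2$ provided $q$ is not taken too large. Choosing $q$ proportional to $(\ln n)/c$ makes $k e^{-2cq}$ polynomially small in $n$, and the residual Bernstein bound on the independent blocks delivers the right-hand side of the claim.

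The principal technical obstacle, and the source of the peculiar $\ln(n)\ln\ln(n)$ factor in the denominator, is calibrating $p$ and $q$ simultaneously and uniformly in $z \geq 0$. Because the inequality must hold for all $z$, the block sizes cannot depend on $z$; the gap $q$ must therefore absorb both the polynomial coupling loss and the additional iterated-logarithm factor needed so that, over the full range of $z$, the sub-Gaussian exponent $n z^2/\bar{y}^2$ dominates the Poisson correction $zp\bar{y}/3$ that arises from the boundedness of the blocks. Careful bookkeeping of these three terms (coupling, variance, small-block truncation) is what produces the sharp form of the bound stated above; this is essentially the content of the Merlevède--Peligrad--Rio argument, which I would reproduce here.
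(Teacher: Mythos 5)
The paper does not prove this lemma at all: it is quoted verbatim from Merlev\`ede--Peligrad--Rio (2009), so the only honest comparison is between your sketch and the actual MPR argument, which is not the one you outline. Your plan has two genuine gaps. First, the coupling step is unavailable under the stated hypothesis: Berbee's lemma produces an independent copy with failure probability equal to the $\beta$-mixing (absolute regularity) coefficient, not the $\alpha$-mixing coefficient. The lemma assumes only strong mixing, for which no coupling with $\mathbb{P}\{H_\ell \neq \widetilde{H}_\ell\} \leq \alpha(q)$ exists; one must instead use Bradley- or Rio-type couplings, which only control $\mathbb{P}\{|H_\ell - \widetilde{H}_\ell| > \xi\}$ with losses depending on $\xi$ and the block length, and these losses wreck the bookkeeping you describe.

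Second, even granting an exact coupling with total failure probability of order $n e^{-2cq}$, a single-scale choice of $p$ and $q$ cannot give the stated bound uniformly in $z$, because the inequality has no additive remainder and its exponent reaches order $n/(\ln n \ln\ln n)$ when $z$ is comparable to $\bar{y}$. To make $n e^{-2cq}$ negligible against that, you need $q$ of order $n/(\ln n \ln\ln n)$, leaving only $O(\ln n \ln\ln n)$ blocks each bounded by $q\bar{y}$, and Bernstein over those blocks yields a denominator of order $\bar{y}^2 + z\bar{y}\, n/(\ln n \ln\ln n)$, far weaker than the claimed $\bar{y}^2 + z\bar{y} \ln n (\ln\ln n)$. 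Conversely, $q \asymp \ln n$ leaves a polynomial additive term that cannot be absorbed for large $z$. This tension is exactly why Merlev\`ede, Peligrad and Rio do not use blocking plus coupling: their proof rests on a multi-scale, Cantor-like decomposition of the index set with a recursive decorrelation of Laplace transforms (together with Rio-type covariance/variance bounds), and the $\ln n \ln\ln n$ factor is produced by the number of scales in that recursion, not by tuning a single gap length. So your sketch, as calibrated, would not close; if you intend to ``reproduce the MPR argument'' you would in fact have to abandon this decomposition and follow their construction, or simply cite the result as the paper does.
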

        \begin{lemma}
            \label{lem:ineq_gc}
            Under the assumptions of Lemma \ref{lem:merlevede}, choose a $c_2 \in (0,\infty)$, and let $z:= y-n^{-c_2}$ for any $y \geq n^{-c_2}$. Then there is a constant $c_1 \geq 0$ such that
            \begin{equation*}
                \mathbb{P}\left\{ \underset{x \in \mathbb{R}}{\sup} \,\left| \hat{F}_n(x) - F(x)\right| \geq y \right\} \leq 2\exp\left\{ - \frac{c_1 n z^2}{1+z \ln n (\ln \ln n)} + c_2 \ln(n) \right\}
            \end{equation*}
    \end{lemma}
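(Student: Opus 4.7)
The plan is the classical quantile-grid discretization of the empirical process combined with the Bernstein-type concentration of Lemma \ref{lem:merlevede} applied pointwise. First I would fix a grid size $N := \lceil n^{c_2} \rceil$ and choose grid points $-\infty = x_0 < x_1 < \dots < x_N = +\infty$ so that $F(x_k^-) - F(x_{k-1}) \leq 1/N$ for every $k$; the canonical choice is $x_k = F^\leftarrow(k/N)$ in terms of the generalised inverse. If $F$ has atoms, one absorbs the jumps by working with one-sided limits, which does not affect the subsequent argument because both $\hat{F}_n$ and $F$ are non-decreasing and right-continuous.

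The second step exploits monotonicity: for any $x \in [x_{k-1}, x_k)$,
\begin{equation*}
\hat{F}_n(x_{k-1}) - F(x_{k-1}) - \tfrac{1}{N} \; \leq \; \hat{F}_n(x) - F(x) \; \leq \; \hat{F}_n(x_k^-) - F(x_k^-) + \tfrac{1}{N},
\end{equation*}
which yields the deterministic bound
\begin{equation*}
\underset{x \in \mathbb{R}}{\sup}\,\left|\hat{F}_n(x) - F(x)\right| \; \leq \; \underset{0 \leq k \leq N}{\max}\,\left|\hat{F}_n(x_k) - F(x_k)\right| + n^{-c_2}.
\end{equation*}
On the event $\{\sup_x |\hat{F}_n(x) - F(x)| \geq y\}$ with $y \geq n^{-c_2}$, the grid maximum must therefore exceed $z = y - n^{-c_2}$.

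Third, I would bound each pointwise deviation by Lemma \ref{lem:merlevede}. Setting $W_i := \mathds{1}_{\{Y_i \leq x_k\}} - F(x_k)$, this sequence is stationary, mean zero, bounded by $\bar y = 1$, and inherits the exponential strong-mixing coefficients of $(Y_t)$. Applying Lemma \ref{lem:merlevede} yields
\begin{equation*}
\mathbb{P}\left\{\left|\hat{F}_n(x_k) - F(x_k)\right| \geq z\right\} \; \leq \; \exp\left\{-\frac{c_1 n z^2}{1 + z \ln n \, \ln \ln n}\right\}
\end{equation*}
for each $k$. A union bound over the $N+1 \leq 2 n^{c_2}$ grid points, using $2 n^{c_2} = 2 \exp\{c_2 \ln n\}$ to absorb the prefactor into the exponent, gives precisely the claimed inequality.

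Once Lemma \ref{lem:merlevede} is in hand the proof is essentially routine, so I do not anticipate a serious obstacle. The only delicate point is that the calibration $N \asymp n^{c_2}$ is forced by the logarithmic budget in the exponent: taking $c_2$ larger improves the deterministic discretisation error $n^{-c_2}$ at the price of inflating the additive $c_2 \ln n$ term, and this trade-off is exactly the one exposed in the statement by the free parameter $c_2$. Handling possibly discontinuous $F$ through generalised inverses and one-sided limits is the only technical nuisance, but it introduces no extra factor.
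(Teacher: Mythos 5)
Your proposal is correct and follows essentially the same route as the paper: a quantile-grid discretisation of size $\asymp n^{c_2}$, the monotonicity sandwich reducing the supremum to a grid maximum plus $n^{-c_2}$, and a union bound combined with the pointwise application of Lemma \ref{lem:merlevede} to the centred indicators, with the grid cardinality absorbed as the $c_2\ln n$ term. The paper handles possible atoms of $F$ via the pairs of endpoints $x_\ell^L,x_\ell^U$ rather than one-sided limits, but this is the same device in different notation.
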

    \begin{proof}
        Using standard techniques, we replace the supremum by the maximum over a finite number of elements. We then apply Lemma \ref{lem:merlevede}.

        To do so, for fixed but arbitrary $\epsilon > 0$, we construct intervals $[x_\ell^L, x_\ell^U]$ with $\ell = 1,2,\dots,N(\epsilon)$, such that $|F(x) - F(z)| \leq \epsilon$ for $x,z \in [x_\ell^L, x_\ell^U]$. Fix an arbitrary $\epsilon > 0$ and divide interval $[0,1]$ into $N(\epsilon) \geq \epsilon^{-1}$, some positive integer to be chosen later, intervals $[t_{\ell -1}, t_\ell[$ where $0 = t_0 < t_1 < \dots < t_{N(\epsilon)} = 1$ such that $t_{\ell} - t_{\ell -1} \leq \epsilon$. Now, take $N(\epsilon)$ the smallest integer greater than or equal to $\epsilon^{-1}$. Define variables $-\infty \leq x_1^L \leq x_2^L\leq \dots \leq x_{N(\epsilon)}^L = \infty$ as $x_\ell^L := \inf \, \{ x \in \mathbb{R}, F(x) \geq t_{\ell -1} \}$. Similarly define variables $-\infty \leq x_1^U \leq x_2^U \leq \dots \leq x_{N_{(\epsilon)}}^U = \infty$ as $x_\ell^U := \sup \, \{ x \in \mathbb{R} : F(x) \leq t_\ell \}$. This construction has the aforementioned properties. Note that we can have $[x_\ell^L, x_\ell^U]$ equal to a singleton, i.e., $x_\ell^L = x_\ell^U$ if there are discontinuities in $F$ and such that discontinuities are larger than $\epsilon$.

        From the fact that $F(x)$ and $\hat{F}_n(x)$ are monotonically increasing, we have that $F(x_\ell^L) \leq F(x) \leq F(x_\ell^U)$ and $\hat{F}_n(x_\ell^L) \leq \hat{F}_n(x) \leq \hat{F}_n(x_\ell^U)$ for $x \in [x_\ell^L, x_\ell^U]$. Also recall that $\mathbb{E} \hat{F}_n(x) = F(x)$. In consequence
        \begin{align*}
            &\hat{F}_n(x) - F(x) \leq \hat{F}_n(x^{U}_\ell) - F(x^{L}_\ell) \leq \hat{F}_n(x^{U}_\ell) - F(x^{U}_\ell) + \epsilon, \\
            &\hat{F}_n(x) - F(x) \geq \hat{F}_n(x^{L}_\ell) - F(x^{U}_\ell) \geq \hat{F}_n(x^{L}_\ell) - F(x^{L}_\ell) - \epsilon.
        \end{align*}
        using monotonicity and the fact $|F(x_\ell^U) - F(x_\ell^L)| \leq \epsilon$ by construction. And thus for any $x$
        \begin{equation*}
            \hat{F}_n(x^{L}_\ell) - F(x^{L}_\ell) - \epsilon \leq \hat{F}_n(x) - F(x) \leq F_n(x^{U}_\ell) - F(x^{U}_\ell) + \epsilon.
        \end{equation*}
        Hence, for any $x$
        \begin{equation*}
            |\hat{F}_n(x) - F(x)| \leq \underset{\ell \in \{1,\dots,N(\epsilon)-1\} }{\max} \, \max \left\{ |\hat{F}_n(x^{U}_\ell) - F(x^{U}_\ell)|, |\hat{F}_n(x^{L}_\ell) - F(x^{L}_\ell)| \right\} + \epsilon
        \end{equation*}
        Since it holds for any $x$, we obtain
        \begin{equation*}
            \underset{x \in [0,1]}{\sup} \,|\hat{F}_n(x) - F(x)| \leq \underset{\ell \in \{1,\dots,N(\epsilon)-1\} }{\max} \, \max \left\{ |\hat{F}_n(x^{U}_\ell) - F(x^{U}_\ell)|, |\hat{F}_n(x^{L}_\ell) - F(x^{L}_\ell)| \right\} + \epsilon.
        \end{equation*}
        Set $\epsilon = n^{-c_2}$. Using union bound and apply Lemma \ref{lem:merlevede} twice with $Y_i = (1-\mathbb{E}) \mathds{1}_{\{ X_i \leq x\}}$ for arbitrary, but fixed $x$, and $z = y - \epsilon = y - n^{-c_2}$.
    \end{proof}

\end{document}